\tikzset{shorten <>/.style={shorten >=#1,shorten <=#1}}
\tikzset{every picture/.prefix code=\DisableQuotes}
\tikzset{%
scalearrow/.style n args={3}{
  decoration={
    markings,
    mark=at position (1-#1)/2*\pgfdecoratedpathlength
      with {\coordinate (#2);},
    mark=at position (1+#1)/2*\pgfdecoratedpathlength
      with {\coordinate (#3);},
    },
  postaction=decorate,
  }
}
\newcommand{\catname}[1]{\mathbf{#1}}
\renewenvironment{abstract}
 {\small
  \begin{center}
  \bfseries \abstractname\vspace{-.5em}\vspace{0pt}
  \end{center}
  \list{}{%
    \setlength{\leftmargin}{2mm}
    \setlength{\rightmargin}{\leftmargin}%
  }%
  \item\relax}
 {\endlist}
 \let\oldtheorem\newtheorem
\RenewDocumentCommand{\newtheorem}{s m o m O{}}{%
\IfBooleanTF{#1}%
{\oldtheorem{#2}{#4}}%
{\IfNoValueTF{#3}{\oldtheorem{#2}{#4}[#5]}%
{\newaliascnt{#2}{#3}%
\oldtheorem{#2}[#2]{#4}%
\aliascntresetthe{#2}}}}
\newtheorem{theorem}{Theorem}[section]
\newtheorem{proposition}[theorem]{Proposition}
\newtheorem{lemma}[theorem]{Lemma}
\newtheorem{corollary}[theorem]{Corollary}
\newtheorem{conjecture}[theorem]{Conjecture}
\theoremstyle{definition}
\newtheorem{definition}{Definition}[section]
\theoremstyle{remark}
\newtheorem{remark}{Remark}[section]
\newtheorem{example}[remark]{Example}
\title{Internal languages of locally cartesian closed $(\infty,1)$-categories}
\author{El Mehdi Cherradi}
\affil{IRIF - CNRS - Universit\'e Paris Cit\'e \\MINES ParisTech - Universit\'e PSL}
\date{}
\begin{document}

\maketitle

\begin{abstract}
 We establish a DK-equivalence between the relative category of $\pi$-tribes and the relative category of locally cartesian closed quasicategories. From this follows one of the internal languages conjectures: Martin-Löf type theory with dependent sums, intensional identity types, and dependent products satisfying functional extensionality is the internal language of locally cartesian closed $(\infty,1)$-categories.
\end{abstract}

\tableofcontents

\newpage 

\section*{Introduction}
\addcontentsline{toc}{section}{Introduction}

The connection between type theory and category theory has been fruitfully studied at various levels. Indeed, on the one hand, it is possible to rely on either of these theories as a foundational framework for logic and mathematics where, in particular, the other theory can be developed. On the other hand, and in relation with the previous observation, categorical logic identifies the elementary constructions in category theory that correspond to logical constructions such as the usual type constructors of Martin-Löf type theory.

A well-known result that subsumes an important fragment of logic is the equivalence between models of MLTT with dependent sums and products and locally cartesian closed categories, as established by Hofmann in \cite{hofmann1994}. For this reason, one could argue the link between (extensional) type theory and ($1$-)category theory is fairly well-understood.

However, in recent years, there has been an important development of homotopy theory and homotopy-enabled versions of MLTT and category theory, referred to as homotopy type theory (HoTT) and $(\infty,1)$-category theory, respectively. On the logical side, this offers foundational frameworks much betterF adapted to carrying out arguments where notions of sameness other than equality are considered. In particular, synthetic mathematics dealing with homotopy theory is easier to express and reason with in such frameworks, as they provide a language that abstracts away the technical aspects of homotopy theory and ensures inherent compatibility of the logical constructions with homotopy. This is also analogous to how usual type theory facilitates mathematical reasoning compared to arguments carried out in a purely set-theoretic language.

Being a younger research area, the connection between homotopy type theory and $(\infty,1)$-category theory is still, in many respects, conjectural, although some results are well expected based on their extensional/$1$-categorical counterpart, and important progress has been made recently.

This document builds up on the important work of Kapulkin and Szumiło, whose paper \cite{ks2019internal} has paved the way towards an analogue of Hofmann's result within the realm of homotopy, by establishing an equivalence between models of intensional MLTT with dependent sums and $(\infty,1)$-categories with finite limits.

\subsection*{Statement of the conjecture}

This paper's starting point is the collection of results obtained in \cite{ks2019internal} (more precisely, Theorem 9.10) and \cite{kapulkin2015locally} (Theorem 5.3). The $\infty$-categorical localization functor 
\begin{equation}
\label{hoi}
\mathbf{Ho}_\infty : \catname{weCat} \to \catname{QCat}
\end{equation}
which maps every relative category to its underlying quasicategory, can be implemented in several ways, for instance, by applying the simplicial nerve to a fibrant replacement of the hammock localization. 

The internal language conjecture for locally cartesian closed $(\infty,1)$-categories, formulated in \cite{kl2018homotopy}, can be stated as follows:

\begin{conjecture}
\label{ilcj}
 The functor $$\mathbf{Ho}_\infty: \catname{CompCat}_{\Sigma, \Pi_\text{ext}, \text{Id}} \to \catname{QCat}_{lcc}$$
 
 induces a DK-equivalence. 
\end{conjecture}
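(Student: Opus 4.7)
The plan is to factor $\mathbf{Ho}_\infty$ through an intermediate relative category, namely the relative category $\catname{Trib}_\pi$ of $\pi$-tribes mentioned in the abstract:
\[
\catname{CompCat}_{\Sigma, \Pi_\text{ext}, \text{Id}} \longrightarrow \catname{Trib}_\pi \longrightarrow \catname{QCat}_{lcc},
\]
and to prove that each factor is a DK-equivalence separately. This decomposition mirrors the strategy already used by Kapulkin and Szumiło in \cite{ks2019internal}, where the analogous statement without $\Pi_\text{ext}$ is obtained by going through ordinary tribes. The first arrow should be thought of as forgetting the strict split structure coming from a comprehension category and retaining only the weak structure (a class of fibrations closed under the relevant operations up to homotopy), while the second arrow is the object that genuinely requires homotopical work.

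For the first factor, the plan is to adapt the standard rectification arguments of categorical logic: any $\pi$-tribe can be replaced, up to a weak equivalence, by the syntactic category of a comprehension category with the prescribed structure, and conversely any such comprehension category gives rise to a $\pi$-tribe by forgetting coherence. The role of $\Pi_\text{ext}$ is to guarantee that the (non-split) dependent product on a $\pi$-tribe is homotopy invariant, so that weak equivalences on either side are detected the same way. I would expect this to be a fairly direct extension of the treatment of $\Sigma$ and $\text{Id}$ appearing in \cite{ks2019internal}, since functional extensionality is precisely the condition that makes $\Pi$ a homotopical operation.

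For the second factor, the idea is to upgrade the Kapulkin--Szumiło DK-equivalence between tribes and finite-limit quasicategories to the presence of dependent products. Concretely, I would try to show that the functor $\mathbf{Ho}_\infty$ sends $\pi$-structure on a tribe to LCC structure on the associated quasicategory, and, more delicately, that one can construct for every LCC quasicategory $\mathcal{C}$ a $\pi$-tribe $T(\mathcal{C})$ together with a weak equivalence $\mathbf{Ho}_\infty(T(\mathcal{C})) \to \mathcal{C}$. The most natural candidate for $T(\mathcal{C})$ is a suitable fibrant-cofibrant replacement of $\mathcal{C}$ endowed with its canonical class of fibrations, equipped with homotopy right adjoints to pullback functors along these fibrations; functional extensionality on the type-theoretic side matches the fact that these right adjoints in $\mathcal{C}$ are only defined up to equivalence.

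The main obstacle, in my view, will be precisely the construction of the dependent product on the tribe of an LCC quasicategory in a sufficiently coherent, strictly functorial way. In the finite-limit case, the Brown--Heller-style fibrant replacement of Kapulkin--Szumiło already strictifies finite limits up to homotopy, but to strictify the right adjoints to pullback functors one must simultaneously arrange stability under pullback along fibrations and compatibility with $\Sigma$ and $\text{Id}$ up to coherent equivalence. The technical core of the paper is likely the verification that this strictification can be performed while keeping the weak equivalences intact, so that the induced functor on homotopy categories is essentially surjective and fully faithful as required for a DK-equivalence.
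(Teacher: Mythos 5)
Your high-level decomposition $\catname{CompCat}_{\Sigma, \Pi_\text{ext}, \text{Id}} \to \catname{Trb}_\pi \to \catname{QCat}_{lcc}$ matches the paper's, and your treatment of the first factor (Lumsdaine--Warren-style left splitting, the $\eta$-rule and function extensionality ensuring homotopy-invariance of $\Pi$) is essentially what the paper does in \cref{compcat_pitribe} and \cref{dkcomp_trbpi}. You also correctly locate the hard part in the second factor.

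However, your account of the second factor conflates two separate rigidification problems and misses the one that actually requires a new idea. Constructing, for each lcc quasicategory $\mathcal{C}$, a $\pi$-tribe presenting it is the \emph{object-level} rigidification; the paper cites this as already done (\cite{cherradi2022interpreting}, Theorem 2.4), so essential surjectivity is not the bottleneck. The genuine difficulty is at the \emph{morphism level}: a morphism of $\pi$-tribes in $\catname{Trb}_\pi$ preserves dependent products up to isomorphism, whereas an lcc $\infty$-functor between the associated quasicategories preserves them only up to equivalence. The paper isolates this by interposing the category $\catname{Trb}_{\pi,\sim}$ (tribes equivalent to $\pi$-tribes, morphisms preserving $\Pi$ only weakly) between $\catname{Trb}_\pi$ and $\catname{QCat}_{lcc}$, and the technical heart of the argument is showing that this passage loses nothing at the homotopy-category level. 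This requires the span ``rigidification'' of \cref{strf} (any weakly $\Pi$-preserving functor factors, up to homotopy, through a span of strictly $\Pi$-preserving ones, built as an Artin-gluing-type pullback over the path tribe $P\mathcal{S}$), together with a substantial apparatus not anticipated in your proposal: replacing categories of tribes by semi-cubical counterparts ($\catname{scTrb}_\pi^p$, $\catname{scTrb}_{\pi,\sim}$) so that they carry fibration-category structures and Cisinski's approximation criterion can be applied, and a theory of homotopy left Kan extensions between tribes (Section 4) used in \cref{sp} to straighten a weakly $\Pi$-preserving span into a strictly $\Pi$-preserving one. So while your instinct that ``strictification of $\Pi$'' is the crux is right, the paper's decisive move is strictifying the functors rather than (only) the objects, and your proposal as stated would not yield fullness and faithfulness of $\mathbf{Ho}(\catname{Trb}_\pi) \to \mathbf{Ho}(\catname{QCat}_{lcc})$ without that additional ingredient.
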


Our goal, in this document, is to establish the conjecture, which can be rephrased by saying that the internal language of locally cartesian closed quasicategories is a dependent type theory with dependent sums, dependent products that are extensional and (intensional) identity types.
The functor can be written as the composite
 $$\mathbf{Ho}_\infty: \catname{CompCat}_{\Sigma, \Pi_\text{ext}, \text{Id}} \to \mathbf{Trb}_\pi \to \catname{QCat}_{lcc}$$
 where the first component being a DK-equivalence is easier to establish. Therefore, we will mostly be concerned with the comparison between $\pi$-tribes (in the sense of Joyal's notes \cite{joyal2017notes}) and locally cartesian closed quasicategories.

\subsection*{Outline}

Overall, the strategy of the proof is as follows:
\begin{itemize}
 \item Decompose the problem into several steps, introducing an intermediate category between the category of $\pi$-tribes and the category of lcc quasicategories: a category of tribes equivalent to $\pi$-tribes, where the morphisms preserve the dependent product "loosely", that is up to equivalence.
 \item Reduce the problem to that of establishing DK-equivalences between various relative categories that admit a fibration category structure. This is useful to make use of Cisinski's characterization of DK-equivalence between fibration categories as exact functors that induce an equivalence of categories between the underlying homotopy categories (see \cref{cisinski_dke}). This can be thought, in a sense, as a fibrant replacement of these relative categories. We will get a diagram as below,

\[\begin{tikzcd}[ampersand replacement=\&]
	\textcolor{rgb,255:red,204;green,51;blue,51}{\catname{scTrb_\pi^p}} \&\& {\catname{scTrb_\pi}} \&\& {\mathbf{Trb_\pi}} \\
	\\
	\&\& \textcolor{rgb,255:red,204;green,51;blue,51}{\catname{scTrb_{\pi,\sim}}} \&\& {\mathbf{Trb_{\pi,\sim}}} \\
	\\
	\&\& \textcolor{rgb,255:red,204;green,51;blue,51}{\mathbf{scTrb}} \&\& {\mathbf{Trb}}
	\arrow["\sim"{description}, from=1-1, to=1-3]
	\arrow["\sim"{description}, from=1-3, to=1-5]
	\arrow["\sim"{description}, from=1-3, to=3-3]
	\arrow["\sim"{description}, from=1-5, to=3-5]
	\arrow["\sim"{description}, from=3-3, to=3-5]
	\arrow[from=3-3, to=5-3]
	\arrow[from=3-5, to=5-5]
	\arrow["\sim"{description}, from=5-3, to=5-5]
\end{tikzcd}\]
where the categories in red are replacements for the "naturally occurring" categories on the right.
 
 \item Show that any functor between $\pi$-tribes that preserves the dependent product up to equivalence can be "factored" as a span of functors that preserve it up to isomorphism. This is the key idea to establish a DK-equivalence between $\pi$-tribes and "loose" $\pi$-tribes (in the sense above). 
\end{itemize}

The first section studies the tribe $P\mathcal{T}$, for $\mathcal{T}$ a ($\pi$-)tribe, which is a the natural candidate for defining a path object in various categories of tribes. The second section defines several categories of tribes together with appropriate "replacements" in the form of DK-equivalent categories of tribes enjoying the structure of fibration categories. In the third section, we discuss the simple but key idea that enables a rigidification process connecting functors that preserve loosely (i.e, up to weak equivalence) exponentials with functors that preserve it up to isomorphism. Finally, the fourth section uses the previous tools to wrap up and establish the conjecture (\cref{ilcj}). 

\subsection*{Acknowledgement}

The author would like to thank Eric Finster, Jonas Frey, Paul-André Melliès and Carlos Simpson for helpful discussions and feedback regarding this material. Special thanks to Michael Shulman for giving insightful comments, pointing out mistakes and making suggestions on drafts of this document.

In the first version of this document, a full proof of the internal language conjecture (\cref{ilcj}) for locally cartesian closed $(\infty,1)$-categories had already been claimed. This turned out to be incorrect, as a subtle problem related to the definition of the considered category of semi-cubes had been overlooked; this is briefly explained at the beginning of Section 2. This problem was noticed by Evan Cavallo, Christian Sattler and Chaitanya Leena Subramaniam, whom the author would like to thank for sharing their observation.

\section{The canonical path tribe}

The more specific notion of $\pi$-tribe (Definition 3.8.1 in \cite{joyal2017notes}) is central here:

\begin{definition}
 A tribe $\mathcal{T}$ is a $\pi$-tribe if every fibration $p : E \to A$ admits an internal product (or dependent product) $\Pi_f p$ along every fibration $f~:~A \to B$, such that the structure map $\Pi_f p$ (with codomain $B$) is a fibration, and if the induced functor between fibrant slices $\Pi_f : \mathcal{T}(A) \to \mathcal{T}(B)$ preserves anodyne maps.
\end{definition}

Fibration categories provide a reasonable setting for investigating possible variations $H : \mathcal{F}_0 \to \mathcal{F}_1$ of the $\mathbf{Ho}_\infty$ functor defined in \eqref{hoi}. Indeed, Cisinski established in \cite{cisinski2010invariance} the following key result, around which the strategy of our proof revolves: 

\begin{theorem}[Cisinski]
\label{cisinski_dke}
 Given fibration categories $\mathcal{F}_0$ and $\mathcal{F}_1$, as well as an exact functor $H : \mathcal{F}_0 \to \mathcal{F}_1$, the following are equivalent:
 \begin{itemize}
  \item $H$ is a DK-equivalence.
  \item $\mathbf{Ho}(H) : \mathbf{Ho}(\mathcal{F}_0) \to \mathbf{Ho}(\mathcal{F}_1)$ is an equivalence of categories.
  \item $H$ satisfies the following two \textit{approximations properties}:
  \begin{enumerate}
   \item[(AP1)] $H$ reflects weak equivalences.
   \item[(AP2)] For every objects $x_0 \in \mathcal{F}_0$ and $y_1 \in \mathcal{F}_1$, and every morphisms $y_1 \to H(x_0)$ in $\mathcal{F}_1$, there exists a commutative square in $\mathcal{F}_1$,

\[\begin{tikzcd}[ampersand replacement=\&]
	{y_1} \&\& {H(x_0)} \\
	\\
	{y'_1} \&\& {H(y_0)}
	\arrow[from=1-1, to=1-3]
	\arrow["\sim"{description}, from=3-1, to=1-1]
	\arrow["\sim"{description}, from=3-1, to=3-3]
	\arrow["{H(f)}"', from=3-3, to=1-3]
\end{tikzcd}\]
   with $f : y_0 \to x_0$ an arrow in $\mathcal{F}_0$, and where the indicated arrows are weak equivalences.
  \end{enumerate}
 \end{itemize}
\end{theorem}

This direction is supported by Szumilo's key observation in \cite{szumilo2016homotopy} that the category $\mathbf{FibCat}$ of fibration categories and exact functors between them is itself a fibration category, with $\mathbf{W}$ the class of exact functors which are DK-equivalences.
The fact that $\mathbf{FibCat}$ can be endowed with a fibration category structure relies on an appropriate definition of the class $\mathbf{F}$ of fibrations and a clever construction of the fibration category $P\mathcal{F}$ defining the path object associated with a general fibration category $\mathcal{F}$.
Given two objects $x$ and $y$ in $\mathcal{F}$, $x$ and $y$ are equivalent, that is connected by a zig-zag of weak equivalences, if and only if they are connected by a zig-zag of length two $\bullet \leftarrow \bullet \rightarrow \bullet$. Hence, a natural intuition is that the subcategory $Q\mathcal{F}$ of  $\mathcal{F}^{\bullet \leftarrow \bullet \rightarrow \bullet}$ formed by those spans where both legs are weak equivalences captures the notion of homotopy inside the fibration category $\mathcal{F}$, and is thus not far from defining a path object for $\mathcal{F}$.

The construction of the path tribe $P\mathcal{F}$ is a key component of the proof, in \cite{szumilo2016homotopy}, that $\mathbf{FibCat}$ enjoys a fibration category structure.
The category $P\mathcal{F}$ of Reedy fibrant diagrams in $Q\mathcal{F}$ with the pointwise notion of weak equivalences has a fibration category structure, which does not provide a path object for $\mathcal{F}$ in general due to the lack of a "reflexivity" exact functor $\mathcal{F} \to P\mathcal{F}$.
In \cite{szumilo2016homotopy}, this hindrance is overcome by introducing a slight variation  on the Reedy structure $P\mathcal{F}$ such that there exists a reflexivity functor $\mathcal{F} \to P\mathcal{F}$ that, together with the two projections $\pi_0, \pi_1 : P\mathcal{F} \to \mathcal{F}$, provides a factorization of the diagonal $\Delta_f : \mathcal{F} \to \mathcal{F} \times \mathcal{F}$.
The reflexivity functor $\mathcal{F} \to P\mathcal{F}$ is provided, in fact, by the diagonal mapping $x \mapsto x \leftarrow x \rightarrow x$ taking $x$ to the constant span.

There is a similar problem arising when considering the category $\mathbf{Trb}$ of tribes and morphisms of tribes (exact functors that preserve anodyne maps). Another way to work around this is to restrict $\mathbf{Trb}$ to those tribes that admit a functorial path object construction $x \mapsto \overline{x}$. The functorial path object, with its two projections $p_i : \overline{x} \to x$, is then used to define a functor $\mathcal{T} \to P\mathcal{T}$ between tribes, where $P\mathcal{T}$ is defined as the category of Reedy fibrant spans whose legs are weak equivalences. Note that $P\mathcal{T}$ is equipped with the more standard notion of Reedy fibration (rather than Szumilo's variation) thanks to the fact that we consider the mapping $x \mapsto \overline{x}$. Every (semi-)simplicial tribe comes equipped with such a functorial construction, given by cotensoring $x$ with $\Delta^1$. Supplying a construction to replace a tribe $\mathcal{T}$ by a semi-simplicial DK-equivalence tribe $\mathbf{\mathcal{T}}$ enables one to switch from the category $\mathbf{Trb}$ to the category $\mathbf{scTrb}$ of semi-cubical tribes and semi-cubical exact functors, and its "canonical" fibration category structure. This is the approach followed in \cite{ks2019internal}.

We seek to use a similar approach, where the construction $\mathcal{T} \mapsto P\mathcal{T}$ plays a central role. We begin by recalling the definition from Section 4 of \cite{ks2019internal}.

A homotopical category is a category $C$ together with a class of maps $W$, called the weak equivalences. A homotopical functor between homotopical categories is a functor that sends weak equivalences to weak equivalences.

\begin{definition}
\label{kscpo}
 We define $\mathbf{Sp}_w$ to be the "homotopical span" category, that is the following category
 $$\bullet \stackrel{\sim}{\leftarrow} \bullet \stackrel{\sim}{\rightarrow} \bullet$$
 where both maps are weak equivalences. $\mathbf{Sp}_w$ admits a Reedy category structure (which is an inverse one): the apex has degree $1$, and the two other objects have degree $0$.
 
 For $\mathcal{T}$ a tribe, we write $P\mathcal{T}$ for the category of Reedy fibrant diagrams from $\mathbf{Sp}_w$ to $\mathcal{T}$.
\end{definition}
This category of diagrams also inherits a tribe structure. An elementary way to prove this is to rely on an alternative definition of $P\mathcal{T}$ based on the construction $\mathcal{T}^{(1)}$ and  $\mathcal{T}^{(\wedge)}$ from Section 1.8 of \cite{joyal2017notes}.

\begin{definition}
\label{cpo0}
 Let $\mathcal{T}$ be a tribe. Define $\mathcal{T}^{(1)}$ to be the full subcategory of the arrow category $\mathcal{T}^{\rightarrow}$ whose objects are the fibrations. Equipped with the class of Reedy fibrant squares as its notion of fibrations, $\mathcal{T}^{(1)}$ enjoys a tribe structure.
 
 The category $\mathcal{T}^{(\wedge)}$ is defined by the following pullback square in $\catname{Cat}$:

\[\begin{tikzcd}
	{\mathcal{T}^{(\wedge)}} && {\mathcal{T}^{(1)}} \\
	\\
	{\mathcal{T} \times \mathcal{T}} && {\mathcal{T}}
	\arrow["C"', from=3-1, to=3-3]
	\arrow[two heads, from=1-1, to=3-1]
	\arrow["{\partial_1}", two heads, from=1-3, to=3-3]
	\arrow[from=1-1, to=1-3]
	\arrow["\lrcorner"{anchor=center, pos=0.125}, draw=none, from=1-1, to=3-3]
\end{tikzcd}\]
where $C : \mathcal{T} \times \mathcal{T} \to \mathcal{T}$ is the cartesian product functor and $\mathcal{T}^{(1)}$ is the tribe of fibrations as introduced in Section 1.7 of \cite{joyal2017notes}.
\end{definition}

Note that this pullback in $\mathbf{Cat}$ inherits a canonical clan structure, by Lemma 1.4.8 in \cite{joyal2017notes}, which can be promoted to a tribe structure. Moreover, it also yields a pullback square in $\catname{Trb}$. 

\begin{remark}
 The anodyne maps in $\mathcal{T}^{(1)}$ are the pointwise anodyne morphisms, and the anodyne maps in $\mathcal{T}^{(\wedge)}$ are the component-wise anodyne morphisms (with respect to the defining pullback in the previous definition). 
\end{remark}

\begin{lemma}
\label{cpo}
 The category $P\mathcal{T}$ coincides with the sub-tribe of $\mathcal{T}^{(\wedge)}$ whose objects consist of spans $x \to y \times z$ such that $x \to y$ and $x \to z$ are trivial fibrations. 
\end{lemma}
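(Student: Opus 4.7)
The plan is to prove the lemma by carefully unfolding both sides and checking that they describe the same data, using the inverse Reedy structure on $\mathbf{Sp}_w$ together with the pullback definition of $\mathcal{T}^{(\wedge)}$.

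First, I would spell out what a Reedy fibrant (and homotopical) diagram from $\mathbf{Sp}_w$ to $\mathcal{T}$ is. Since $\mathbf{Sp}_w$ is an inverse Reedy category where the two legs have degree $0$ and the apex has degree $1$, the only nontrivial matching object is at the apex: it is the product $y \times z$ of the images of the two legs. Reedy fibrancy therefore amounts to the single requirement that the canonical comparison map $x \to y \times z$ be a fibration in $\mathcal{T}$; at the two legs the matching object is terminal, giving no further condition. Requiring the diagram to be homotopical (i.e.\ to send the marked weak equivalences of $\mathbf{Sp}_w$ to weak equivalences in $\mathcal{T}$) says in addition that the two projections $x \to y$ and $x \to z$ are weak equivalences. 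Composing the fibration $x \to y \times z$ with the projections (which are fibrations since $y$ and $z$ are fibrant) shows $x \to y$ and $x \to z$ are fibrations; together with being weak equivalences, they are trivial fibrations. Conversely, if $x \to y$ and $x \to z$ are trivial fibrations, then the induced map $x \to y \times z$ is a fibration (composition of $x \to y$ with the pullback of $x \to z$ along the terminal map of $y$).

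Next, I would match these objects with those of the putative sub-tribe of $\mathcal{T}^{(\wedge)}$. By \cref{cpo0}, an object of $\mathcal{T}^{(\wedge)}$ is the data of a pair $(y,z) \in \mathcal{T} \times \mathcal{T}$ together with an object of $\mathcal{T}^{(1)}$ lying over $C(y,z) = y \times z$, i.e.\ a fibration $x \to y \times z$. Restricting to those where the two projection composites $x \to y$ and $x \to z$ are trivial fibrations gives exactly the objects of $P\mathcal{T}$ described above. The morphism-level comparison is equally direct: a natural transformation of diagrams $\mathbf{Sp}_w \to \mathcal{T}$ is a triple of maps $y \to y'$, $z \to z'$ and $x \to x'$ making the two legs commute, which is exactly the data of a morphism in the pullback category $\mathcal{T}^{(\wedge)}$, since the commutative square in $\mathcal{T}^{(1)}$ is forced to factor through the product map $y \times z \to y' \times z'$.

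Finally, I would verify that the induced tribe structures agree. A map in $P\mathcal{T}$ is a Reedy fibration exactly when the map at degree $0$ (here the map $y \times z \to y' \times z'$) is a fibration and the relative matching map at the apex (the map $x \to x' \times_{y' \times z'} (y \times z)$) is a fibration; this is exactly the definition of a Reedy fibrant square in $\mathcal{T}^{(1)}$ together with componentwise fibrancy in $\mathcal{T} \times \mathcal{T}$, which is the fibration structure on $\mathcal{T}^{(\wedge)}$ inherited from \cref{cpo0}. Dually, the anodyne maps match by the preceding remark. I do not expect a real obstacle here: the whole proof is a bookkeeping unwinding of the Reedy and pullback definitions, and the only substantive point is the observation that, in a tribe, fibration plus weak equivalence is a trivial fibration, so that Reedy fibrancy combined with homotopicity collapses into the single condition on the legs stated in the lemma.
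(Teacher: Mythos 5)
Your approach is essentially the same as the paper's: unwind the Reedy structure on $\mathbf{Sp}_w$ to identify Reedy fibrancy with the gap map $x \to y\times z$ being a fibration (the object condition for $\mathcal{T}^{(\wedge)}$), identify homotopicity with the legs being weak equivalences, observe that composing the gap fibration with the product projections makes the legs into fibrations (hence trivial fibrations), and check that the morphism and tribe structures match on both sides.

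There is, however, one false step. You assert that if $x \to y$ and $x \to z$ are trivial fibrations, then the induced map $x \to y \times z$ is automatically a fibration. This is not true in a general tribe: taking $y = z = x$ with both legs the identity produces the diagonal $x \to x \times x$, which is typically not a fibration (for a Kan complex modelling $S^1$, the diagonal cannot be a Kan fibration because its strict fiber over a point is a singleton while the homotopy fiber is $\Omega S^1$). The parenthetical justification you give (``composition of $x \to y$ with the pullback of $x \to z$ along the terminal map of $y$'') does not actually produce the map $x \to y\times z$. Fortunately this step is superfluous: the lemma describes a sub-tribe of $\mathcal{T}^{(\wedge)}$, so the objects in question already come with $x \to y\times z$ a fibration as part of the hypothesis, and you do not need to deduce it from the legs. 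Deleting the ``converse'' sentence leaves a correct proof, and one that matches the paper's.
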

\begin{proof}
 The Reedy fibrancy condition on the diagrams corresponds to the definition of $\mathcal{T}^{(\wedge)}$ (i.e., diagrams $y \leftarrow x \rightarrow z$ such that $x \to y \times z$ is a fibration). The subcategory considered is the one corresponding to those diagrams that are homotopical (i.e both $x \to y$ and $x \to z$ are weak equivalences).
\end{proof}

\begin{proposition}
\label{fonctp}
 The construction $\mathcal{T} \mapsto P\mathcal{T}$ defines a limit preserving endofunctor of $\mathbf{Trb}$.
 
\end{proposition}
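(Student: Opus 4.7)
My plan is to leverage the presentation of $P\mathcal{T}$ from Lemma~\ref{cpo} as a full sub-tribe of the pullback $\mathcal{T}^{(\wedge)}$ cut out by the component-wise condition that both legs of the span be trivial fibrations. Each ingredient of this presentation — the arrow category, restriction to fibrations, products, pullback, and the final sub-tribe condition — is visibly functorial and limit-preserving, and composing them yields the proposition.

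For functoriality, a morphism of tribes $F : \mathcal{T} \to \mathcal{T}'$ induces $F^{(1)} : \mathcal{T}^{(1)} \to \mathcal{T}'^{(1)}$ by postcomposition, since $F$ preserves fibrations and Reedy-fibrant squares (the latter by preservation of fibrations and of pullbacks along fibrations). Pairing this with $F \times F$ and appealing to the universal property of the pullback defining $\mathcal{T}^{(\wedge)}$ in $\catname{Trb}$ yields $F^{(\wedge)}$. Because $F$ preserves fibrations and anodyne maps, it preserves trivial fibrations, so $F^{(\wedge)}$ restricts to a functor $PF : P\mathcal{T} \to P\mathcal{T}'$ by the characterization in Lemma~\ref{cpo}. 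That $PF$ is itself a morphism of tribes is immediate from the component-wise description of fibrations and anodyne maps in $P\mathcal{T}$ recalled in the remark after Definition~\ref{cpo0}.

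For limit preservation, I rely on the fact that limits in $\catname{Trb}$ are created by the forgetful functor to $\catname{Cat}$, with fibrations and anodyne maps in the limit being exactly the component-wise ones. Each step in building $P\mathcal{T}$ then commutes with limits: the arrow category $(-)^{\to}$ is a right adjoint (cotensor with the walking arrow) and restriction to fibrations is a component-wise condition, so $\mathcal{T} \mapsto \mathcal{T}^{(1)}$ preserves limits; products and pullbacks are themselves limits and so commute with other limits in $\catname{Trb}$; and the sub-tribe condition singling out $P\mathcal{T}$ inside $\mathcal{T}^{(\wedge)}$ — triviality of the two legs — is again component-wise, hence stable under component-wise limits. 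The principal delicate point is the justification that limits in $\catname{Trb}$ are really computed this way and satisfy the tribe axioms with the component-wise class structure; once that is granted, both halves of the proposition reduce to routine diagrammatic verification.
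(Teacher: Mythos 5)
Your proposal follows essentially the same route as the paper: both decompose the construction through $\mathcal{T}^{(1)}$ and $\mathcal{T}^{(\wedge)}$ (with $P\mathcal{T}$ cut out by the component-wise "trivial fibration legs" condition) and both reduce limit-preservation to the fact that limits in $\mathbf{Trb}$ — in practice, terminal objects, products, and pullbacks along isofibrations — are computed in $\mathbf{Cat}$ with the component-wise tribe structure. You supply the functoriality discussion explicitly, which the paper leaves implicit, and you correctly flag as the one delicate point exactly the fact (component-wise limits in $\mathbf{Trb}$) that the paper also invokes without further justification; this is fine, though you should perhaps restrict your sweeping claim from "limits in $\mathbf{Trb}$" to the limits that $\mathbf{Trb}$ actually admits.
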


\begin{proof}
 First note that $\mathcal{T} \mapsto \mathcal{T}^{(1)}$ defines a limit preserving functor. Indeed the arrow-category mapping $\mathcal{T} \mapsto \mathcal{T} ^{\rightarrow}$ is such a functor. Moreover, taking the fibrations in a pullback square along an isofibration (in $\mathbf{Cat}$) to be component-wise

\[\begin{tikzcd}
	{\mathcal{T}} && {\mathcal{T}_1} \\
	\\
	{\mathcal{T}_2} && {\mathcal{T}_0}
	\arrow[from=3-1, to=3-3]
	\arrow[from=1-3, to=3-3]
	\arrow[from=1-1, to=3-1]
	\arrow[from=1-1, to=1-3]
	\arrow["\lrcorner"{anchor=center, pos=0.125}, draw=none, from=1-1, to=3-3]
\end{tikzcd}\]
 turns $\mathcal{T}$ into a tribe, and makes the defining pullback a pullback in $\mathbf{Trb}$.

 By definition of $\mathcal{T}^{(\wedge)}$ as a pullback, it follows directly that $\mathcal{T} \mapsto \mathcal{T}^{(\wedge)}$ defines a limit preserving functor. Since, in the previous pullback square, a fibration $x \to y \times z$ in $\mathcal{T}$ has components $x \to y$ and $x \to z$ trivial fibrations if and only if the projected spans $x_1 \to y_1 \times z_1$ and $x_2 \to y_2 \times z_2$ have this property (i.e, the inherited notion of weak equivalences in $\mathcal{T}$ being component-wise), we indeed have a limit preserving functor $\mathcal{T} \mapsto P\mathcal{T}$.
 \end{proof}

 We would like to establish the following key proposition.

\begin{proposition}
\label{piP}
 If $\mathcal{T}$ is a $\pi$-tribe, then so is $P\mathcal{T}$, and the projections $p_0, p_1 : P\mathcal{T} \to \mathcal{T}$ are $\pi$-closed. 
\end{proposition}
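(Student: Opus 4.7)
The plan is to use the identification of $P\mathcal{T}$ from \cref{cpo} as the full subtribe of $\mathcal{T}^{(\wedge)}$ on those spans whose legs are trivial fibrations. The argument then splits into two parts:
\begin{enumerate}
\item $\mathcal{T}^{(\wedge)}$ is a $\pi$-tribe with $\pi$-closed leg projections, by constructing the dependent product Reedy-inductively along the span shape (viewed as an inverse Reedy category with the apex at degree $1$).
\item The full subtribe $P\mathcal{T}$ is closed under this dependent product.
\end{enumerate}
The $\pi$-closedness of $p_0, p_1 : P\mathcal{T} \to \mathcal{T}$ is then immediate, since they are restrictions of the leg projections of $\mathcal{T}^{(\wedge)}$.

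For (i), given Reedy fibrations $F : X \to Y$ and $G : Y \to Z$ in $\mathcal{T}^{(\wedge)}$, with $X = (x_L \leftarrow x_M \to x_R)$ and similarly for $Y, Z$, I would first construct the legs of $W := \Pi_G F$ componentwise: $w_L := \Pi_{g_L}(x_L)$ in $\mathcal{T}/z_L$ and $w_R := \Pi_{g_R}(x_R)$ in $\mathcal{T}/z_R$, using the $\pi$-structure of $\mathcal{T}$. For the apex, use the canonical counit $(y_L \times y_R) \times_{z_L \times z_R} (w_L \times w_R) \to x_L \times x_R$ (over $y_L \times y_R$) to pull back the Reedy fibration $x_M \to y_M \times_{y_L \times y_R} (x_L \times x_R)$ to a fibration $\tilde{x}_M \to y_M \times_{z_L \times z_R} (w_L \times w_R)$; then define $w_M$ as the dependent product of $\tilde{x}_M$ along the fibration $y_M \times_{z_L \times z_R} (w_L \times w_R) \to z_M \times_{z_L \times z_R} (w_L \times w_R)$, using the $\pi$-structure of $\mathcal{T}$ again. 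The universal property of $W$ in $\mathcal{T}^{(\wedge)}$ will follow by unpacking the nested adjunctions, and preservation of anodyne maps by $\Pi_G$ follows from the $\pi$-tribe axiom on $\mathcal{T}$ applied at each step, together with the stability of anodyne maps under pullback along fibrations.

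The main obstacle is (ii). Assuming $X, Y, Z \in P\mathcal{T}$, one must verify that the apex-to-leg maps $w_M \to w_L$ and $w_M \to w_R$ of $W = \Pi_G F$ are trivial fibrations. Unpacking the construction from (i), these maps can be expressed as dependent products, in suitable fibered slices of $\mathcal{T}$, of the apex-to-leg maps of $X$ relative to those of $Y$ and $Z$; the conclusion then reduces to the fact that in a $\pi$-tribe, $\Pi_f$ preserves trivial fibrations. This last fact is a standard consequence of the $\pi$-tribe axioms: $\Pi_f$ preserves fibrations (transferred across the adjunction $f^* \dashv \Pi_f$ using that $f^*$ preserves anodyne maps) and preserves weak equivalences (by 2-out-of-3 applied to factorizations as anodyne followed by trivial fibration, combined with anodyne preservation given by the $\pi$-tribe axiom).
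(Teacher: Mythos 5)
Your approach is genuinely different from the paper's: the paper deduces the statement from its \cref{piR}, which translates to the language of categories with attributes and cites Proposition~5.13 of Kapulkin--Lumsdaine, whereas you attempt a direct hands-on construction of the dependent product inside $\mathcal{T}^{(\wedge)}$ via \cref{cpo}. The construction in your part~(i) is the standard Reedy-inductive formula, and your derivation of $w_M := \Pi_\pi(\tilde{x}_M)$ over $z_M \times_{z_L \times z_R}(w_L \times w_R)$ is correct. The difficulty, as you note, is entirely in part~(ii), and this is precisely where your argument is too coarse.

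The claim that ``the apex-to-leg maps [of $W$] can be expressed as dependent products, in suitable fibered slices of $\mathcal{T}$, of the apex-to-leg maps of $X$,'' so that closure under $P\mathcal{T}$ reduces to $\Pi_f$ preserving trivial fibrations, does not hold in the form stated. The natural candidate factorization of $w_M \to w_L$ is through $Q := z_M \times_{z_L \times z_R}(w_L \times w_R)$, but neither factor is a trivial fibration: $w_M \to Q$ is the structure map of $\Pi_\pi(\tilde{x}_M)$ and is trivial precisely when the relative Reedy gap $x_M \to y_M \times_{y_L \times y_R}(x_L \times x_R)$ is trivial, and for a genuine span shape (unlike the walking arrow, where the analogous gap is trivial by $2$-out-of-$3$) this gap is \emph{not} a trivial fibration --- already for Kan complexes, with $Y = (\ast \leftarrow \ast \to \ast)$ and $X = (A \leftarrow PA \to A)$ it is the pullback-hom of $\partial\Delta^1 \hookrightarrow \Delta^1$ against $A \to \ast$, i.e.\ $A^{\Delta^1} \to A \times A$. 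Similarly $Q \to w_L$ is obtained by pulling back the non-trivial fibration $w_R \to z_R$. The composite $w_M \to w_L$ \emph{is} a trivial fibration, but this comes from a nontrivial interaction between the $M$-to-$L$ and $M$-to-$R$ constraints (in the concrete model it is a pullback-hom against $\{0\} \hookrightarrow \Delta^1$ rather than $\partial\Delta^1 \hookrightarrow \Delta^1$, and identifying this correctly is exactly what the careful Reedy $\Pi$-type analysis, and the use of function extensionality, achieves in Kapulkin--Lumsdaine). You would need to actually produce such a decomposition, not just assert that one exists.

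A smaller point: your justification that $\Pi_f$ preserves trivial fibrations is circular as written --- you propose to prove weak-equivalence preservation by factoring a weak equivalence as anodyne followed by trivial fibration and then invoking $2$-out-of-$3$, which presupposes that $\Pi_f$ of the trivial fibration factor is a weak equivalence. The fact is true, but the correct argument goes through the characterization of anodyne maps as strong deformation retracts: a trivial fibration $p$ admits an anodyne section $s$ (with a fibrewise homotopy $sp \sim \mathrm{id}$), $\Pi_f$ preserves both $s$ (being anodyne) and the homotopy (since $\Pi_f$ applied to a path object of $X$ gives a path object of $\Pi_f X$), so $\Pi_f p$ is a fibration with an anodyne section and hence a trivial fibration. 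Without this, the reduction in part~(ii), even if the claimed decomposition were available, would not close the argument.
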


For this purpose, we use \cref{piR}, which is a reformulation of a more general result in \cite{kapulkin2021homotopical}. Recall that a homotopical inverse category is an inverse category with a class of weak equivalences containing all identity morphisms and enjoying the $2$-out-of-$6$ property. A diagram $I \to \mathcal{T}$, whose shape is a homotopical inverse category $I$, is a functor $D : I \to \mathcal{T}$ mapping weak equivalences to weak equivalences. In Proposition 5.13 of \cite{kapulkin2021homotopical}, Kapulkin and Lumsdaine prove that for a category with attributes $\mathbf{C}$ admitting $Id$-type and extensional $\Pi$-type (satisfying moreover the $\eta$-rule), the category $\mathbf{C}^I$ of homotopical diagrams, with the notion of Reedy types they define, is a category with attributes admitting the same logical structure. We would like to make use of this result in the context of tribes. To do so, we can either rephrase the given proof in the language of tribes or use the connection between tribes and categories with attributes to transfer their result. We choose to use this second method. Hence, we need \cref{compcat_pitribe} below.

 First, let $T : \mathbf{CompCat}_{\Sigma, id} \to \mathbf{Trb}$ and $C : \mathbf{Trb} \to \mathbf{CompCat}_{\Sigma, id}$ be the functors defined in Section 9 of \cite{ks2019internal}. Explicitly, recall that for $\mathbf{C}$ an object in $\mathbf{CompCat}_{\Sigma, id}$, the tribe $T \mathbf{C}$ has the same underlying category and the notion of fibration given as (finite) composites of context projections. For $\mathcal{T}$ a tribe, $C \mathcal{T}$ is a (full) comprehension category constructed from the codomain projection $\mathcal{T}^{\rightarrow_\mathbf{fib}} \to \mathcal{T}$, where $\mathcal{T}^{\rightarrow_\mathbf{fib}}$ is the full subcategory of $\mathcal{T}^{\rightarrow}$ spanned by the fibrations.

\begin{lemma}
\label{compcat_pitribe}
With the notations above, if $\mathcal{T}$ is a $\pi$-tribe, then $C \mathcal{T}$ admits $\Pi$-types satisfying the $\eta$-rule and with function extensionality $\mathbf{ext} : \Pi_f P X \to P\Pi_f X$ mapping the reflexive homotopy to the reflexive path (i.e. in the image of $\iota : \Pi_f X \to P\Pi_f X$). Conversely, if $\mathbf{C}$ admits such $\Pi$-types, then $T \mathbf{C}$ is a $\pi$-tribe.
\end{lemma}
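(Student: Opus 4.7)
The plan is to unwind the definitions of $T$ and $C$ from Section 9 of \cite{ks2019internal} and transport the dependent product structure across the correspondence, matching the three clauses in the definition of a $\pi$-tribe (existence of $\Pi_f p$, its structure map being a fibration, and $\Pi_f$ preserving anodyne maps) with the three type-theoretic ingredients ($\Pi$-formation together with strict $\beta$, the $\eta$-rule, and function extensionality, respectively).

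For the forward direction, given a $\pi$-tribe $\mathcal{T}$, I would use that a type in $C\mathcal{T}$ over a context $\Gamma$ corresponds to a fibration $p : E \to \Gamma$, and a dependent type over $\Gamma.E$ to a further fibration $q : F \to E$. The internal product $\Pi_p q$ supplied by the $\pi$-tribe structure supplies the $\Pi$-type; the requirement that its structure map be a fibration ensures $\Pi_p q$ is again a well-formed type of $C\mathcal{T}$; the strict $\beta\eta$-rules follow from $\Pi_p$ being a genuine right adjoint to pullback along $p$. For function extensionality, the anodyne-preservation clause implies that when two parallel terms of $\Pi_p q$ become pointwise propositionally equal, the fiberwise anodyne map encoding that equality is sent by $\Pi_p$ to an anodyne morphism whose lifting property against the identity-type fibration over $\Pi_p q \times \Pi_p q$ produces the required identity term between the two functions.

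For the converse, given $\mathbf{C}$ admitting $\Pi$-types with $\eta$ and funext, the $\Pi$-type constructor directly furnishes, for every composable pair of fibrations in $T \mathbf{C}$, an object $\Pi_p q$ whose structure map is a context projection, hence a fibration of $T \mathbf{C}$ by the very definition of the fibrations in the tribe $T \mathbf{C}$. Strict right-adjointness of $\Pi_p$ to pullback comes from $\beta\eta$, and preservation of anodyne maps by $\Pi_p$ follows by reversing the argument of the previous paragraph, using funext to build, from a fiberwise propositional equality, an identity term at the level of $\Pi_p$.

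The main obstacle will be the precise equivalence between function extensionality and the anodyne-preservation clause for $\Pi_p$. This rests on the translation, well known in the HoTT literature, between the type-theoretic statement that pointwise propositional equality of functions entails their propositional equality and the homotopical statement that $\Pi_p$ preserves fiberwise weak equivalences. In the tribe setting this has to be refined, via the lifting characterization of anodyne maps, to preservation of anodyne maps; one needs to verify that this refinement is compatible with the anodyne/fibration factorization of a tribe and is faithfully transported by the functors $T$ and $C$, which is where the bulk of the bookkeeping lies.
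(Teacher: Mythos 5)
Your overall plan is on the right track — use the $T/C$ correspondence, let the internal product of the $\pi$-tribe supply the $\Pi$-type and conversely, and reduce the interesting part to the equivalence between function extensionality and the anodyne-preservation clause. Your account of the forward direction (lifting $\Pi_f \iota_B$ against the identity-type fibration on $\Pi_f B$ to get the $\mathbf{ext}$ map) is essentially what the paper does. But there is a genuine gap in the converse direction, which you flag as "bookkeeping" when it is in fact the main idea of the proof.

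You propose to "reverse the argument" and to refine preservation-of-weak-equivalences to preservation-of-anodynes "via the lifting characterization of anodyne maps." That refinement does not go through directly: knowing that $\Pi_f$ preserves weak equivalences, or that funext supplies a comparison $\Pi_f P Y \to P\Pi_f Y$, does not by itself hand you a lift of $\Pi_f u$ against an arbitrary fibration. The paper instead uses the \emph{strong deformation retract} characterization of anodyne maps (Lemmas 4.3.4–4.3.5 in Joyal's notes): an anodyne $u : X \to Y$ comes with a retraction $r : Y \to X$ and a homotopy $h : Y \to PY$ witnessing $u r \simeq \mathrm{id}_Y$. One applies $\Pi_f$ to the whole retract datum, post-composes $\Pi_f h$ with the funext map $\mathbf{ext}_Y : \Pi_f P Y \to P\Pi_f Y$ to land in the path object of $\Pi_f Y$, and separately factors $\Pi_f u$ through $\iota_{\Pi_f X}$ via a lift $P\Pi_f u$. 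Assembling these exhibits $\Pi_f u$ as a strong deformation retract, hence anodyne. Without invoking this characterization, your sketch has no mechanism for turning a funext term into the required lifting property, so the converse direction is not established.

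A smaller inaccuracy: your opening correspondence pairs the $\eta$-rule with "the structure map being a fibration." In the paper the $\eta$-rule is what upgrades the universal property of the internal product from cofreeness with respect to $f^* : \mathcal{T}(A') \to \mathcal{T}(A)$ (fibrant slices) to cofreeness with respect to $f^* : \mathcal{T}_{/A'} \to \mathcal{T}_{/A}$ (full slices); the structure map being a fibration just corresponds to $\Pi_p q$ being a well-formed type. You later state this correctly, but the initial framing is misleading.
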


\begin{proof}
 For the first part, the definition of an internal product in the tribe $\mathcal{T}$ is precisely designed so that its universal property can be used to construct $\Pi$-types, which moreover satisfy the $\eta$-rule. Function extensionality follows from preservation of anodyne maps, so that we have a lift in the diagram below, satisfying the extra assumption on reflexiveness, which is a weak equivalence by $2$-out-of-$3$.

\[\begin{tikzcd}[ampersand replacement=\&]
	{\Pi_f B} \&\& {P\Pi_ fB} \\
	\\
	{\Pi_f PB} \&\& {\Pi_f (B\times_A B)\simeq \Pi_f B \times_A \Pi_f B}
	\arrow["{\iota_{\Pi_fB}}", from=1-1, to=1-3]
	\arrow["{\Pi_f \iota_B}"', from=1-1, to=3-1]
	\arrow[from=1-3, to=3-3]
	\arrow["{\mathbf{ext}}", dashed, from=3-1, to=1-3]
	\arrow["{\Pi_f(<p_0,p_1>)}"', from=3-1, to=3-3]
\end{tikzcd}\]
Here, $f : A \to A'$ is a fibration, $PX$ denotes a path object for $X$, which comes with the reflexivity map $\iota_X : X \to PX$.

For the converse, we can argue as in Lemma 5.5 of \cite{kapulkin2015locally}. Namely, the evaluation map $\epsilon : \Gamma, A, \Pi_f B (\simeq B \times_A \Pi_f B) \to \Gamma,A,B$ comes from the morphism $\mathbf{app}_{A,B}$ supplied by the $\Pi$-type structure. The $\eta$-rule implies the universal property expected from the internal product, i.e. the evaluation being cofree with respect to the functor $f^* : \mathcal{T}_{/A'} \to \mathcal{T}_{/A}$ (and not just the functor between fibrant slices   $f^* : \mathcal{T}(A') \to \mathcal{T}(A)$). Indeed, given a map $u : C \times_\Gamma A \to B$ for some type $\Gamma, C$, we have a section $\lambda : \Gamma, C \to \Gamma, \Pi_f B \times_\Gamma C$ of the projection $\Gamma, \Pi_f B \times_\Gamma C \to  \Gamma, C$, satisfying $\epsilon \circ (\lambda \times_\Gamma A) = u$ by definition of the $\eta$-rule.

We still need to check that the internal product functor $\Pi_f : \mathcal{T}(A) \to \mathcal{T}(A')$ preserves anodyne maps. To see this, we will argue as in Lemmas 4.3.4 and 4.3.5 in \cite{joyal2017notes}, using the characterization of the anodyne maps as the strong deformation retracts. Explicitly, if $u: X \to Y$ is a map in $\mathcal{T}(A)$ which is anodyne, it is a strong deformation retract, so there exists a map $r: Y \to X$ such that $r \circ u = id_X$ and $u \circ r$ is homotopic to $id_Y$. We can take a lift $Pu : PX \to PY$, where $PX$ and $PY$ are the path objects provided by the identity types of the category with attributes $\mathbf{C}$, as in the diagram below on the left, and take a homotopy $h : Y \to PY$ as on the right.

\[\begin{tikzcd}[ampersand replacement=\&]
	X \&\& PX \&\& X \& PX \& PY \\
	Y \\
	PY \&\& {*} \&\& Y \&\& {Y \times_A Y}
	\arrow["{\iota_X}", from=1-1, to=1-3]
	\arrow["u"', from=1-1, to=2-1]
	\arrow[from=1-3, to=3-3]
	\arrow["{\iota_X}", from=1-5, to=1-6]
	\arrow["u"', from=1-5, to=3-5]
	\arrow["Pu", from=1-6, to=1-7]
	\arrow[from=1-7, to=3-7]
	\arrow["{\iota_Y}"', from=2-1, to=3-1]
	\arrow["Pu"', dashed, from=3-1, to=1-3]
	\arrow[from=3-1, to=3-3]
	\arrow["h", dashed, from=3-5, to=1-7]
	\arrow["{(u \circ r, id_Y)}"', from=3-5, to=3-7]
\end{tikzcd}\]

Applying the functor $\Pi_f$, we get a diagram

\[\begin{tikzcd}[ampersand replacement=\&]
	{\Pi_fX} \&\& {\Pi_fPX} \&\& {\Pi_fX} \& {\Pi_fPX} \& {\Pi_fPY} \\
	{\Pi_fY} \\
	{\Pi_fPY} \&\& {*} \&\& {\Pi_fY} \&\& {\Pi_fY \times_A' \Pi_fY}
	\arrow["{\Pi_f\iota_X}", from=1-1, to=1-3]
	\arrow["{\Pi_fu}"', from=1-1, to=2-1]
	\arrow[from=1-3, to=3-3]
	\arrow["{\Pi_f\iota_X}", from=1-5, to=1-6]
	\arrow["{\Pi_fu}"', from=1-5, to=3-5]
	\arrow["{\Pi_fPu}", from=1-6, to=1-7]
	\arrow[from=1-7, to=3-7]
	\arrow["{\Pi_f\iota_Y}"', from=2-1, to=3-1]
	\arrow["{\Pi_fPu}"', dashed, from=3-1, to=1-3]
	\arrow[from=3-1, to=3-3]
	\arrow["{\Pi_fh}", dashed, from=3-5, to=1-7]
	\arrow["{(\Pi_fu \circ \Pi_fr, id_{\Pi_fY})}"', from=3-5, to=3-7]
\end{tikzcd}\]
Now, the function extensionality structure we assumed for the $\Pi$-types in $\mathbf{C}$ provides us with a map $\mathbf{ext}_Y : \Pi_fPY \to P\Pi_fY$ over $\Pi_fY \times_A' \Pi_fY$, where $P\Pi_fY$ is again the path object induced by the identity type on $\Pi_f Y$. Hence, we have the following diagrams:

\[\begin{tikzcd}[ampersand replacement=\&]
	{\Pi_fX} \& {\Pi_fPX} \& {\Pi_fPY} \& {P\Pi_fY} \\
	\\
	{\Pi_fY} \&\& {\Pi_fY \times_A' \Pi_fY}
	\arrow["{\Pi_f\iota_X}", from=1-1, to=1-2]
	\arrow["{\Pi_fu}"', from=1-1, to=3-1]
	\arrow["{\Pi_fPu}", from=1-2, to=1-3]
	\arrow["{\mathbf{ext}_Y}", from=1-3, to=1-4]
	\arrow[from=1-3, to=3-3]
	\arrow[from=1-4, to=3-3]
	\arrow["{\Pi_fh}", from=3-1, to=1-3]
	\arrow["{(\Pi_fu \circ \Pi_fr, id_{\Pi_fY})}"', from=3-1, to=3-3]
\end{tikzcd}\]
where the top composite $\Pi_fX \to P\Pi_fY$ factors through $\iota_{\Pi_fX} : \Pi_fX  \to P\Pi_fX$ by the extra assumption on the extensionality structure $\mathbf{ext}$. This allows us to see that $\Pi_fu$ is a strong deformation retract, hence an anodyne map. This concludes the proof that $T \mathbf{C}$ is a $\pi$-tribe.
\end{proof}

\begin{proposition}
 \label{piR}
 Consider a tribe $\mathcal{T}$, and a homotopical inverse category $I$ where all arrows are weak equivalences. Then the category of Reedy fibrant diagrams $I \to \mathcal{T}$ can be endowed with a $\pi$-tribe structure $\mathcal{T}^I_\mathbf{R}$.
 Moreover: 
 \begin{itemize}
  \item If $\mathcal{T} \to \mathcal{S}$ is a morphism of $\pi$-tribes, then so is the induced morphism  $\mathcal{T}^I_\mathbf{R} \to \mathcal{S}^I_\mathbf{R}$.
  \item If $p : I \to J$ is a discrete opfibration (where we make on $J$ the same assumptions as on $I$), then precomposition by $p$ induces a morphism of $\pi$-tribes $\mathcal{T}^J_\mathbf{R} \to  \mathcal{T}^I_\mathbf{R}$.
 \end{itemize} 
\end{proposition}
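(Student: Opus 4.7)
The plan is to obtain \cref{piR} by transferring the analogous result of Kapulkin--Lumsdaine (Proposition 5.13 in \cite{kapulkin2021homotopical}) from comprehension categories to tribes, using the adjoint-like pair $(T, C)$ together with the just-established \cref{compcat_pitribe}. Concretely, starting from a $\pi$-tribe $\mathcal{T}$, I would first form $C\mathcal{T}$, which by \cref{compcat_pitribe} is a comprehension category with $\Sigma$, $\mathrm{Id}$, and $\Pi$-types satisfying $\eta$ and function extensionality. I would then apply Proposition 5.13 of \cite{kapulkin2021homotopical} to obtain the comprehension category $(C\mathcal{T})^I_{\mathbf{R}}$ of homotopical Reedy-fibrant diagrams carrying the same logical structure. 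Applying $T$ and invoking the converse half of \cref{compcat_pitribe}, I recover a $\pi$-tribe structure. The key identification is then that the underlying category and fibrations of $T((C\mathcal{T})^I_{\mathbf{R}})$ agree with the Reedy-fibrant diagrams $I \to \mathcal{T}$ equipped with pointwise weak equivalences and Reedy fibrations: since both $T$ and $C$ leave the underlying category unchanged and merely reinterpret fibrations as composites of context projections (respectively as the codomain projection of $\mathcal{T}^{\to_\mathbf{fib}} \to \mathcal{T}$), the Reedy fibrations on either side are built from the same matching-map conditions. This justifies defining $\mathcal{T}^I_{\mathbf{R}}$ as this common object.

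For the first functoriality clause, any morphism of $\pi$-tribes $F : \mathcal{T} \to \mathcal{S}$ induces a morphism of comprehension categories $CF : C\mathcal{T} \to C\mathcal{S}$ preserving all logical structure, and the construction of \cite{kapulkin2021homotopical} is itself functorial on such morphisms (postcomposition of diagrams commutes with matching objects and Reedy types), so $(CF)^I_{\mathbf{R}}$ is again a morphism preserving $\Sigma$, $\mathrm{Id}$ and $\Pi_{\mathrm{ext}}$. Applying $T$ yields the desired morphism of $\pi$-tribes $\mathcal{T}^I_{\mathbf{R}} \to \mathcal{S}^I_{\mathbf{R}}$.

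For the second clause, I would argue more directly at the level of tribes, since the point depends on a limit-commutation property that a discrete opfibration enjoys. Given $p : I \to J$ a discrete opfibration between homotopical inverse categories, precomposition clearly preserves homotopicity of diagrams. The crucial observation is that for every $i \in I$, unique lifting of morphisms out of $p(i)$ identifies the matching category at $i$ (in $I$) with the matching category at $p(i)$ (in $J$), which gives a canonical isomorphism $M_i(D \circ p) \cong M_{p(i)}(D)$. Consequently $p^*$ sends Reedy-fibrant diagrams to Reedy-fibrant diagrams and the matching maps are preserved on the nose. Using the explicit matching-object description of dependent products in $\mathcal{T}^I_{\mathbf{R}}$, the same identification shows that $p^*$ commutes with $\Pi$ and preserves anodynes.

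The main obstacle, in my view, is not any single deep argument but the bookkeeping in the identification $T((C\mathcal{T})^I_{\mathbf{R}}) \cong \mathcal{T}^I_{\mathbf{R}}$: one must check carefully that the two notions of Reedy fibration and the two notions of $\Pi$-types produced by the two routes coincide, given that tribes phrase everything in terms of fibrations while comprehension categories phrase it in terms of types and display maps. Once this identification is cleanly made, the propagation of $\pi$-structure and the two functorialities are formal. I would write the proof by first giving the identification as a lemma, then deducing (a), and finally handling functoriality in the two variables separately, the second one by the matching-object computation above.
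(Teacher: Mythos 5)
Your proposal follows essentially the same route as the paper: transfer the Kapulkin--Lumsdaine result along the $T$/$C$ correspondence using \cref{compcat_pitribe}, and use functoriality of that construction for the two variance clauses.

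Two details differ and deserve flagging. First, Proposition 5.13 of \cite{kapulkin2021homotopical} is formulated for \emph{categories with attributes}, not for full comprehension categories, so the paper does not apply it to $C\mathcal{T}$ directly; it first composes with the equivalence $U : \mathbf{CompCat}_{\Sigma,\mathrm{id}} \to \mathbf{CwA}_{\Sigma,\mathrm{id}}$, applies the cited proposition to $UC\mathcal{T}$, and then returns via the inverse $F$, closing the loop with the observation that $T\circ F\circ U\circ C$ is the identity on $\mathbf{Trb}_\pi$. Your tidy identification $T((C\mathcal{T})^I_\mathbf{R}) \cong \mathcal{T}^I_\mathbf{R}$ elides exactly this extra layer of bookkeeping; it is not wrong in spirit, but the hypotheses of the cited proposition force you to pass through CwAs. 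Second, for both functoriality clauses the paper just cites Proposition 5.14 of \cite{kapulkin2021homotopical}, which already handles precomposition along a discrete opfibration as well as postcomposition by a structure-preserving morphism. Your direct matching-category argument is a valid alternative for the Reedy-structure part --- the isomorphism $M_i(D\circ p) \cong M_{p(i)}(D)$ is correct for inverse categories and a discrete opfibration $p$ --- but the assertion that ``the same identification shows $p^*$ commutes with $\Pi$'' would, if unpacked, amount to reproving the cited proposition's inductive construction of Reedy $\Pi$-types; unless you intend to do that, it is cleaner to cite it, as the paper does.
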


\begin{proof}
 The tribe structure on the category $\mathcal{T}^I_\mathbf{R}$ is defined in \cite{ks2019internal} (see Definition 2.21 and Lemma 2.22). Therefore, we only need to check that this tribe is a $\pi$-tribe. By definition of the Reedy structure on both tribes (see Definition 2.21 in \cite{ks2019internal}) and categories with attributes (see Definition 3.22 in \cite{kapulkin2021homotopical}), the tribe $\mathcal{T}^I_\mathbf{R}$, is mapped by the functor $U \circ C : \mathbf{Trb} \to \mathbf{CompCat}_{\Sigma, id} \to \mathbf{CwA}_{\Sigma,id}$ (which is actually an equivalence of categories) to the category with attributes of homotopical (strict) Reedy types $(UC \mathcal{T})^I$. Here, $$U: \mathbf{CompCat}_{\Sigma, id} \to \mathbf{CwA}_{\Sigma,id}$$
 takes a (full) comprehension category $p : \mathbf{E} \to \mathbf{B}$ to a category with attributes by first replacing the Grothendieck fibration $p$ by an equivalent split one $p':\mathbf{E}' \to \mathbf{B}$, and then forgetting about the categorical structure of types above a given context $\Gamma$. Note that, modulo the Grothendieck construction, this last step boils down to post-composing with the object functor $\mathbf{Ob} : \mathbf{Cat} \to \mathbf{Set}$. There is also a functor  $$U: \mathbf{CwA}_{\Sigma,id} \to \mathbf{CompCat}_{\Sigma, id}$$ 
 that takes a category with attributes, thought of as a discrete comprehension category $p : \mathbf{E} \to \mathbf{B}$, to the full comprehension category $p' : \mathbf{E}' \to \mathbf{B}$ obtained by factoring the comprehension functor as a functor bijective on objects followed by a fully faithful functor:

\[\begin{tikzcd}[ampersand replacement=\&]
	{\mathbf{E}} \&\& {\mathbf{E}'} \&\& {\mathbf{B}^\rightarrow} \\
	\\
	\&\& {\mathbf{B}}
	\arrow[from=1-1, to=1-3]
	\arrow["p"', from=1-1, to=3-3]
	\arrow[from=1-3, to=1-5]
	\arrow["{p'}"', from=1-3, to=3-3]
	\arrow["{\mathbf{cod}}", from=1-5, to=3-3]
\end{tikzcd}\]

 Now, by \cref{compcat_pitribe}, $C \mathcal{T}$ supports extensional $\Pi$-types since $\mathcal{T}$ is  a $\pi$-tribe. It is equivalent to say that the category with attributes $UC \mathcal{T}$ supports these $\Pi$-types. Next, Proposition 5.13 of \cite{kapulkin2021homotopical} tells us that $(UC \mathcal{T})^I$ admits extensional $\Pi$-types. We also observe that $T \circ F \circ UC : \mathbf{Trb}_\pi \to \mathbf{Trb}_\pi$ is the identity functor. Therefore, we can conclude that $TF(UC \mathcal{T})^I \simeq TFUC \mathcal{T}^I_\mathbf{R} = \mathcal{T}^I_\mathbf{R}$  is a $\pi$-tribe, again by \cref{compcat_pitribe}.
 
 For the second point, since internal products in the tribe $\mathcal{T}$ correspond to $\Pi$-types in $UC \mathcal{T}$ (and likewise for $\mathcal{S}$), the fact that $\mathcal{T}^I_\mathbf{R} \to \mathcal{S}^I_\mathbf{R}$ is a $\pi$-closed morphism of tribes, provided that $\mathcal{T} \to \mathcal{S}$ is $\pi$-closed, and likewise for $p^* : \mathcal{T}^J_\mathbf{R} \to  \mathcal{T}^I_\mathbf{R}$, follows by preservation of the $\Pi$-types at the level of the corresponding categories with attributes. This is established in Proposition 5.14 of \cite{kapulkin2021homotopical}.

 \end{proof}

\begin{proof}[Proof of \cref{piP}]
 $P\mathcal{T}$ can equally be defined as the category of Reedy fibrant objects in the category of homotopical inverse diagrams on $\mathcal{T}$ of the following shape: $$\bullet \stackrel{\sim}{\leftarrow} \bullet \stackrel{\sim}{\rightarrow} \bullet$$ That the projections are $\pi$-closed follows directly from the construction of the dependent product for $P\mathcal{T}$ given by Proposition 5.13 in \cite{kapulkin2021homotopical}.
 \end{proof}

 \section{Some fibration categories of tribes}
 
 We begin this section by briefly explaining the problem that occurred in the first version of this document. With the notion of cube category considered previously, the category $\catname{scTrb}$ of semi-cubical tribes does not enjoy a fibration category structure in the expected canonical way, contrary to what we intended. Given a semi-cubical tribe $\mathcal{T}$, the factorization of the diagonal $\mathcal{T} \to \mathcal{T} \times \mathcal{T}$ through the projection $P\mathcal{T} \to \mathcal{T} \times \mathcal{T} $ given by
 \begin{align*}
  \iota_\mathcal{T} : & \mathcal{T} \to P\mathcal{T}\\
  & x \mapsto  (x \leftarrow x^{\square^1} \rightarrow x)
 \end{align*}
 is not a morphism in $\catname{scTrb}$, as it need not preserve the cotensors by finite semi-cubical sets. Precisely, the following diagram fails to commute:

\[\begin{tikzcd}[ampersand replacement=\&]
	{\mathcal{T}} \&\& {\mathcal{T}} \\
	\\
	{P\mathcal{T}} \&\& {P\mathcal{T}}
	\arrow["{(-)^K}", from=1-1, to=1-3]
	\arrow["{\iota_\mathcal{T}}"', from=1-1, to=3-1]
	\arrow["\neq"{description}, draw=none, from=1-1, to=3-3]
	\arrow["{\iota_\mathcal{T}}", from=1-3, to=3-3]
	\arrow["{(-)^K}"', from=3-1, to=3-3]
\end{tikzcd}\] 
 Indeed, given a semi-cubical set $K$ and an object $x$ of $\mathcal{T}$, the object $(x^{\square^1})^K \simeq x^{\square^1 \otimes K}$, obtained by applying the lower left composite to $x$ in the diagram above, need not be isomorphic to $(x^K)^{\square^1} \simeq x^{K \otimes \square^1}$, the object obtained by applying the upper right composite to $x$. The two objects are not isomorphic in general because the monoidal product on semi-cubical sets, derived from the monoidal structure of our original choice for the semi-cube category, is not symmetric.
 
 This suggests working with a category of cubes for which the monoidal product on semi-cubical sets is symmetric. This is the case as soon as the underlying category of cubes is symmetric moinoidal. Therefore, the sole category of semi-cubes we consider here is the free symmetric monoidal category $(\square_\sharp^s, \otimes, I^0)$ generated by two face maps $\delta_0, \delta_1 : I^0 \to I^1$ together with an involution of the interval (a reversal) $r : I^1 \to I^1$.
 
 As discussed in the previous section, it will be convenient to replace the category $\mathbf{Trb}$ of tribes and tribe morphisms between them, as well as the category $\mathbf{Trb_\pi}$ of $\pi$-tribes and $\pi$-closed morphisms of tribes between them, by various DK-equivalent subcategories, some of which happen to carry a fibration category structure. Regarding $\pi$-tribes, we will have to consider two types of morphisms: those that preserve the internal product up to isomorphism and those that preserve it only up to weak equivalence.
 
 The following categories, which are variations of $\catname{Trb}$, will be of interest to us.
 
 \begin{definition}
 We consider:
  \begin{itemize}
   \item The category $\catname{scTrb}$ is the category of semi-cubical tribes, as introduced in \cite[Definition 1.1]{cherradi2026semi}, with morphisms the semi-cubical functors that preserve the cotensors by finite semi-cubical presheaves and whose underlying functor is a morphism of tribe.
   \item The category $\catname{scTrb_\pi}$ is the category of semi-cubical tribes, that are moreover $\pi$-tribes, and, between them, the morphisms of semi-cubical tribes that are moreover $\pi$-closed (i.e, the pullback $\catname{scTrb} \times_{\catname{Trb}} \catname{Trb_\pi}$).   
   \item The category $\catname{Trb_{\pi,\sim}}$ is the category whose objects are the tribes that are equivalent to a $\pi$-tribe (in $\catname{Trb}$) and whose morphisms are the tribe morphisms $m : \mathcal{T} \to \mathcal{T}'$ such that $\mathbf{Ho}_\infty(m)$ is a locally cartesian closed $\infty$-functor.
   \item The category $\catname{scTrb_{\pi,\sim}}$ is the semi-cubical counterpart of $\catname{Trb_{\pi,\sim}}$, that is $\catname{scTrb_{\pi,\sim}} := \catname{scTrb} \times_{\catname{Trb}} \catname{Trb_{\pi,\sim}}$.
  \end{itemize}  
\end{definition}

\begin{remark}
 There is a notion of (semi-)simplicial $\pi$-tribe introduced by Joyal in \cite{joyal2017notes}, where the $\pi$-tribe structure plays well with the enrichment in that the universal property of the internal product is required to induce an isomorphism of enriched hom (which are (semi-)simplicial sets) rather than an isomorphism of homsets. A similar condition could be introduced for semi-cubical tribes. For our purpose, we will not have to consider the corresponding notion of semi-cubical $\pi$-tribe, but rather, as in the definition above, the notion of semi-cubical tribes that are also, and separately, $\pi$-tribes (in a non-enriched sense). Indeed, we will have no use for the enriched universal property of the internal product, so sticking to the latter notion will make things simpler.
\end{remark}

Unlike the category $\catname{scTrb}$, the category $\catname{scTrb_\pi}$ does not seem to carry a fibration category structure. This is because the morphism $\mathcal{T} \to P\mathcal{T}$ mapping $x$ to $x \leftarrow x^{\square_\sharp^1} \rightarrow x$, which is used to define the path-object for a semi-cubical tribe $\mathcal{T}$, is not $\pi$-closed in general (when assuming $\mathcal{T}$ to be a $\pi$-tribe, which also implies that $P\mathcal{T}$ is one). The following definition aims at "forcing" the existence of path-objects:

\begin{definition}
We define $\catname{scTrb_\pi^p}$ as the full subcategory of $\catname{scTrb_\pi}$ spanned by those objects $\mathcal{T}$ such that the morphism $\iota_{\mathcal{T}} : \mathcal{T} \to P\mathcal{T}$ supplied by the semi-cubical structure is a $\pi$-closed morphism of tribes.
\end{definition}

\begin{remark}
 Although the previous definition may appear ad hoc, it is reminiscent of the principles stipulated by higher observational type theory (see \cite{nlab:higher_observational_type_theory}), introduced by Altenkirch, Kaposi and Shulman. Indeed, in this framework, the identity type associated with a dependent function type $\Pi_{x :A} B(x)$ is defined by:
 
 $$ f =_{\Pi_{x :A} B(x)} g \equiv \Pi_{a : A} \Pi_{b : A} \Pi_{p : a =_A b} (f(a) =_B^p g(b))$$
 
 This matches the condition defining $\catname{scTrb_\pi^p}$, since, given two fibrations $B \to A$ and $A \to \Gamma$ in a $\pi$-tribe $\mathcal{T}$, computing the dependent product after taking the identity type (i.e, the forming the cotensor by $\square_\sharp^1$, as in $\iota_\mathcal{T}$) would yield the previous expression.
\end{remark}

\begin{remark}
 In type theory, constructing models of MLTT where this condition holds can be done within the framework of parametricity. The connection between parametricity and (semi)-cubical structures has been observed in the literature (see \cite{moeneclaey2021parametricity}), and motivates the shift from semi-simplicial to semi-cubical structures relative to \cite{ks2019internal}. Indeed, as we will establish later here, the "recipe" to connect tribes to semi-cubical tribes, via the so-called frame construction, yields "parametric" tribes, and hence lands directly in $\catname{scTrb_\pi^p}$. In the language of \cite{moeneclaey2021parametricity}, the frame construction is in fact the "free parametric model" functor, as discussed in Example 35 there.
 \end{remark}

 To summarize, we have the following commutative diagram,  
\[\begin{tikzcd}[ampersand replacement=\&]
	\textcolor{rgb,255:red,204;green,51;blue,51}{\catname{scTrb_\pi^p}} \&\& {\catname{scTrb_\pi}} \&\& {\mathbf{Trb_\pi}} \\
	\\
	\&\& \textcolor{rgb,255:red,204;green,51;blue,51}{\catname{scTrb_{\pi,\sim}}} \&\& {\mathbf{Trb_{\pi,\sim}}} \\
	\\
	\&\& \textcolor{rgb,255:red,204;green,51;blue,51}{\mathbf{scTrb}} \&\& {\mathbf{Trb}}
	\arrow["\sim"{description}, from=1-1, to=1-3]
	\arrow["\sim"{description}, from=1-3, to=1-5]
	\arrow["\sim"{description}, from=1-3, to=3-3]
	\arrow["\sim"{description}, from=1-5, to=3-5]
	\arrow["\sim"{description}, from=3-3, to=3-5]
	\arrow[from=3-3, to=5-3]
	\arrow[from=3-5, to=5-5]
	\arrow["\sim"{description}, from=5-3, to=5-5]
\end{tikzcd}\]
where the categories of tribes in red can be equipped with a fibration category structure, and where the indicated morphisms are DK-equivalences, as we will show later in this document.

 \begin{remark}
  Two tribes are equivalent in $\mathbf{Trb}$ if they are connected by a zig-zag of weak equivalences, namely morphisms of tribes which are DK-equivalences, or equivalently, by Cisinski's theorem, which induce an equivalence of categories at the level of the homotopy categories.  
  Importantly, since a $\pi$-tribe $\mathcal{T}'$ induces a locally cartesian closed $\mathbf{Ho}_\infty(\mathcal{T}')$ quasicategory, and since as a weak equivalence between tribes $f : \mathcal{T} \to \mathcal{T}'$ induces an equivalence of quasicategories $\mathbf{Ho}_\infty(f) : \mathbf{Ho}_\infty(\mathcal{T}) \to \mathbf{Ho}_\infty(\mathcal{T}')$, it follows that a tribe $\mathcal{T}$ in $\catname{scTrb_{\pi,\sim}}$ also yields a locally cartesian closed quasicategory $\mathbf{Ho}_\infty(\mathcal{T})$.  
 \end{remark}

 We recall the following standard fact about locally cartesian closed categories. The transposed statement to the setting of quasicategories is also true.
 
 \begin{proposition}
  A finitely complete category $\mathbf{C}$ is locally cartesian closed if and only if all the slice categories $\mathbf{C}_{/x}$ are cartesian closed, for $x$ an object of $\mathbf{C}$. Moreover, a functor $F:  \mathbf{C} \to \mathbf{D}$ between locally cartesian closed categories is a locally cartesian closed functor if and only if all induced functors $F_x:  \mathbf{C}_{/x} \to \mathbf{D}_{/Fx}$ are cartesian closed.
 \end{proposition}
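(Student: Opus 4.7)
The plan is to reduce both halves of the proposition to a single dictionary relating exponentials in a slice to dependent products along morphisms into that slice's base. First I would fix $x \in \mathbf{C}$ and observe that $\mathbf{C}_{/x}$ already has finite products computed as pullbacks over $x$, so its cartesian closedness amounts to the existence, for each $a : A \to x$, of a right adjoint to $a \times (-) : \mathbf{C}_{/x} \to \mathbf{C}_{/x}$. The key remark is the factorization $a \times (-) = \Sigma_a \circ a^*$, where $a^* : \mathbf{C}_{/x} \to \mathbf{C}_{/A}$ is the pullback functor and $\Sigma_a \dashv a^*$. Since $\Sigma_a$ always has a right adjoint, $\Sigma_a \circ a^*$ has one if and only if $a^*$ does, and in that case the slice exponential admits the concrete formula $b^a = \Pi_a(a^* b)$.

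With this at hand, the first statement of the proposition is immediate: requiring $\mathbf{C}_{/x}$ to be cartesian closed for every $x$ amounts to requiring $a^* : \mathbf{C}_{/x} \to \mathbf{C}_{/A}$ to have a right adjoint for every $a : A \to x$, which, as every morphism of $\mathbf{C}$ arises in this way, is precisely the definition of local cartesian closure. A minor subtlety is to check that an arbitrary morphism $f : X \to Y$ can be handled by the above in the appropriate slice; this is done via the canonical equivalence $\mathbf{C}_{/X} \simeq (\mathbf{C}_{/Y})_{/f}$, under which $f^*$ corresponds to the "product with $f$ valued in the slice over $f$" functor in $\mathbf{C}_{/Y}$, so that exponentiability of $f$ in $\mathbf{C}_{/Y}$ yields exactly the right adjoint $\Pi_f$.

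For the functor part I would apply the same dictionary pointwise. That $F$ preserves finite limits is equivalent to each $F_x$ preserving finite products (pullbacks into $x$ are binary products in $\mathbf{C}_{/x}$, and the terminal is dealt with by $F_*$ together with the identification $\mathbf{C}_{/*} \simeq \mathbf{C}$). Given this, the canonical comparison map $F(\Pi_a(a^* b)) \to \Pi_{Fa}((Fa)^* Fb)$ is exactly the canonical comparison $F_x(b^a) \to (F_x b)^{F_x a}$ for the slice exponentials, so $F$ preserves dependent products if and only if each $F_x$ preserves exponentials. Combining these, $F$ is locally cartesian closed if and only if every $F_x$ is cartesian closed.

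I do not anticipate a real obstacle: the entire argument is a routine unpacking of definitions, and the only mildly delicate point is making precise the pairing between "exponential by $f$ in $\mathbf{C}_{/Y}$" and "dependent product along $f$" in a way that extends uniformly to arbitrary morphisms, which is handled by the slice-of-slice equivalence mentioned above.
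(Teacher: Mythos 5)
The paper states this proposition as a ``standard fact'' and gives no proof, so there is nothing to compare against textually; the construction you need does, however, reappear in the paper's own proof of Lemma~\ref{expprod} as a pullback formula expressing $\Pi_f$ in terms of slice exponentials.

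The gap in your plan is the sentence ``Since $\Sigma_a$ always has a right adjoint, $\Sigma_a \circ a^*$ has one if and only if $a^*$ does.'' The ``if'' is just composition of right adjoints, but the ``only if'' is not a formal consequence of $\Sigma_a$ having a right adjoint: right-adjoint existence does not propagate backwards along postcomposition (for instance $\mathbf{Set}\to\mathbf{1}$ has a right adjoint, and so does any composite $\mathcal{C}\to\mathbf{Set}\to\mathbf{1}$ whenever $\mathcal{C}$ has a terminal object, while the first leg $\mathcal{C}\to\mathbf{Set}$ need not). That ``only if'' is precisely the direction you need for ``all slices cartesian closed $\Rightarrow$ lcc'', and it must be proved by exhibiting $\Pi_a$. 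Your slice-of-slice identification $\mathbf{C}_{/A}\simeq(\mathbf{C}_{/x})_{/a}$ is the right place to do this: under it $a^*$ becomes $g\mapsto(\pi_1:a\times g\to a)$, and when $\mathbf{C}_{/x}$ is cartesian closed its right adjoint is the pullback
\[
\Pi_a g \;=\; \mathrm{id}_x \times_{[a,a]} [a,\Sigma_a g]
\]
in $\mathbf{C}_{/x}$, the bottom map being the name of $\mathrm{id}_a$; the adjunction bijection is a direct transposition along $[a,-]$, but it has to be verified, not read off from the factorization. This is exactly the pullback square written out in the proof of Lemma~\ref{expprod}. A smaller point in the functor half: $F_x$ preserving the terminal of $\mathbf{C}_{/x}$ is automatic (it is $F(\mathrm{id}_x)=\mathrm{id}_{Fx}$), so ``each $F_x$ preserves finite products'' says nothing about $F$ preserving the terminal object of $\mathbf{C}$, and the identification $\mathbf{C}_{/*}\simeq\mathbf{C}$ is only useful once you already know $F(*)$ is terminal in $\mathbf{D}$; if the intended notion of lcc functor includes terminal preservation, that hypothesis must be carried separately rather than deduced from the $F_x$.
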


 \begin{remark}
 \label{piclosed}
  Given two $\pi$-tribes and a morphism of tribes $m : \mathcal{T} \to \mathcal{T}'$, we have, for every object $x$ of $\mathcal{T}$ (also seen as an object of $\mathbf{Ho}_\infty(\mathcal{T})$), two canonical transformations

\begin{equation}
 \label{transpi}
  \begin{tikzcd}
	{\mathcal{T}} && {\mathcal{T}'} && {\mathbf{Ho}_\infty(\mathcal{T})} && {\mathbf{Ho}_\infty(\mathcal{T}')} \\
	\\
	{\mathcal{T}} && {\mathcal{T}'} && {\mathbf{Ho}_\infty(\mathcal{T})} && {\mathbf{Ho}_\infty(\mathcal{T}')}
	\arrow["{(-)^x}"', from=1-1, to=3-1]
	\arrow["m"', from=3-1, to=3-3]
	\arrow["{(-)^{ m(x)}}", from=1-3, to=3-3]
	\arrow["m", from=1-1, to=1-3]
	\arrow[shorten <=17pt, shorten >=17pt, Rightarrow, from=3-1, to=1-3]
	\arrow["{(-)^{x}}"', from=1-5, to=3-5]
	\arrow["{\mathbf{Ho}_\infty(m)}"', from=3-5, to=3-7]
	\arrow["{(-)^{ m(x)}}", from=1-7, to=3-7]
	\arrow["{\mathbf{Ho}_\infty(m)}", from=1-5, to=1-7]
	\arrow[shorten <=19pt, shorten >=19pt, Rightarrow, from=3-5, to=1-7]
\end{tikzcd}
\end{equation}
which arise from the universal property of the exponential.

The homotopy category $\mathbf{Ho}(\mathcal{T})$ is also cartesian closed as the homotopy category of a $\pi$-tribe, and the previous two transformations both induce the following canonical transformation:

\begin{equation}
\label{derivedt}
\begin{tikzcd}
	{\mathbf{Ho}(\mathcal{T})} && {\mathbf{Ho}(\mathcal{T}')} \\
	\\
	{\mathbf{Ho}(\mathcal{T})} && {\mathbf{Ho}(\mathcal{T}')}
	\arrow["{(-)^{x}}"', from=1-1, to=3-1]
	\arrow["{\mathbf{Ho}(m)}"', from=3-1, to=3-3]
	\arrow["{(-)^{ m(x)}}", from=1-3, to=3-3]
	\arrow["{\mathbf{Ho}(m)}", from=1-1, to=1-3]
	\arrow[shorten <=18pt, shorten >=18pt, Rightarrow, from=3-1, to=1-3]
\end{tikzcd}
\end{equation}

More generally, by definition, the induced morphism $m_A : \mathcal{T}(A) \to \mathcal{T}'(mA)$ fits in similar diagrams, where $A$ is an object of $\mathcal{T}$, and $\mathcal{T}(A)$ is the full subcategory of the slice $\mathcal{T}_{/A}$ spanned by the fibrations (it is referred to as the \textit{fibrant} slice over $A$).
Observe that the internal product of a fibration along a fibration, in the sense of Definition 2.4.1 of \cite{joyal2017notes}, can be expressed from exponentials in a fibrant slice. In particular, we see that a morphism $m$ in $\catname{scTrb}$ is a morphism in $\catname{scTrb_{\pi,\sim}}$ if and only if, for every object $A$ of $\mathcal{T}$, $\mathbf{Ho}_\infty(m_A)$ is cartesian closed (in the suitable sense for quasicategories). This is, in turn, equivalent to requiring that for all $x$ in $\mathcal{T}(A)$, the derived transformation in \eqref{derivedt} is a natural isomorphism.
Equivalently, the property holds precisely when the comparison arrow $m_A(y^x) \to m_A(y)^{m_A(x)}$ is a weak equivalence for all objects $A$ of $\mathcal{T}$, and all objects  $x$ and $y$ of the fibrant slice $\mathcal{T}(A)$. 
This provides a practical criterion to establish that a morphism of tribes $m$ between two $\pi$-tribes is in $\catname{scTrb_{\pi,\sim}}$.
Finally, note that this is equivalent to any of the two transformations in \eqref{transpi} being invertible (hence both of them).
 \end{remark}
 
 \begin{remark}
  \label{eqpitribe}  
  A tribe $\mathcal{T}$ is equivalent, in $\mathbf{Trb}$, to a $\pi$-tribe (that is connected by a zig-zag of morphisms of tribes which are DK-equivalences)  if and only if $\mathbf{Ho}_\infty(\mathcal{T})$ is a locally cartesian closed quasicategory. The inverse implication is trivial. For the direct one, the result can be deduced from \cite[Theorem 2.4]{cherradi2022interpreting}, which provides a $\pi$-tribe $\mathcal{T}'$ such that $\mathbf{Ho}_\infty(\mathcal{T}') \simeq  \mathbf{Ho}_\infty(\mathcal{T})$, together with the fact, proved in  \cite{ks2019internal}, that $\catname{Trb} \to \catname{QCat}_{lex}$ is a DK-equivalence so that the (zig-zag of)  equivalence between $\mathbf{Ho}_\infty(\mathcal{T}')$ and $\mathbf{Ho}_\infty(\mathcal{T})$  implies the existence of a zig-zag of equivalences between $\mathcal{T}$ and $\mathcal{T}'$.
 \end{remark}

 We take the notion of fibrations and weak equivalences between tribes as in \cite{ks2019internal}, namely:
 
 \begin{definition}
 A weak equivalence between tribes is a morphism of tribes $F : \mathcal{T} \to \mathcal{T}'$ that is, moreover, a DK-equivalence. Equivalently, by Cisinski's characterization of DK-equivalence between fibration categories in \cite{cisinski2010invariance}, it is a morphism of tribe inducing an equivalence of categories $\mathbf{Ho}(F) : \mathbf{Ho}(\mathcal{T}) \to \mathbf{Ho}(\mathcal{T}')$.
 \end{definition}
 
 \begin{definition}
  A morphism of tribes $F : \mathcal{T} \to \mathcal{T}'$ is a fibration when:
  \begin{itemize}
   \item[$a.$] $F$ is an isofibration.
   \item[$b_1.$] any factorization of $Fk$

\[\begin{tikzcd}
	Fx && Fy \\
	& {z'}
	\arrow["Fk", from=1-1, to=1-3]
	\arrow[from=1-1, to=2-2]
	\arrow[from=2-2, to=1-3]
\end{tikzcd}\]
   as a weak equivalence followed by a fibration in $\mathcal{T}'$, where $k : x \to y$ is a morphism in $\mathcal{T}$, lifts to a factorization as a weak equivalence followed by a  fibration in $\mathcal{T}$.
   
   \item[$b_2.$] similarly, any factorization of $Fk$ as an anodyne map followed by a fibration lifts to a factorization as an anodyne map followed by a fibration in $\mathcal{T}$.
   \item[$c.$] any  \textit{pseudo-factorization} of $Fk$ in $\mathcal{T}'$,

\[\begin{tikzcd}
	Fx && Fy \\
	\\
	{z_0} && {z_1}
	\arrow["Fk", from=1-1, to=1-3]
	\arrow["\sim"{description}, two heads, from=3-1, to=1-1]
	\arrow["\sim"{description}, from=3-1, to=3-3]
	\arrow[two heads, from=3-3, to=1-3]
\end{tikzcd}\]
where the indicated maps are weak equivalences, lifts to one in $\mathcal{T}'$.
\item[$d_1.$] For a square as on the left below,
\[\begin{tikzcd}
	a && x && Fa && Fx \\
	\\
	b && y && Fb && Fy
	\arrow[from=1-5, to=1-7]
	\arrow["\sim"{description}, tail, from=1-5, to=3-5]
	\arrow[two heads, from=1-7, to=3-7]
	\arrow[from=3-5, to=3-7]
	\arrow["{h'}"', dashed, from=3-5, to=1-7]
	\arrow[from=3-1, to=3-3]
	\arrow[two heads, from=1-3, to=3-3]
	\arrow["\sim"{description}, tail, from=1-1, to=3-1]
	\arrow[from=1-1, to=1-3]
\end{tikzcd}\]
any lift $h'$ of its image through $F$ can be lifted to a lift of the square in $\mathcal{T}$.
\item[$d_2.$] For a "cofibrancy" lifting problem as on the left below,
\[\begin{tikzcd}
	&& x &&&& Fx \\
	\\
	b && y && Fb && Fy
	\arrow["\sim"{description}, two heads, from=1-7, to=3-7]
	\arrow[from=3-5, to=3-7]
	\arrow["{h'}"', dashed, from=3-5, to=1-7]
	\arrow[from=3-1, to=3-3]
	\arrow["\sim"{description}, two heads, from=1-3, to=3-3]
\end{tikzcd}\]
   any solution $h'$ of its image by $F$ can be lifted to a solution of the lifting problem in $\mathcal{T}$.
   
  \end{itemize}
  
  \begin{proposition}
   Fibrations are closed by pullback (computed in $\catname{Cat}$). 
  \end{proposition}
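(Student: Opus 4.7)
The plan is to check directly that, for any fibration of tribes $F : \mathcal{S} \to \mathcal{T}'$ and any morphism of tribes $G : \mathcal{T} \to \mathcal{T}'$, the pullback projection $F' : \mathcal{R} \to \mathcal{T}$, where $\mathcal{R} := \mathcal{T} \times_{\mathcal{T}'} \mathcal{S}$ in $\catname{Cat}$, satisfies all of the conditions $a$, $b_1$, $b_2$, $c$, $d_1$, $d_2$. First I would observe that, since $F$ is in particular an isofibration, the pullback $\mathcal{R}$ inherits a tribe structure with componentwise fibrations and componentwise anodyne maps (this is the same construction used for $\mathcal{T}^{(\wedge)}$ earlier in the excerpt, relying on Lemma 1.4.8 of \cite{joyal2017notes}), and both projections are morphisms of tribes. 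The property $a$ for $F'$ is immediate since isofibrations are closed under pullback in $\catname{Cat}$.

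For the lifting-type conditions, the strategy is always the same: push the data in $\mathcal{T}$ forward along $G$ to obtain data in $\mathcal{T}'$ over $FG'k = GF'k$, apply the corresponding lifting property of $F$ to obtain matching data in $\mathcal{S}$, and finally assemble the solutions in $\mathcal{T}$ and $\mathcal{S}$ into a solution in $\mathcal{R}$ by the universal property of the pullback. Concretely, for $b_1$, a factorization $x \stackrel{\sim}{\to} z \twoheadrightarrow y$ of $F'k$ in $\mathcal{T}$ produces by applying $G$ a factorization of $FG'k$ in $\mathcal{T}'$; applying $F$'s $b_1$ yields a factorization $G'x \stackrel{\sim}{\to} w \twoheadrightarrow G'y$ in $\mathcal{S}$ with $Fw = Gz$ and matching arrows, so $(z,w)$ defines an object of $\mathcal{R}$ and the two factorizations assemble into the required factorization of $k$. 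The arguments for $b_2$ and $c$ are identical, replacing the relevant class of morphisms. For $d_1$ and $d_2$, a lift $h'$ in $\mathcal{T}$ of a componentwise lifting problem in $\mathcal{R}$ pushes forward to a lift in $\mathcal{T}'$; condition $d_1$ (resp. $d_2$) for $F$ then produces a matching lift in $\mathcal{S}$, and the pair is the desired lift in $\mathcal{R}$.

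The only delicate point is ensuring that what one produces by combining a solution in $\mathcal{T}$ and a solution in $\mathcal{S}$ really lives in $\mathcal{R}$ — that is, that the intermediate objects and arrows agree strictly in $\mathcal{T}'$ after applying $G$ on one side and $F$ on the other. This is exactly why the strict lifting conditions $b_1, b_2, c, d_1, d_2$ require the lift in $\mathcal{S}$ to project on the nose to the prescribed data in $\mathcal{T}'$, and not merely up to isomorphism. The expected obstacle is therefore essentially notational: one has to verify that the lifts produced by $F$ indeed project equal to the $G$-image of the data one started from, but this is precisely what each of the conditions guarantees, so the verifications go through uniformly.
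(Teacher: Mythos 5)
Your argument is correct, and it is essentially the same direct verification that the paper delegates to Lemma~4.7 of Kapulkin--Szumi{\l}o (\cite{ks2019internal}): one checks each of the conditions $a$--$d_2$ for the pullback projection by applying $G$ to the given data in $\mathcal{T}$, invoking the corresponding property of $F$ to lift in $\mathcal{S}$, and gluing via the universal property of the pullback, using that the induced tribe structure on $\mathcal{R}$ has componentwise fibrations, anodynes, and weak equivalences. You correctly identify both the one non-trivial preliminary point (that $\mathcal{R}$ is a tribe with componentwise structure, via Lemma~1.4.8 of \cite{joyal2017notes}) and the reason the gluing works on the nose (the strictness built into conditions $b_1$--$d_2$).
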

  
  \begin{proof}
   This is proved in Lemma 4.7 of \cite{ks2019internal} in the setting of semi-cubical tribes, but this assumption plays no role in the proof.
  \end{proof}

  \begin{example}
  \label{projP}
   Given a tribe $\mathcal{T}$, the  projection $P\mathcal{T} \to \mathcal{T}\times \mathcal{T}$ is fibration. This is proved in Lemma 4.5 of \cite{ks2019internal} in the case where $\mathcal{T}$ is a semi-cubical tribe, but the proof does not rely on this fact.
  \end{example}

 \end{definition}

   \begin{remark}
   Observe that the category $\catname{scTrb_\pi}$ also admits pullbacks along isofibrations (which are then computed in the category $\mathbf{Cat}_{\square_\sharp}$ of semi-cubically enriched categories). This is because, in a pullback square along an isofibration as follows

\[\begin{tikzcd}
	{\mathcal{T}} && {\mathcal{T}_2} \\
	\\
	{\mathcal{T}_1} && {\mathcal{T}_0}
	\arrow[from=3-1, to=3-3]
	\arrow[two heads, from=1-3, to=3-3]
	\arrow[from=1-1, to=3-1]
	\arrow[from=1-1, to=1-3]
	\arrow["\lrcorner"{anchor=center, pos=0.125}, draw=none, from=1-1, to=3-3]
\end{tikzcd}\]
   the category $\mathcal{T}$ with the component-wise notion of fibration admits internal products which are computed component-wise. It also follows that the universal property of this diagram makes it a pullback in $\catname{scTrb_\pi}$, because any cone of $\pi$-closed morphisms on the previous cospan will induce a $\pi$-closed mediating map into the pullback $\mathcal{T}$. 
  \end{remark}

  \begin{lemma}
  \label{Pspi}
   The endofunctor $P : \mathbf{Trb} \to \mathbf{Trb}$ restricts to an endofunctor of $\mathbf{scTrb}_\pi$.
  \end{lemma}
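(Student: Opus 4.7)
The plan is to exploit the description of $P\mathcal{T}$ from \cref{cpo} as the full subtribe of $\mathcal{T}^{(\wedge)}$ consisting of spans $y \twoheadleftarrow x \twoheadrightarrow z$ such that $x \to y$ and $x \to z$ are trivial fibrations in $\mathcal{T}$, and to transport the semi-cubical structure of $\mathcal{T}$ to $P\mathcal{T}$ componentwise. Concretely, for a span $\sigma = (y \twoheadleftarrow x \twoheadrightarrow z)$ in $P\mathcal{T}$ and a finite semi-cubical set $K$, define
$$\sigma^{K} := (y^{K} \twoheadleftarrow x^{K} \twoheadrightarrow z^{K}),$$
where the cotensors on the right are computed in $\mathcal{T}$, and set the enrichment by $\underline{Hom}_{P\mathcal{T}}(\sigma,\sigma')_n := Hom_{P\mathcal{T}}(\sigma, \square_\sharp^n \triangleright \sigma')$ once the cotensor is shown to exist.

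First, I would check that $\sigma^K$ lies in $P\mathcal{T}$: Reedy fibrancy amounts to saying that the gap map $x^K \to y^K \times z^K = (y\times z)^K$ is a fibration, which follows from the semi-cubical tribe axiom applied to the fibration $x \to y \times z$ in $\mathcal{T}$; the homotopical condition follows since $x \to y$ and $x \to z$ are trivial fibrations, and the first clause of \cref{semicubical_tribe} (with $i$ the inclusion $\emptyset \to K$) ensures $x^K \to y^K$ and $x^K \to z^K$ remain trivial fibrations. The two structural conditions of a semi-cubical tribe for $P\mathcal{T}$ then reduce componentwise to the corresponding conditions for $\mathcal{T}$: the fibrations and pullbacks of fibrations in $\mathcal{T}^{(\wedge)}$ (hence in $P\mathcal{T}$) are computed componentwise, so for a monomorphism $i : K \to L$ and a fibration $p : \sigma \to \sigma'$ in $P\mathcal{T}$, the gap map $\sigma^L \to \sigma^K \times_{\sigma'^K} \sigma'^L$ is componentwise the gap map in $\mathcal{T}$, hence a (trivial) fibration when required. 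Similarly, the remark following \cref{cpo0} identifies anodyne maps in $P\mathcal{T}$ as componentwise anodyne maps of $\mathcal{T}$, and the second clause of \cref{semicubical_tribe} for $\mathcal{T}$ then transfers directly.

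The $\pi$-tribe structure on $P\mathcal{T}$ is precisely the content of \cref{piP}, so no further work is needed for this part. For functoriality on morphisms, observe that if $F : \mathcal{T} \to \mathcal{T}'$ is a morphism in $\mathbf{scTrb}_\pi$, the induced functor $PF : P\mathcal{T} \to P\mathcal{T}'$ is given componentwise by $F$ on each object of the span; it preserves the semi-cubical enrichment and cotensors because $F$ does on $\mathcal{T}$, and it preserves the $\pi$-closed structure again by \cref{piP}.

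The main obstacle, if any, is organizational rather than conceptual: one has to make sure that the componentwise recipe interacts correctly with both the Reedy fibrancy constraint (encoded via the pullback defining $\mathcal{T}^{(\wedge)}$) and the homotopicality constraint (both legs being trivial fibrations), simultaneously across cotensors, gap maps, and anodyne maps. Since the definition of $P\mathcal{T}$ as a subtribe of $\mathcal{T}^{(\wedge)}$ arises from a pullback in $\mathbf{Trb}$ that makes every structural operation componentwise, each axiom of a semi-cubical tribe for $P\mathcal{T}$ is a direct translation of the corresponding axiom for $\mathcal{T}$, and the verification is routine once the componentwise description is in place.
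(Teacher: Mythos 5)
Your proof has essentially the same skeleton as the paper's: show the cotensors in $P\mathcal{T}$ are computed pointwise, invoke \cref{piP} for the $\pi$-tribe structure on $P\mathcal{T}$, and then argue that $PF$ is semi-cubical and $\pi$-closed. You derive the pointwise cotensors via the explicit description of $P\mathcal{T}$ inside $\mathcal{T}^{(\wedge)}$ from \cref{cpo}, whereas the paper uses the end formula for the enriched hom; this is a cosmetic difference, and both routes are fine.

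There is, however, a genuine gap in the functoriality step. You assert that $PF$ "preserves the $\pi$-closed structure again by \cref{piP}," but \cref{piP} only states that $P\mathcal{T}$ is a $\pi$-tribe and that the two projections $P\mathcal{T} \to \mathcal{T}$ are $\pi$-closed; it says nothing about an arbitrary $\pi$-closed morphism $F : \mathcal{T} \to \mathcal{S}$ inducing a $\pi$-closed morphism $PF : P\mathcal{T} \to P\mathcal{S}$. The paper instead invokes the first item of \cref{piR} (taking $I = \mathbf{Sp}_w$, using the identification of $P\mathcal{T}$ with the category of Reedy fibrant homotopical span diagrams), which is exactly the statement needed. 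As written, your proof does not establish that $P$ restricts to an endofunctor on the morphisms of $\mathbf{scTrb}_\pi$. A secondary, less serious point: the clause "the gap map $\sigma^L \to \sigma^K \times_{\sigma'^K} \sigma'^L$ is componentwise the gap map in $\mathcal{T}$, hence a (trivial) fibration" elides the Reedy fibration condition at the apex of the span, which is a gap-map-of-gap-maps condition rather than a plain componentwise statement; this does hold, but the verification is too quick to stand on its own as written.
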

  \begin{proof}
   By \cref{piP}, if $\mathcal{T}$ is a tribe in $\mathbf{scTrb}_\pi$, then $P\mathcal{T}$ is a $\pi$-tribe. Moreover, if $F : \mathcal{T} \to \mathcal{S}$ is a $\pi$-closed morphism of $\pi$-tribes, it follows from \cref{piR} (using the characterization of $P\mathcal{T}$ as the category of Reedy fibrant homotopical span diagrams in $\mathcal{T}$) that $PF :  P\mathcal{T} \to P\mathcal{S}$ is also $\pi$-closed. 
   
   Also, by Lemma 2.22 in \cite{ks2019internal}, $P\mathcal{T}$ is a tribe. Moreover, the cotensors are defined pointwise, as we can see from the following natural isomorphisms, for $K$ a finite semi-cubical set, and for $(X,Y)$ a pair of objects in $P\mathcal{T}$:
   
   \begin{align*}
   Hom(K, Hom_{P\mathcal{T}}(X,Y)) & = Hom(K, \int_{c \in \mathbf{Sp}_w} Hom_\mathcal{T}(X(c),Y(c)))\\
   & \simeq \int_{c \in \mathbf{Sp}_w} Hom(K,Hom_\mathcal{T}(X(c),Y(c)))\\
   & \simeq \int_{c \in \mathbf{Sp}_w} Hom_\mathcal{T}(X(c),Y(c)^K)\\
   & = Hom_{P\mathcal{T}}(X,Y^K)
   \end{align*}
   
   Because the cotensors are pointwise, any semi-cubical functor $F : \mathcal{T} \to \mathcal{S}$ induces a functor $PF :  P\mathcal{T} \to P\mathcal{S}$ that is semi-cubical.
   
   This proves that the mapping $\mathcal{T} \mapsto P\mathcal{T}$ restricts to an endofunctor of $\mathbf{scTrb}_\pi$.   
  \end{proof}
  
  \begin{lemma}
  \label{iotapip}
   If $\mathcal{T}$ is a tribe in $\mathbf{scTrb}_\pi^p$, then so is $P\mathcal{T}$.
  \end{lemma}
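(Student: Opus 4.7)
The plan is to verify the two claims separately. That $P\mathcal{T}$ lies in $\mathbf{scTrb}_\pi$ is immediate from \cref{Pspi}, so the nontrivial content is to show that $\iota_{P\mathcal{T}} : P\mathcal{T} \to PP\mathcal{T}$ is $\pi$-closed. The key idea is to exhibit $\iota_{P\mathcal{T}}$ as the composite of the known $\pi$-closed map $P\iota_\mathcal{T}$ with an index-swapping automorphism of $PP\mathcal{T}$, and to invoke \cref{piR} to handle the swap.

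Explicitly, I would identify $PP\mathcal{T}$ with the tribe $\mathcal{T}^{\mathbf{Sp}_w \times \mathbf{Sp}_w}_\mathbf{R}$ of Reedy fibrant homotopical diagrams on the product of two copies of the homotopical span category, using the standard compatibility between iterated Reedy categories and the product Reedy structure. Under this identification, a direct unfolding, relying on the fact (established in the proof of \cref{Pspi}) that cotensors in $P\mathcal{T}$ are computed pointwise, yields for each $X \in P\mathcal{T}$ the equality $(\iota_{P\mathcal{T}}(X))(i,j) = (P\iota_\mathcal{T}(X))(j,i)$. In other words, $\iota_{P\mathcal{T}} = \sigma^* \circ P\iota_\mathcal{T}$, where $\sigma : \mathbf{Sp}_w \times \mathbf{Sp}_w \to \mathbf{Sp}_w \times \mathbf{Sp}_w$ is the swap isomorphism and $\sigma^*$ denotes precomposition by $\sigma$.

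Now $\sigma$, being an isomorphism of homotopical inverse categories, is in particular a discrete opfibration, so the second bullet of \cref{piR} yields that $\sigma^*$ is a $\pi$-closed morphism of tribes. Meanwhile, $\iota_\mathcal{T}$ is $\pi$-closed by the assumption $\mathcal{T} \in \mathbf{scTrb}_\pi^p$, and the proof of \cref{Pspi} shows (via \cref{piR} applied to the description of $P$ as the Reedy-diagram functor $(-)^{\mathbf{Sp}_w}_\mathbf{R}$) that $P$ preserves $\pi$-closedness, so $P\iota_\mathcal{T}$ is $\pi$-closed. The composite $\iota_{P\mathcal{T}} = \sigma^* \circ P\iota_\mathcal{T}$ is then $\pi$-closed, completing the argument. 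The main obstacle will be verifying the swap identity rigorously: this amounts to bookkeeping between the two ways of nesting $\mathbf{Sp}_w$-indexing, but care is needed to align the Reedy fibrancy conditions with the pointwise cotensor description simultaneously.
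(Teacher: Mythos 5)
Your proof is correct and follows essentially the same strategy as the paper's: reducing to \cref{Pspi}, identifying $\iota_{P\mathcal{T}}$ with $P\iota_\mathcal{T}$ post-composed with the swap automorphism of $PP\mathcal{T}$, and using \cref{piR} to see that both pieces are $\pi$-closed. The one small refinement you add over the paper — explicitly observing that the swap $\sigma^*$ is $\pi$-closed because $\sigma$ is in particular a discrete opfibration — is a clean way to discharge a step the paper leaves implicit.
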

  \begin{proof}
  Under the assumption on $\mathcal{T}$, $P\mathcal{T}$ is a tribe in $\mathbf{scTrb}_\pi$, by \cref{Pspi}. We need to check that the morphism of tribes $\iota_{P\mathcal{T}} : P\mathcal{T} \to PP\mathcal{T}$ is $\pi$-closed. But $\iota_{P\mathcal{T}}$ coincides with $P\iota_{\mathcal{T}}$ (modulo composing with the automorphism of $PP\mathcal{T}$ that swaps the roles of the two spans), as, by definition of the cotensors in $P\mathcal{T}$ and $PP\mathcal{T}$ (which are pointwise), both functors map the diagram below left, thought of as an object of $P\mathcal{T}$, to the diagram below right, thought of as an object of $PP\mathcal{T}$.

\[\begin{tikzcd}[ampersand replacement=\&,sep=large]
	\&\&\&\&\& x \\
	\& x \&\&\& y \& {x^{\Delta^1}} \& z \\
	y \&\& z \&\& {y^{\Delta^1}} \& x \& {z^{\Delta^1}} \\
	\&\&\&\& y \&\& z
	\arrow["\sim"{description}, two heads, from=1-6, to=2-5]
	\arrow["\sim"{description}, two heads, from=1-6, to=2-7]
	\arrow["\sim"{description}, two heads, from=2-2, to=3-1]
	\arrow["\sim"{description}, two heads, from=2-2, to=3-3]
	\arrow[from=2-6, to=1-6]
	\arrow["\sim"{description}, two heads, from=2-6, to=3-5]
	\arrow["\sim"{description}, two heads, from=2-6, to=3-6]
	\arrow["\sim"{description}, two heads, from=2-6, to=3-7]
	\arrow["\sim"{description}, two heads, from=3-5, to=2-5]
	\arrow["\sim"{description}, two heads, from=3-5, to=4-5]
	\arrow["\sim"{description}, two heads, from=3-6, to=4-5]
	\arrow["\sim"{description}, two heads, from=3-6, to=4-7]
	\arrow["\sim"{description}, two heads, from=3-7, to=2-7]
	\arrow["\sim"{description}, two heads, from=3-7, to=4-7]
\end{tikzcd}\]
  Note that this representation does not fully account for the Reedy fibrancy condition on the diagrams, but isomorphisms between two Reedy fibrant homotopical diagrams of shape $\mathbf{Sp}_w \times \mathbf{Sp}_w$ (where $\mathbf{Sp}_w$ is the homotopical span category) boils down to isomorphisms between these two diagrams as objects of the category $\mathcal{T}^{\mathbf{Sp}_w \times \mathbf{Sp}_w}$ (that is, without Reedy fibrancy criterion, or even the requirement that the diagrams are homotopical).
  
  The action on morphisms is similar. By \cref{piR}, the fact that $\iota_{\mathcal{T}}$ is $\pi$-closed implies the same property for $P\iota_{\mathcal{T}}$, so we can conclude.
  \end{proof}
  
\begin{proposition}
 $\catname{scTrb_\pi^p}$ can be endowed with the structure of a fibration category.
\end{proposition}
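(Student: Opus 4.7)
The plan is to verify the five axioms of a fibration category for $\catname{scTrb_\pi^p}$, transporting the fibration structure on $\catname{scTrb}$ established in \cite{ks2019internal} and using \cref{piP}, \cref{Pspi}, and \cref{iotapip} to keep track of the $\pi$-structure. I take the weak equivalences to be the DK-equivalences among morphisms of $\catname{scTrb_\pi^p}$ (which are automatically $\pi$-closed, by definition of the hom-set) and the fibrations to be those defined in the previous subsection. The straightforward axioms come first. The terminal tribe $*$ lies in $\catname{scTrb_\pi^p}$ since $P* \cong *$ makes $\iota_*$ an isomorphism, and any map to $*$ is immediately a fibration. The 2-out-of-3 property reduces to 2-out-of-3 for DK-equivalences, since $\pi$-closedness is built into the notion of morphism. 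Stability of trivial fibrations under pullback will transfer from $\catname{scTrb}$ once closure under pullbacks is in place.

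The main technical check is that $\catname{scTrb_\pi^p}$ admits pullbacks along fibrations and that the base change of a fibration is a fibration. Given a cospan $\mathcal{T}_1 \to \mathcal{T}_0 \leftarrow \mathcal{T}_2$ whose right leg is a fibration, the pullback $\mathcal{T}$ computed in $\mathbf{Cat}_{\square_\sharp}$ is a $\pi$-tribe and a semi-cubical tribe with component-wise structure, as recorded in the remark preceding \cref{Pspi}. Since $P$ preserves limits by \cref{fonctp} and the cotensor-based reflexivity map $\iota$ is both natural and component-wise (as used in the proof of \cref{Pspi}), the morphism $\iota_\mathcal{T}$ is determined pointwise by $\iota_{\mathcal{T}_1}$ and $\iota_{\mathcal{T}_2}$; the $\pi$-closedness required for $\iota_\mathcal{T}$ therefore reduces to that of the two factors. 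The fibration conditions on the pulled-back projection adapt the argument of Lemma 4.7 in \cite{ks2019internal}, which nowhere invokes the semi-cubical or $\pi$-specifics.

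Finally, for the path object factorization I use $\mathcal{T} \xrightarrow{\iota_\mathcal{T}} P\mathcal{T} \xrightarrow{(\pi_0,\pi_1)} \mathcal{T} \times \mathcal{T}$. By \cref{iotapip}, $P\mathcal{T}$ itself lies in $\catname{scTrb_\pi^p}$; the reflexivity map $\iota_\mathcal{T}$ is $\pi$-closed by the very definition of $\catname{scTrb_\pi^p}$ and is a DK-equivalence via the standard argument that the inclusion of constant spans into $\mathbf{Sp}_w$-diagrams is homotopically cofinal. The projections $\pi_0, \pi_1$ are $\pi$-closed by \cref{piP}, making $(\pi_0, \pi_1)$ a morphism of $\catname{scTrb_\pi^p}$, and it is a fibration by \cref{projP}. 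The main obstacle in the whole argument is the pullback closure step: without the component-wise descriptions of $P$, of cotensors, and of $\iota$ secured in \cref{Pspi} and \cref{iotapip}, the somewhat ad hoc condition defining $\catname{scTrb_\pi^p}$ would not obviously propagate through limits.
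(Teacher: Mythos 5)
Your argument is correct and follows essentially the same approach as the paper: verify that $\catname{scTrb_\pi^p}$ is a sub-fibration-category of $\catname{scTrb}$ by checking closure under pullbacks along fibrations (via the component-wise description of cotensors, internal products, and the reflexivity map $\iota$ in a pullback) and by using $P\mathcal{T} \in \catname{scTrb_\pi^p}$ (\cref{iotapip}) to produce a path-object factorization inside the subcategory. You spell out a few extra details the paper leaves implicit (terminal object, 2-out-of-3, the DK-equivalence of $\iota_\mathcal{T}$), but these are routine and the core content — the two closure checks — is identical.
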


\begin{proof}

 We define fibrations (resp. a weak equivalences) in $\catname{scTrb_\pi^p}$ to be fibrations (resp. a weak equivalences) as a morphisms of $\catname{scTrb}$. To prove that  $\catname{scTrb_\pi^p}$ inherits a fibration category structure from $\catname{scTrb}$, we just need to prove that it is closed under pullback along fibrations, and that, for all objects $\mathcal{T}$, there exists a factorization of the diagonal functor $\mathcal{T} \to \mathcal{T} \times \mathcal{T}  $ that lies inside $\catname{scTrb_\pi^p}$. The second point holds because $P\mathcal{T}$ induces the expected factorization, since we know that $P\mathcal{T} \in \catname{scTrb_\pi^p}$ from \cref{iotapip}. For the first one, consider a pullback in $\catname{scTrb_\pi}$
\[\begin{tikzcd}[ampersand replacement=\&]
	{\mathcal{T}} \&\&\& {\mathcal{T}_1} \\
	\\
	{\mathcal{T}_2} \&\&\& {\mathcal{T}_0}
	\arrow[from=1-1, to=1-4]
	\arrow[two heads, from=1-1, to=3-1]
	\arrow["\lrcorner"{anchor=center, pos=0.125}, draw=none, from=1-1, to=3-4]
	\arrow[two heads, from=1-4, to=3-4]
	\arrow[from=3-1, to=3-4]
\end{tikzcd}\]
where $\mathcal{T}_0$, $\mathcal{T}_1$ and $\mathcal{T}_2$ lie in $\catname{scTrb_\pi^p}$.
 Because the internal products and the cotensors in the pullback $\mathcal{T}$ are defined component-wise, it follows from $\iota_{\mathcal{T}_i} : \mathcal{T}_i \to P\mathcal{T}_i$ being $\pi$-closed (for $i=0,1,2$) that the morphism $\iota_{\mathcal{T}} : \mathcal{T} \to P\mathcal{T}$ induced by the cotensoring is also $\pi$-closed.
 This proves that $\catname{scTrb_\pi^p}$ inherits the structure of a fibration category.
 \end{proof}

\begin{proposition}
 $\catname{scTrb_{\pi,\sim}}$ can be endowed with the structure of a fibration category.
\end{proposition}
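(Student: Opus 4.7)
The plan is to endow $\catname{scTrb_{\pi,\sim}}$ with the classes of fibrations and weak equivalences inherited from $\catname{scTrb}$: a morphism of $\catname{scTrb_{\pi,\sim}}$ is a fibration (resp. weak equivalence) precisely when its underlying map in $\catname{scTrb}$ is one. A useful preliminary observation is that any weak equivalence of $\catname{scTrb}$ between two objects of $\catname{scTrb_{\pi,\sim}}$ automatically lies in $\catname{scTrb_{\pi,\sim}}$: a DK-equivalence induces an equivalence of quasicategories on $\mathbf{Ho}_\infty$, and an equivalence between lcc quasicategories is itself lcc. Given this, 2-out-of-3 and the stability of (trivial) fibrations under pullback are inherited directly from $\catname{scTrb}$, so the only substantial points to verify are closure of $\catname{scTrb_{\pi,\sim}}$ under pullbacks along fibrations and the existence of diagonal factorizations inside $\catname{scTrb_{\pi,\sim}}$.

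For the diagonal factorization I would take the path-tribe factorization $\mathcal{T} \xrightarrow{\iota_\mathcal{T}} P\mathcal{T} \twoheadrightarrow \mathcal{T} \times \mathcal{T}$ provided by the semi-cubical structure, whose last map is a fibration in $\catname{scTrb}$ by \cref{projP}. Here $\mathcal{T} \times \mathcal{T}$ lies in $\catname{scTrb_{\pi,\sim}}$ because if $\mathcal{T}$ is equivalent to a $\pi$-tribe $\mathcal{T}'$ then $\mathcal{T} \times \mathcal{T}$ is componentwise equivalent to the $\pi$-tribe $\mathcal{T}' \times \mathcal{T}'$, and each projection $P\mathcal{T} \to \mathcal{T}$ being a weak equivalence in $\catname{scTrb}$ shows that $P\mathcal{T}$ is itself equivalent to a $\pi$-tribe. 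The map $\iota_\mathcal{T}$ is a weak equivalence of $\catname{scTrb}$, hence a morphism of $\catname{scTrb_{\pi,\sim}}$ by the preliminary observation. For the fibration $P\mathcal{T} \to \mathcal{T} \times \mathcal{T}$, post-composition with either product projection recovers a DK-equivalence $P\mathcal{T} \to \mathcal{T}$, so via the induced equivalence $\mathbf{Ho}_\infty(P\mathcal{T}) \simeq \mathbf{Ho}_\infty(\mathcal{T})$ the $\mathbf{Ho}_\infty$-image of $P\mathcal{T} \to \mathcal{T} \times \mathcal{T}$ is identified with the diagonal $\mathbf{Ho}_\infty(\mathcal{T}) \to \mathbf{Ho}_\infty(\mathcal{T}) \times \mathbf{Ho}_\infty(\mathcal{T})$, which preserves dependent products componentwise and is therefore lcc.

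For closure under pullbacks along fibrations, given $\mathcal{T} := \mathcal{T}_1 \times_{\mathcal{T}_0} \mathcal{T}_2$ formed in $\catname{scTrb}$ with $\mathcal{T}_1 \twoheadrightarrow \mathcal{T}_0$ a fibration and the three corners in $\catname{scTrb_{\pi,\sim}}$, I would exploit exactness of $\mathbf{Ho}_\infty : \catname{scTrb} \to \catname{QCat}_{lex}$ to send the square to a homotopy pullback in $\catname{QCat}_{lex}$ with lcc corners and lcc structure maps. The main obstacle, and really the only nontrivial ingredient, is the stability of lcc quasicategories under homotopy pullbacks along lcc isofibrations together with lcc-ness of the projections. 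I would settle this by observing (compare \cref{piclosed}) that the dependent products in the pullback can be computed componentwise from those of $\mathcal{T}_1$ and $\mathcal{T}_2$, which simultaneously yields lcc-ness of $\mathbf{Ho}_\infty(\mathcal{T})$ and of the two projections. Assembling these two steps then yields the fibration category structure on $\catname{scTrb_{\pi,\sim}}$.
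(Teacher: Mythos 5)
Your proof is correct and takes essentially the same approach as the paper: inherit the fibrations and weak equivalences from $\catname{scTrb}$, note that weak equivalences between objects of $\catname{scTrb_{\pi,\sim}}$ automatically land back in $\catname{scTrb_{\pi,\sim}}$, use $P\mathcal{T}$ for the diagonal factorization, and deduce closure under pullbacks along fibrations by passing to a homotopy pullback of lcc quasicategories via $\mathbf{Ho}_\infty$. Two small remarks: your treatment of why $P\mathcal{T} \twoheadrightarrow \mathcal{T}\times\mathcal{T}$ itself lies in $\catname{scTrb_{\pi,\sim}}$ is actually a bit more explicit than the paper's one-line dismissal, and your phrase about the dependent products in the pullback being computed componentwise from those of $\mathcal{T}_1$ and $\mathcal{T}_2$ should be understood at the level of the quasicategories $\mathbf{Ho}_\infty(\mathcal{T}_i)$ rather than the tribes themselves, since objects of $\catname{scTrb_{\pi,\sim}}$ need only be equivalent to $\pi$-tribes and may lack $1$-categorical dependent products.
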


\begin{proof}
 This is just a sub-fibration category of $\catname{scTrb}$, that is, it is closed under pullbacks along fibrations and contains some path-object factorization as in the definition of $\catname{scTrb}$. These two properties readily imply that, with the same notion of fibrations and weak equivalences, $\catname{scTrb_{\pi,\sim}}$ is a fibration category.
 
 The second point is clear, essentially because any weak equivalence $u : \mathcal{T} \to \mathcal{T}'$ in $\catname{scTrb}$, where $\mathcal{T}$ (hence also $\mathcal{T}'$) is equivalent to a $\pi$-tribe yields, in particular, a locally cartesian closed $\infty$-functor $\mathbf{Ho}_\infty(u)$.
 To see the first point, consider a pullback diagram along a fibration in $\catname{scTrb}$ as follows.

\[\begin{tikzcd}
	{\mathcal{T}} && {\mathcal{T}_1} \\
	\\
	{\mathcal{T}_2} && {\mathcal{T}_0}
	\arrow[from=3-1, to=3-3]
	\arrow[from=1-3, to=3-3]
	\arrow[from=1-1, to=3-1]
	\arrow[from=1-1, to=1-3]
	\arrow["\lrcorner"{anchor=center, pos=0.125}, draw=none, from=1-1, to=3-3]
\end{tikzcd}\]
 
 This is a homotopy pullback, which means that the induced square

\[\begin{tikzcd}
	{\mathbf{Ho}_\infty(\mathcal{T})} && {\mathbf{Ho}_\infty(\mathcal{T}_1)} \\
	\\
	{\mathbf{Ho}_\infty(\mathcal{T}_2)} && {\mathbf{Ho}_\infty(\mathcal{T}_0)}
	\arrow[from=3-1, to=3-3]
	\arrow[from=1-3, to=3-3]
	\arrow[from=1-1, to=3-1]
	\arrow[from=1-1, to=1-3]
\end{tikzcd}\]
 
 exhibits $\mathbf{Ho}_\infty(\mathcal{T})$ as a pullback of locally cartesian closed quasicategories in the (large) quasicategory $\mathcal{Q}Cat$ of (small) quasicategories. Thus, $\mathbf{Ho}_\infty(\mathcal{T})$ is locally cartesian closed, which implies, by Remark \ref{eqpitribe}, that $\mathcal{T}$ is equivalent to a $\pi$-tribe. The very same argument, using Remark \ref{piclosed}, gives a proof that the pullback square above also defines a pullback in $\catname{scTrb_{\pi,\sim}}$. \end{proof}

\section{The rigidification tool}

The following construction, despite its simplicity, is the result at the core of the present article. 

The main idea can be broadly stated as follows: when structure in a tribe is preserved only up to weak equivalence by a functor $F : \mathcal{T} \to  \mathcal{S}$, the data provided by the (canonical) weak equivalences "lift" $F$, in a sense, through the canonical path object $P\mathcal{S}$. This allows one to "factor" $F$ as a span of functors that preserve the structure up to isomorphism. The precise construction can be thought of as an instance of Artin gluing. It can also be seen as an instance of oplax limits in the sense of Definition 12.3 in \cite{shulman2015} (or rather a variation of this definition where $I$ is an inverse homotopical category, and where we ask the morphism $A_\alpha : Ax \to  \alpha^*(A_y)$ to be a weak equivalence whenever $\alpha$ is a weak equivalence), namely, we consider the following diagram $D : \mathbf{Sp}_w^{op} \to \mathbf{Cat}$
\[\begin{tikzcd}[ampersand replacement=\&]
	\& {\mathcal{S}} \\
	{\mathcal{T}} \&\& {\mathcal{S}}
	\arrow["F", from=2-1, to=1-2]
	\arrow["{id_{\mathcal{S}}}"', from=2-3, to=1-2]
\end{tikzcd}\]
where $\mathbf{Sp}_\simeq$ is the span-shaped homotopical category where all arrows are weak equivalences. The category of interest would then be $\llbracket \mathbf{Sp}_w, D \rrbracket_\mathbf{f}$ with the notations of \cite{shulman2015}.

In preparation for the proof of the main result of this section (Lemma \ref{strf}), we start by establishing the following statement, dealing only with exponentials:
\begin{lemma}
\label{strf0}

	Suppose $F : \mathcal{T} \to \mathcal{S}$ is a morphism of tribes between two $\pi$-tribes in $\mathbf{scTrb}$, and assume that $\catname{\mathbf{Ho}_\infty}(F)$ is a cartesian closed $\infty$-functor. Then there exists a span $\mathcal{T}' \to \mathcal{T} \times \mathcal{S}$ of semi-cubical tribes such that $\mathcal{T}'$ admits exponentials, $\mathcal{T}' \to \mathcal{T}$ is an exponential preserving weak equivalence in $\mathbf{Trb}$, and $\mathcal{T}' \to \mathcal{S}$ is an exponential-preserving functor, fitting in a commutative diagram:
 
\[\begin{tikzcd}
	& {\mathcal{T}} \\
	{\mathcal{T}} && {\mathcal{S}} \\
	& {\mathcal{T}'}
	\arrow["\sim"{description}, from=3-2, to=2-1]
	\arrow[from=3-2, to=2-3]
	\arrow["{id_{\mathcal{T}}}"', from=1-2, to=2-1]
	\arrow["F", from=1-2, to=2-3]
	\arrow["m"', from=1-2, to=3-2]
\end{tikzcd}\]
 Here, $m$ is necessarily a weak equivalence; furthermore $\mathcal{T}' \to \mathcal{S}$ is a fibration, and $\mathcal{T}' \to \mathcal{T}$ is a trivial fibration.
\end{lemma}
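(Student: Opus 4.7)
My plan is to construct $\mathcal{T}'$ as the strict pullback $\mathcal{T} \times_{F, \mathcal{S}, p_0} P\mathcal{S}$ in $\mathbf{scTrb}$, where $p_0 : P\mathcal{S} \to \mathcal{S}$ projects a span $F(x) \leftarrow z \to y$ onto its left endpoint. An object of $\mathcal{T}'$ is then a pair $(x, \sigma)$ with $x \in \mathcal{T}$ and $\sigma = (F(x) \xleftarrow{\sim} z \xrightarrow{\sim} y) \in P\mathcal{S}$, and morphisms are pairs matching under $F$ and $p_0$. The projection $\pi_\mathcal{T} : \mathcal{T}' \to \mathcal{T}$ is a trivial fibration, as a base change of $p_0$: the latter is a fibration (by \cref{projP} composed with the first projection $\mathcal{S} \times \mathcal{S} \to \mathcal{S}$) and a weak equivalence (it has a section via the reflexivity map $r : \mathcal{S} \to P\mathcal{S}$). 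The morphism $m : \mathcal{T} \to \mathcal{T}'$ induced by $r$ sends $x \mapsto (x, r(F(x)))$; by construction $\pi_\mathcal{T} \circ m = \mathrm{id}_{\mathcal{T}}$ and $\pi_\mathcal{S} \circ m = F$, giving the commutative triangle, and 2-out-of-3 yields that $m$ is a weak equivalence.

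For exponentials in $\mathcal{T}'$, given $(x_1, \sigma_1)$ and $(x_2, \sigma_2)$, the candidate is $(x_2^{x_1}, \tau)$. First, in the $\pi$-tribe $P\mathcal{S}$ (\cref{piP}), form $\tilde{\sigma} := \sigma_2^{\sigma_1}$; by $\pi$-closedness of $p_0$ and $p_1$, its endpoints are $F(x_2)^{F(x_1)}$ and $y_2^{y_1}$. The hypothesis together with \cref{piclosed} gives that the comparison map $c : F(x_2^{x_1}) \to F(x_2)^{F(x_1)}$ is a weak equivalence in $\mathcal{S}$. Pull back the middle object $E$ of $\tilde{\sigma}$ along $c$ (the map $E \to F(x_2)^{F(x_1)}$ being a trivial fibration as the left leg of a span in $P\mathcal{S}$) to obtain $E' := F(x_2^{x_1}) \times_{F(x_2)^{F(x_1)}} E$; the resulting span $\tau := (F(x_2^{x_1}) \leftarrow E' \to y_2^{y_1})$ has weak-equivalence legs and is Reedy fibrant (its gap map is a pullback of the gap map of $\tilde{\sigma}$), so $\tau \in P\mathcal{S}$ and $(x_2^{x_1}, \tau) \in \mathcal{T}'$. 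By construction, $\pi_\mathcal{T}$ and $\pi_\mathcal{S}$ send this exponential respectively to $x_2^{x_1}$ and $y_2^{y_1}$, so both projections preserve exponentials strictly.

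To verify the universal property, consider a test object $(x_0, \sigma_0)$. A morphism $(x_0, \sigma_0) \to (x_2^{x_1}, \tau)$ is a pair $(g, \psi)$ with $g : x_0 \to x_2^{x_1}$ in $\mathcal{T}$ and $\psi : \sigma_0 \to \tau$ in $P\mathcal{S}$ satisfying $F(g) = p_0 \psi$. Composing $\psi$ with the canonical map $\tau \to \tilde{\sigma}$ (whose left component is $c$) and then transposing in the $\pi$-tribe $P\mathcal{S}$ produces $\phi : \sigma_0 \times \sigma_1 \to \sigma_2$; transposing $g$ in $\mathcal{T}$ produces $f : x_0 \times x_1 \to x_2$. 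The compatibility $F(g) = p_0 \psi$ translates, via the definition of $c$, into $F(f) = p_0 \phi$, exhibiting a morphism $(x_0, \sigma_0) \times (x_1, \sigma_1) \to (x_2, \sigma_2)$. For the inverse direction, given $\tilde{\phi} : \sigma_0 \to \tilde{\sigma}$ with $p_0 \tilde{\phi} = c \circ F(g)$, span-morphism compatibility forces the left component of the sought $\psi : \sigma_0 \to \tau$ to be $F(g)$, while the middle component is determined uniquely by the pullback universal property defining $E'$, yielding both existence and uniqueness.

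The chief obstacle is the bookkeeping required to track the universal property through these transpositions, pullbacks, and span-morphism compatibilities. Verifying that $\pi_\mathcal{S} : \mathcal{T}' \to \mathcal{S}$ is moreover a fibration in $\mathbf{scTrb}$ is a secondary point: it does not follow directly from the pullback but can be obtained using the fibration property of $P\mathcal{S} \to \mathcal{S} \times \mathcal{S}$ from \cref{projP}, and, if needed, by further factorizing $\pi_\mathcal{S}$ as an anodyne morphism followed by a fibration (an operation that does not disturb the exponential-preservation claims). All semi-cubical structure on $\mathcal{T}'$ is inherited componentwise from $\mathcal{T}$, $P\mathcal{S}$, and $\mathcal{S}$, using that $F$ is a morphism of semi-cubical tribes.
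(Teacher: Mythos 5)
Your proposal follows essentially the same strategy as the paper: construct $\mathcal{T}'$ as a pullback of the path tribe $P\mathcal{S}$, build the exponential object in $\mathcal{T}'$ by correcting the $P\mathcal{S}$-exponential with a pullback along the comparison map $c : F(x_2^{x_1}) \to F(x_2)^{F(x_1)}$ (a weak equivalence by hypothesis), and verify the universal property via transposition. Your pullback $\mathcal{T} \times_{F,\mathcal{S},p_0} P\mathcal{S}$ is isomorphic to the paper's $(\mathcal{T}\times\mathcal{S}) \times_{\mathcal{S}\times\mathcal{S}} P\mathcal{S}$ (the extra $\mathcal{S}$-component is determined by $\partial_1$), so the two constructions agree, and the verification of the exponential's universal property through span-level bookkeeping matches the paper's argument in all essentials. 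The one place you are slightly off-track is the remark that $\pi_\mathcal{S}$ being a fibration ``can be obtained ... by further factorizing $\pi_\mathcal{S}$ as an anodyne morphism followed by a fibration'': that would replace $\mathcal{T}'$ by a different tribe and you would then have to re-verify exponential-preservation, which is not automatic. The clean route is the one the paper uses: observe that $\mathcal{T}' \to \mathcal{T} \times \mathcal{S}$ is a pullback of the fibration $\langle \partial_0, \partial_1 \rangle : P\mathcal{S} \to \mathcal{S} \times \mathcal{S}$ (\cref{projP}), hence itself a fibration, and then $\pi_\mathcal{S}$ is its composite with the projection $\mathcal{T} \times \mathcal{S} \to \mathcal{S}$, which is a fibration since $\mathcal{T}$ is fibrant.
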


\begin{proof}
The projection functor $P\mathcal{S} \to \mathcal{S} \times \mathcal{S}$ is a fibration, as mentioned in Example \ref{projP}. Moreover, by construction of the exponentials in $P\mathcal{S}$, the two projections $\partial_0,  \partial_1 : P\mathcal{S} \to \mathcal{S}$  preserve them.  We construct the expected span by forming the following pullback (computed in $\mathbf{Cat}$):
\begin{equation}
\label{pbr}
 \begin{tikzcd}
	{\mathcal{T}'} && {P\mathcal{S}} \\
	\\
	{\mathcal{T} \times \mathcal{S}} && {\mathcal{S} \times \mathcal{S}}
	\arrow["{<\partial_0,\partial_1>}", two heads, from=1-3, to=3-3]
	\arrow["{F \times id_{\mathcal{S}}}"', from=3-1, to=3-3]
	\arrow[two heads, from=1-1, to=3-1]
	\arrow["u", from=1-1, to=1-3]
	\arrow["\lrcorner"{anchor=center, pos=0.125}, draw=none, from=1-1, to=3-3]
\end{tikzcd}
\end{equation}

 We will show that $\mathcal{T}'$ has exponentials, and that the projections $\mathcal{T}' \to \mathcal{T}$ and $\mathcal{T}' \to \mathcal{S}$ preserve those.
Let $A = (a,x, v \to Fa \times x)$ and $B = (b,y, w \to Fb \times y)$  be two objects of $\mathcal{T}'$, where the elements of tuples correspond in order to the components in $\mathcal{T}$, $\mathcal{S}$ and $P\mathcal{S}$ (thinking of the spans that are objects of $P\mathcal{S}$ as a single arrow into a product). Consider the exponential $uA^{uB}$ in $P\mathcal{S}$. By definition of $u$, one has that $ \partial_0 \circ u = F \circ \partial'_0$, and thus that $\partial_0 \circ uA = Fa$, and $\partial_0 \circ uB = Fb$. It follows that the exponential $uA^{uB}$ is transported to $Fa^{Fb}$ by $\partial_0 : P\mathcal{S} \to \mathcal{S}$. Similarly, $uA^{uB}$ is mapped to $x^y$ by $\partial_1 : P\mathcal{S} \to \mathcal{S}$. It follows that $uA^{uB}$ is of the form $(z \to F(a)^{F(b)} \times x^y)$.
 
  Now, we claim that the exponential $A^B$ in $\mathcal{T}'$ can be defined as the tuple $S = (a^b,x^y,s \to F(a^b) \times x^y)$, where $s \to F(a^b) \times x^y$ is constructed by pullback in $\mathcal{S}$ as follows:

\begin{equation}
\label{pb1}
\begin{tikzcd}
	s && z \\
	\\
	{F(a^b) \times x^y} && {F(a)^{F(b)} \times x^y}
	\arrow[from=3-1, to=3-3]
	\arrow[two heads, from=1-3, to=3-3]
	\arrow[two heads, from=1-1, to=3-1]
	\arrow[from=1-1, to=1-3]
	\arrow["\lrcorner"{anchor=center, pos=0.125}, draw=none, from=1-1, to=3-3]
\end{tikzcd}
\end{equation}
We show that the span $s \to F(a^b) \times x^y$ defines an object of $\mathcal{T}'$, namely, that it consists of a span of trivial fibrations. This is established by observing that, by hypothesis on $F$, and using Remark \ref{piclosed}, the canonical map $F(a^b) \to Fa^{Fb}$ is a weak equivalence in $\mathcal{S}$. From this, it follows that the bottom map $F(a^b) \times x^y \to F(a)^{F(b)} \times x^y$ is also a weak equivalence, hence also the top map $s \to z$, since weak equivalences in a tribe are stable by pullbacks along fibrations. We now have two commutative squares as follows:
\[\begin{tikzcd}
	s && z && s && z \\
	\\
	{F(a^b)} && {F(a)^{F(b)}} && {x^y} && {x^y}
	\arrow["\sim"{description}, from=3-1, to=3-3]
	\arrow["\sim"{description}, two heads, from=1-3, to=3-3]
	\arrow[two heads, from=1-1, to=3-1]
	\arrow["\sim"{description}, from=1-1, to=1-3]
	\arrow["{id_{x^y}}"', from=3-5, to=3-7]
	\arrow["\sim"{description}, two heads, from=1-7, to=3-7]
	\arrow[two heads, from=1-5, to=3-5]
	\arrow["\sim"{description}, from=1-5, to=1-7]
\end{tikzcd}\]
Finally, we conclude, by the 2-out-of-3 property for weak equivalences, that, in the two diagrams above, both $s \to F(a^b)$ and $s \to x^y$ are weak equivalences. This establishes that $S$ is an object of $\mathcal{T}'$. We explain now why it defines the exponential object $A^B$ in $\mathcal{T}'$.

Note that $S$ comes with a map $\epsilon': S \times B \to A$ obtained by composing the evaluation map in $P\mathcal{S}$

\[\begin{tikzcd}
	{z \times w} && v \\
	\\
	{F(a)^{F(b)}\times x^y \times F(b) \times y} && {F(a) \times x}
	\arrow[two heads, from=1-1, to=3-1]
	\arrow[from=3-1, to=3-3]
	\arrow[from=1-1, to=1-3]
	\arrow[from=1-3, to=3-3]
\end{tikzcd}\]
with the square

\[\begin{tikzcd}
	{s \times w} && {z \times w} \\
	\\
	{F(a^b) \times F(b) \times x^y \times y} && {F(a)^{F(b)} \times F(b)\times x^y \times y}
	\arrow[from=3-1, to=3-3]
	\arrow[two heads, from=1-3, to=3-3]
	\arrow[two heads, from=1-1, to=3-1]
	\arrow[from=1-1, to=1-3]
	\arrow["\lrcorner"{anchor=center, pos=0.125}, draw=none, from=1-1, to=3-3]
\end{tikzcd}\]
We prove that $\epsilon'$ satisfies the property for the evaluation map of an exponential object $A^B$. Consider an object $C = (c,y', w' \to Fc \times y')$ of $\mathcal{T}'$ with a morphism $k : C \times B \to A$.
By definition, the map $u(k)$ factors through the evaluation morphism $\epsilon : u(A)^{u(B)} \times u(B) \to u(A)$, yielding a uniquely defined morphism $\lambda(k) : u(C) \to u(A)^{u(B)}$.
In particular we have a square

\[\begin{tikzcd}
	{w'} && z \\
	\\
	{F(c) \times y'} && {F(a)^{F(b)} \times x^y}
	\arrow[from=3-1, to=3-3]
	\arrow[two heads, from=1-3, to=3-3]
	\arrow[two heads, from=1-1, to=3-1]
	\arrow[from=1-1, to=1-3]
\end{tikzcd}\]
where the bottom map factors uniquely through $(F(a^b) \times F(b) )\times x^y$ by the defining property of the exponential in $\mathcal{T}$. By the universal property of the pullback \eqref{pb1}, we get the expected unique factorization in $P\mathcal{S}$, which yields a uniquely defined factorization of $k$ through the evaluation $\epsilon': S \times B \to A$. This proves that $\mathcal{T}'$ admits exponentials. We also observe that, if $A \to A'$ is a fibration in $\mathcal{T}'$, then the induced map $A^B \to A'^B$ will also be a fibration, since its $P\mathcal{S}$ component is obtained via pullback from the map $uA^{uB} \to uA'^{uB}$, which is a fibration by virtue of $P\mathcal{S}$ being a $\pi$-tribe. 

We still need to check that the first component $\mathcal{T}' \to \mathcal{T}$ of the span is a trivial fibration. To see this, observe that it also arises as a pullback of a trivial fibration:

\[\begin{tikzcd}
	{\mathcal{T}'} && {P\mathcal{S}} \\
	\\
	{\mathcal{T} \times \mathcal{S}} && {\mathcal{S} \times \mathcal{S}} \\
	\\
	{\mathcal{T}} && {\mathcal{S}}
	\arrow[from=1-3, to=3-3]
	\arrow["{F \times id_{\mathcal{S}}}"', from=3-1, to=3-3]
	\arrow[from=1-1, to=3-1]
	\arrow["u", from=1-1, to=1-3]
	\arrow["\lrcorner"{anchor=center, pos=0.125}, draw=none, from=1-1, to=3-3]
	\arrow[from=3-1, to=5-1]
	\arrow[from=3-3, to=5-3]
	\arrow["F"', from=5-1, to=5-3]
	\arrow["\lrcorner"{anchor=center, pos=0.125}, draw=none, from=3-1, to=5-3]
\end{tikzcd}\]

Finally, by the universal property of the pullback, we get a morphism $m$ (which needs not preserve exponentials) as below
\[\begin{tikzcd}
	{\mathcal{T}} && {\mathcal{S}} \\
	& {\mathcal{T}'} && {P\mathcal{S}} \\
	\\
	& {\mathcal{T} \times \mathcal{S}} && {\mathcal{S} \times \mathcal{S}}
	\arrow[from=2-4, to=4-4]
	\arrow["{F \times id_{\mathcal{S}}}"', from=4-2, to=4-4]
	\arrow[from=2-2, to=4-2]
	\arrow["u", from=2-2, to=2-4]
	\arrow["\lrcorner"{anchor=center, pos=0.125}, draw=none, from=2-2, to=4-4]
	\arrow["{<id_{\mathcal{T}},F>}"', curve={height=6pt}, from=1-1, to=4-2]
	\arrow["m"', dashed, from=1-1, to=2-2]
	\arrow["F", curve={height=-6pt}, from=1-1, to=1-3]
	\arrow["{\iota_{\mathcal{S}}}", curve={height=-6pt}, from=1-3, to=2-4]
\end{tikzcd}\]
so that the following diagram commutes:

\[\begin{tikzcd}
	& {\mathcal{T}'} \\
	{\mathcal{T}} && {\mathcal{S}} \\
	& {\mathcal{T}}
	\arrow["\sim"{description}, from=1-2, to=2-1]
	\arrow[from=1-2, to=2-3]
	\arrow["{id_{\mathcal{T}}}", from=3-2, to=2-1]
	\arrow["f"', from=3-2, to=2-3]
	\arrow["m"', from=3-2, to=1-2]
\end{tikzcd}\]\end{proof}

We now turn to the result of interest of this section, which provides a local version of the previous lemma. We will be able to adapt it by using the characterization of the internal products in terms of the exponentials in the fibrant slices:
 
\begin{lemma}
\label{expprod}
Let $\mathcal{T}$ be a tribe. Then, $\mathcal{T}$ is a $\pi$-tribe if and only if all fibrant slices $\mathcal{T}(A)$, for $A$ an object of $\mathcal{T}$, admit exponentials, and  $x^z \to y^z$ is a fibration as soon as $x \to y$ is a fibration. 
 A morphism of tribes $F : \mathcal{T} \to \mathcal{S}$, between two $\pi$-tribes, preserves internal products up to isomorphism if and only if all the induced functors $F(A) : \mathcal{T}(A) \to \mathcal{S}(FA)$, for $A$ an object of $\mathcal{T}$, between the fibrant slices, preserve exponentials up to isomorphism. 
\end{lemma}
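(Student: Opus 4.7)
The plan is to mirror the classical proof that a category with pullbacks is locally cartesian closed if and only if every slice is cartesian closed, adapted to the setting of $\pi$-tribes and their fibrant slices.

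For the forward direction of the first equivalence, assuming $\mathcal{T}$ is a $\pi$-tribe, I would define the exponential $x^z$ in $\mathcal{T}(A)$ by $x^z := \Pi_z(z^*x)$, where $z : Z \to A$ is the structure fibration and $z^*x : X \times_A Z \to Z$ is the pullback. The exponential adjunction then follows from $z^* \dashv \Pi_z$ combined with the identification of the binary product $y \times z$ in $\mathcal{T}(A)$ with $y \times_A Z$ (that is, with $z_! z^* y$). Stability of fibrations under $(-)^z$ follows from $z^*$ preserving fibrations (as a pullback) and $\Pi_z$ doing the same, which, for morphisms between fibrations, is a standard consequence of the $\pi$-tribe axioms together with the universal property of $\Pi_z$.

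For the backward direction, given a fibration $p : E \to A$ and a fibration $f : A \to B$, I would construct $\Pi_f p$ inside the fibrant slice $\mathcal{T}(B)$, viewing both $A$ (via $f$) and $E$ (via $fp$) as objects and $p : E \to A$ as a fibration in $\mathcal{T}(B)$. Form the exponentials $A^A$ and $E^A$ in $\mathcal{T}(B)$ together with the induced map $p^A : E^A \to A^A$, which is a fibration by the assumed preservation property, and then define $\Pi_f p$ as the pullback of $p^A$ along the point $\iota : B \to A^A$ transposing the identity $\mathrm{id}_A$ in $\mathcal{T}(B)$. Standard manipulations of the exponential adjunction in $\mathcal{T}(B)$ show that morphisms $y \to \Pi_f p$ over $B$ correspond naturally to sections of $p$ along $f$ parametrised by $y$, i.e.\ to morphisms $f^* y \to p$ in $\mathcal{T}(A)$, which is the universal property of the internal product.

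For the second equivalence, both implications reduce formally to the above two constructions, using that any morphism of tribes preserves pullbacks along fibrations. If $F$ preserves internal products, then $F(x^z) = F\Pi_z(z^* x) \cong \Pi_{Fz}(Fz)^*(Fx) = (Fx)^{Fz}$; conversely, if each $F(A)$ preserves exponentials, the pullback construction of $\Pi_f p$ above commutes with $F$ up to isomorphism and yields $F(\Pi_f p) \cong \Pi_{Ff}(Fp)$.

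The main obstacle is bookkeeping rather than conceptual: one must carefully identify the fibrations inside $\mathcal{T}(B)$ with those in $\mathcal{T}$, check that $p : E \to A$ really is a fibration in $\mathcal{T}(B)$ when both endpoints are equipped with their structure map to $B$, verify that the transpose $\iota : B \to A^A$ of the identity is well-defined in the fibrant slice, and keep track of the fact that $B$ is terminal in $\mathcal{T}(B)$ so that the pullback along $\iota$ really computes the "fixed point of identity" presentation of $\Pi_f p$. Once these identifications are in place, all universal properties and preservation statements follow by routine diagram chasing.
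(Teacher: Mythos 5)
Your proposal is correct and takes essentially the same route as the paper: both directions rest on the standard pullback formula expressing $\Pi_f p$ as the pullback of $E^A \to A^A$ (exponentials in the fibrant slice over $B$) along the transpose of $\mathrm{id}_A$, which is precisely the square the paper writes out, and the forward direction $x^z := \Pi_z(z^*x)$ is the usual inverse construction that the paper alludes to with "and conversely." You spell out a bit more detail, in particular the second equivalence about morphisms, but the argument and the key formula coincide with the paper's.
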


\begin{proof}
 Recall that, given a locally cartesian closed category $\mathcal{C}$, the dependent products of an arrow along another arrow can be expressed in terms of the exponentials in the slice category $\mathcal{C}_{/x}$, and conversely.
 Precisely, given two arrows $f : x \to y$ and $g : e \to x$, we have the following pullback square in $\mathcal{C}_{/y}$:

\[\begin{tikzcd}
	{\Pi_fg} && {[f,f \circ g]_{\mathcal{C}_{/y}}} \\
	\\
	{id_y} && {[f,f]_{\mathcal{C}_{/y}}}
	\arrow[from=3-1, to=3-3]
	\arrow[from=1-3, to=3-3]
	\arrow[from=1-1, to=3-1]
	\arrow[from=1-1, to=1-3]
	\arrow["\lrcorner"{anchor=center, pos=0.125}, draw=none, from=1-1, to=3-3]
\end{tikzcd}\]
where we have denoted the exponential in $\mathcal{C}_{/y}$ by $[-,-]_{\mathcal{C}_{/y}}$, and where the bottom map is the transpose of the identity arrow $f \to f$ along the adjunction defining the exponential $[f,-]_{\mathcal{C}_{/y}}$.

This formula can be used in the context of a tribe, when $f$ and $g$ are fibrations, and the resulting construction behaves as expected in the definition of a $\pi$-tribe.
 \end{proof}

\begin{lemma}
\label{strf}
 Let $F : \mathcal{T} \to \mathcal{S}$ be a morphism between $\pi$-tribes in $\mathbf{scTrb_{\pi,\sim}}$.

 Then there exists a span $\mathcal{T}' \to \mathcal{T} \times \mathcal{S}$ such that $\mathcal{T}'$ is a semi-cubical $\pi$-tribe, $\mathcal{T}' \to \mathcal{T}$ is a $\pi$-closed weak equivalence and $\mathcal{T}' \to \mathcal{S}$ is a $\pi$-closed functor, fitting in a commutative diagram:

\[\begin{tikzcd}
	& {\mathcal{T}} \\
	{\mathcal{T}} && {\mathcal{S}} \\
	& {\mathcal{T}'}
	\arrow["\sim"{description}, from=3-2, to=2-1]
	\arrow[from=3-2, to=2-3]
	\arrow["{id_{\mathcal{T}}}"', from=1-2, to=2-1]
	\arrow["F", from=1-2, to=2-3]
	\arrow["m"', from=1-2, to=3-2]
\end{tikzcd}\]
 Here $m$ is a weak equivalence; additionally, $\mathcal{T}' \to \mathcal{S}$ is a fibration, and $\mathcal{T}' \to \mathcal{T}$ is a trivial fibration.
\end{lemma}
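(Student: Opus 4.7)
The plan is to re-use verbatim the pullback construction of Lemma~\ref{strf0} and then upgrade ``preserves exponentials'' to ``$\pi$-closed'' by applying Lemma~\ref{expprod} slice-wise. Concretely, I would define $\mathcal{T}'$ by the same pullback square \eqref{pbr}. Since the pullback is taken along the isofibration $\langle\partial_0,\partial_1\rangle$, the remark before Lemma~\ref{Pspi} ensures that $\mathcal{T}'$ inherits a semi-cubical structure (with component-wise cotensors), and the entire part of the conclusion of Lemma~\ref{strf0} that does not refer to the $\pi$-structure — the tribe structure on $\mathcal{T}'$, the fact that $\mathcal{T}' \to \mathcal{S}$ is a fibration, that $\mathcal{T}' \to \mathcal{T}$ is a trivial fibration, and the existence of the weak equivalence $m : \mathcal{T} \to \mathcal{T}'$ — transfers word for word.

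It remains to show that $\mathcal{T}'$ is a $\pi$-tribe and that the two projections are $\pi$-closed. By Lemma~\ref{expprod}, this reduces to checking, for each object $A' = (a,x,v \to Fa \times x)$ of $\mathcal{T}'$, that the fibrant slice $\mathcal{T}'(A')$ admits exponentials (with the expected fibration property), and that the induced functors $\mathcal{T}'(A') \to \mathcal{T}(a)$ and $\mathcal{T}'(A') \to \mathcal{S}(x)$ preserve them. The key observation is that $\mathcal{T}'(A')$ itself fits into a pullback square of the same shape as \eqref{pbr}, with $\mathcal{T}$, $\mathcal{S}$, $P\mathcal{S}$ replaced respectively by $\mathcal{T}(a)$, $\mathcal{S}(x)$, and the fibrant slice of $P\mathcal{S}$ over $v$, and with the bottom map given by $F(a) \times \mathrm{id}_{\mathcal{S}(x)}$. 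The ``ambient'' tribe $P\mathcal{S}$ is itself a $\pi$-tribe by Lemma~\ref{Pspi} (equivalently Proposition~\ref{piP}), so its fibrant slices have exponentials computed from those in $\mathcal{S}$, and $F(a)$ preserves exponentials up to weak equivalence by the hypothesis $F \in \mathbf{scTrb_{\pi,\sim}}$ together with Remark~\ref{piclosed}. Hence the construction in Lemma~\ref{strf0} applies verbatim inside this slice: one pulls back the exponential in the slice of $P\mathcal{S}$ along the comparison $F(a^b) \to Fa^{Fb}$, and uses 2-out-of-$3$ exactly as in that proof to conclude that the resulting span has weak-equivalence legs and thus defines an object of $\mathcal{T}'(A')$ with the required universal property.

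The $\pi$-closedness of the two projections is then essentially automatic from the construction: the $\mathcal{T}(a)$-component of the exponential is, by definition, the exponential in $\mathcal{T}(a)$, while the $\mathcal{S}(x)$-component is read off from the $P\mathcal{S}(v)$-component via $\partial_0$ or $\partial_1$, both of which preserve exponentials in $P\mathcal{S}$. Invoking Lemma~\ref{expprod} once more, this translates to $\pi$-closedness of $\mathcal{T}' \to \mathcal{T}$ and $\mathcal{T}' \to \mathcal{S}$ as global morphisms of tribes. I expect the main technical friction to be the bookkeeping needed to identify $\mathcal{T}'(A')$ with a genuine pullback of slice categories of the desired shape, so that the argument of Lemma~\ref{strf0} can be re-run inside it; everything else is the obvious slice-wise reformulation of that proof.
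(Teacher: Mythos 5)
Your proposal is correct and follows essentially the same route as the paper's proof: the paper also reuses the pullback \eqref{pbr} unchanged, observes that each fibrant slice $\mathcal{T}'(A)$ fits into a pullback of slice categories of the same shape (with $\mathcal{T}(\partial_0'A) \times \mathcal{S}(\partial_1'A)$, $P\mathcal{S}(uA)$, and $\mathcal{S}(\partial_0 uA) \times \mathcal{S}(\partial_1 uA)$), constructs the exponential in $\mathcal{T}'(A)$ by the same pullback-along-a-comparison-map trick from Lemma~\ref{strf0}, and then invokes Lemma~\ref{expprod} to pass from slice-wise exponentials to the global $\pi$-tribe structure and $\pi$-closedness of the projections. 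The one bookkeeping step you flag as potential friction --- identifying $\mathcal{T}'(A')$ with a pullback of fibrant slices --- is precisely what the paper asserts ``can be checked by unfolding the definitions,'' so your assessment of where the work lies matches the paper's.
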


\begin{proof}
 
The construction is the same as that of \cref{strf0}, and the proof is also similar. We use the notation $\mathcal{T}(A)$ to denote the fibrant slice of $\mathcal{T}$ under an object $A$, as introduced in Proposition 1.4.6 of \cite{joyal2017notes}. For every object $A$ of $\mathcal{T}'$, the pullback \eqref{pbr} yields a commutative square
 
\[\begin{tikzcd}
	{\mathcal{T}'(A)} && {P\mathcal{S}(uA)} \\
	\\
	{\mathcal{T}(\partial_0'A) \times \mathcal{S}(\partial_1'A)} && {\mathcal{S}(\partial_0\circ  u A) \times \mathcal{S}(\partial_1\circ  u A)}
	\arrow["{\partial(A)}", two heads, from=1-3, to=3-3]
	\arrow["{F(\partial_0'A) \times id_{\mathcal{S}(\partial_1'A)}}"', from=3-1, to=3-3]
	\arrow["{\partial'(A)}"', two heads, from=1-1, to=3-1]
	\arrow["{u(A)}", from=1-1, to=1-3]
	\arrow["\lrcorner"{anchor=center, pos=0.125}, draw=none, from=1-1, to=3-3]
\end{tikzcd}\]
which is a pullback (this can be checked by unfolding the definitions).

As before, $\mathcal{T}'(A)$ has exponentials, and, the projections $\mathcal{T}'(A) \to \mathcal{T}(\partial_0'A)$ and $\mathcal{T}'(A) \to \mathcal{S}(\partial_1\circ  u A)$ preserve those.
To see this, let $B = (b,x, v \to Fb \times x)$ and $C = (c,y, w \to Fc \times y)$  be two objects of the fibrant slice $\mathcal{T}'(A)$. Consider the exponential $u(A)B^{u(A)C}$ in $P\mathcal{S}(uA)$, which is of the form $(z \to F(b)^{F(c)} \times x^y)$.
  We claim that the exponential $B^C$ in $\mathcal{T}'(A)$ can be defined as the tuple $S = (b^c,x^y,s \to Fb^{Fc} \times x^y)$, where $s \to Fb^{Fc} \times x^y$ is constructed by pullback in $\mathcal{S}(\partial_0\circ  u A \times \partial_1\circ  u A)$ as follows:

\begin{equation}
\begin{tikzcd}
	s && z \\
	\\
	{F(b^c) \times x^y} && {F(b)^{F(c)} \times x^y}
	\arrow[from=3-1, to=3-3]
	\arrow[two heads, from=1-3, to=3-3]
	\arrow[two heads, from=1-1, to=3-1]
	\arrow[from=1-1, to=1-3]
	\arrow["\lrcorner"{anchor=center, pos=0.125}, draw=none, from=1-1, to=3-3]
\end{tikzcd}
\end{equation}

Checking that the object so defined can be equipped with an evaluation morphism that makes it an exponential object in $\mathcal{T}'(A)$ is done just like in the \cref{strf0}. By construction, the exponentials are preserved by the projections $\mathcal{T}'(A) \to \mathcal{T}(\partial_0'A)$ and $\mathcal{T}'(A) \to \mathcal{S}(\partial_1\circ  u A)$. The following still holds: if $B \to B'$ is a fibration in $\mathcal{T}'(A)$, then the induced map $B^C \to B'^C$ is also be a fibration. Applying Lemma \ref{expprod}, this concludes the proof of the statement.
 \end{proof}

Combining \cite[Lemma 6.1]{cherradi2026flat} and \cref{strf}, we get the following result:

\begin{lemma}
\label{r_strf}
 Let $F : \mathcal{T} \to \mathcal{S}$ be a functor between $\pi$-tribes preserving internal product up to equivalence. Assume that $F$ maps pullbacks along fibrations to pullback squares that are homotopy pullbacks, that $F(*_\mathcal{T}) \to *_\mathcal{S}$ is a weak equivalence and that weak equivalences are sent to weak equivalences by $F$. Then, in the following pullback, 
\[\begin{tikzcd}[ampersand replacement=\&]
	{\mathcal{T}'} \&\& {P\mathcal{S}} \\
	\\
	{\mathcal{T} \times \mathcal{S}} \&\& {\mathcal{S} \times \mathcal{S}}
	\arrow["u", from=1-1, to=1-3]
	\arrow["{<\alpha,\beta>}"', from=1-1, to=3-1]
	\arrow["\lrcorner"{anchor=center, pos=0.125}, draw=none, from=1-1, to=3-3]
	\arrow["{<\partial_0,\partial_1>}", from=1-3, to=3-3]
	\arrow["{F \times id_{\mathcal{S}}}"', from=3-1, to=3-3]
\end{tikzcd}\]
computed in $\mathbf{Cat}$, $\mathcal{T}'$ inherits a $\pi$-tribe structure and the two projection functors $\alpha : \mathcal{T}' \to \mathcal{T}$ and $\beta : \mathcal{T'} \to \mathcal{S}$ are $\pi$-closed morphisms of tribes.
\end{lemma}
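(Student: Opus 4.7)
My plan is to follow the template of \cref{strf}: construct the pullback $\mathcal{T}'$ in $\mathbf{Cat}$ and equip it with a tribe structure whose fibrations are component-wise, then construct internal products by imitating the fibrant-slice argument of \cref{strf}.

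First, I would check that $\mathcal{T}'$ is a tribe. Since $\langle \partial_0, \partial_1 \rangle : P\mathcal{S} \to \mathcal{S} \times \mathcal{S}$ is an isofibration (being a fibration in $\mathbf{Trb}$), the tribe structure on the pullback follows from Lemma 1.4.8 of \cite{joyal2017notes} once a terminal object is exhibited. The natural candidate $(*_\mathcal{T}, *_\mathcal{S}, F*_\mathcal{T} \xleftarrow{=} F*_\mathcal{T} \xrightarrow{\sim} *_\mathcal{S})$ is a legitimate object of $\mathcal{T}'$, since $F*_\mathcal{T} \to *_\mathcal{S}$ is a fibration (every map to the terminal of a tribe is) and a weak equivalence by hypothesis, hence a trivial fibration. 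Pullbacks along fibrations are then computed componentwise thanks to $F$ preserving pullbacks.

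Next, by \cref{expprod} the $\pi$-tribe property reduces to constructing exponentials in each fibrant slice $\mathcal{T}'(A)$ with fibrations preserved in the appropriate variable. For $B = (b, x, v)$ and $C = (c, y, w)$ in $\mathcal{T}'(A)$, the exponential $u(B)^{u(C)}$ in the $\pi$-tribe $P\mathcal{S}(uA)$ (provided by \cref{piP}) has the form $z \to Fb^{Fc} \times x^y$, and I would define the candidate exponential $B^C$ exactly as in the pullback formula of \cref{strf}, setting $s$ to be the pullback of $z \to Fb^{Fc} \times x^y$ along $F(b^c) \times x^y \to Fb^{Fc} \times x^y$.

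The main obstacle is justifying that the legs of the resulting span $s \to F(b^c) \times x^y$ are trivial fibrations, for which \cref{strf} invoked the $\pi$-closedness of $F$ via \cref{piclosed} to conclude that $F(b^c) \to Fb^{Fc}$ is a weak equivalence. Under the weaker hypotheses here, I would recover this property using the data carried by the pullback itself: the trivial-fibration spans $v$ and $w$ witness $Fb \simeq x$ and $Fc \simeq y$ in $\mathcal{S}$; the $\pi$-tribe structure of $P\mathcal{S}$ then packages these into the weak equivalence linking $Fb^{Fc}$ and $x^y$ carried by $z$; and, combined with $F$ preserving pullbacks, the terminal up to weak equivalence, and weak equivalences, a two-out-of-three argument applied to the commutative squares from \cref{strf} supplies the weak equivalence on the relevant legs. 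The fibration property for these legs comes from the standard base-change argument, as in \cref{strf}, so the legs are indeed trivial fibrations. The remainder of the proof—verifying the universal property of the candidate $B^C$ and the preservation of fibrations in the left variable—then copies \cref{strf} verbatim, and \cref{expprod} delivers the $\pi$-tribe structure on $\mathcal{T}'$.
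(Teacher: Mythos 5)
The proposal misses the crux of the lemma. The whole point of relaxing the hypotheses on $F$ --- it preserves pullbacks, weak equivalences, and the terminal up to weak equivalence, but it is \emph{not} assumed to preserve fibrations --- is that the pullback $\mathcal{T}'$ in $\mathbf{Cat}$ can no longer be treated as a pullback of tribes with a componentwise class of fibrations, which is precisely what you invoke. Lemma 1.4.8 of \cite{joyal2017notes} requires the cospan to consist of clan morphisms, and $F \times id_\mathcal{S}$ is not one. Concretely, for a morphism $f$ in $\mathcal{T}'$ with $\alpha(f)$ and $\beta(f)$ fibrations, the $P\mathcal{S}$-component $u(f)$ has underlying leg $F(\alpha(f))$, which need not be a fibration; hence $u(f)$ need not be a Reedy fibration in $P\mathcal{S}$ even when the two other components are fibrations, and the componentwise class of fibrations would fail the factorization axiom.

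What the paper actually does is introduce a \emph{modified} class of fibrations on $\mathcal{T}'$: $f$ is declared a fibration when $\alpha(f)$ and $\beta(f)$ are fibrations \emph{and} the gap map from the apex $X$ of $u(\mathrm{dom}\, f)$ to the pullback of the apex $Y$ of $u(\mathrm{cod}\, f)$ along $F(x_\mathcal{T}) \times x_\mathcal{S} \to F(y_\mathcal{T}) \times y_\mathcal{S}$ is a fibration in $\mathcal{S}$. The paper flags this explicitly: the gap-map condition reduces to Reedy fibrancy in $P\mathcal{S}$ only if $F$ preserves fibrations. With this weaker class it then verifies the tribe axioms one by one: existence of a terminal object (the same span you propose), pullbacks along fibrations, the factorization of any morphism as a componentwise anodyne map followed by one of these fibrations --- where preservation of weak equivalences by $F$ is used crucially to see that the intermediate object is again a span of trivial fibrations --- and the identification of anodyne maps with the componentwise anodyne ones via a retract argument. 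None of these verifications appear in your plan, and they are not automatic. The same omission then propagates to your exponential argument: since $u$ need not send fibrations of $\mathcal{T}'$ to Reedy fibrations of $P\mathcal{S}$, the objects $u(B)$ and $u(C)$ need not lie in the fibrant slice $P\mathcal{S}(uA)$, so forming $u(B)^{u(C)}$ "in $P\mathcal{S}(uA)$" requires more care than a verbatim copy of \cref{strf}.
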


The rigidification results from the \cite{cherradi2026flat} can now be specialized to the context of $\pi$-tribes.

\begin{proposition}
\label{rlan_cpi}
 Consider a diagram in $\mathbf{scTrb_{\pi,\sim}}$
\[\begin{tikzcd}[ampersand replacement=\&]
	{\mathcal{C}} \&\& {\mathcal{D}} \\
	{\mathcal{C}'}
	\arrow["F", from=1-1, to=1-3]
	\arrow["K"', from=1-1, to=2-1]
\end{tikzcd}\]
where $\mathcal{C'}$ and $\mathcal{D}$ are $\pi$-tribes, and where $K$ is a weak equivalence. Then there exists a span 
\[\begin{tikzcd}[ampersand replacement=\&]
	\& {\mathcal{C}''} \\
	{\mathcal{C}'} \&\& {\mathcal{D}}
	\arrow["{K'}"', from=1-2, to=2-1]
	\arrow["{F'}", from=1-2, to=2-3]
\end{tikzcd}\]
in $\mathbf{scTrb_{\pi}}$ representing the same morphism in the homotopy category $\mathbf{Ho}(\mathbf{scTrb_{\pi,\sim}})$.
\end{proposition}

\begin{proof}
 Applying \cite[Proposition 6.4]{cherradi2026flat} to the diagram
\[\begin{tikzcd}[ampersand replacement=\&]
	{\mathcal{C}} \& {\mathcal{D}} \& {\mathcal{P}(D)} \\
	{\mathcal{C}'}
	\arrow["F", from=1-1, to=1-2]
	\arrow["K"', from=1-1, to=2-1]
	\arrow["{\mathbf{y}}", from=1-2, to=1-3]
\end{tikzcd}\]
we obtain an extension, which we turn into a span
\[\begin{tikzcd}[ampersand replacement=\&]
	\& {\mathcal{C}''_0} \\
	{\mathcal{C}'} \&\& {\mathcal{P}(D)}
	\arrow[from=1-2, to=2-1]
	\arrow[from=1-2, to=2-3]
\end{tikzcd}\]
representing the left Kan extension of $\mathbf{Ho}_\infty(\mathbf{y} \circ F)$ along $\mathbf{Ho}_\infty(K)$. Since $K$ is a weak equivalence, the natural transformation involved in the left Kan extension is invertible, and, hence, $\mathcal{C''}_0 \to \mathcal{P}(\mathcal{D})$ factors through $\mathbf{R}(\mathcal{D})$. Moreover, $\mathcal{C}''_0$ is a $\pi$-tribe and the span is composed of $\pi$-closed morphisms of tribes since it has been constructed applying the construction from \cref{r_strf} (observing that $\mathbf{R}(\mathcal{C})$ is a $\pi$-tribe as soon as $\mathcal{C}$ is equivalent to a $\pi$-tribe). Forming the following pullback then yields a span in $\mathbf{Trb_{\pi}}$:
\[\begin{tikzcd}[ampersand replacement=\&]
	\&\& {\mathcal{C}''} \\
	\& {\mathcal{C}''_0} \&\& {\mathbf{Q}(\mathcal{D})} \\
	{\mathcal{C}'} \&\& {\mathbf{R}(\mathcal{D})} \&\& {\mathcal{D}}
	\arrow["\sim"{description}, two heads, from=1-3, to=2-2]
	\arrow[from=1-3, to=2-4]
	\arrow["\sim"{description}, from=2-2, to=3-1]
	\arrow[from=2-2, to=3-3]
	\arrow["\sim"{description}, two heads, from=2-4, to=3-3]
	\arrow[from=2-4, to=3-5]
\end{tikzcd}\]
Using that $\mathbf{scTrb_{\pi}} \to \mathbf{Trb_{\pi}}$ is a DK-equivalence, we replace this span by a span in $\mathbf{scTrb_{\pi}}$, proving the desired result.
\end{proof}

\section{Proof of the conjecture}

At this point, we have introduced most of the technical machinery that will enable us to deduce the part of the internal language conjecture, Conjecture \ref{ilcj}, that we seek to prove in this document. We now provide a quick overview of the argument.

We begin by observing that there is a factorization $$\catname{CompCat}_{\Sigma, \Pi_\text{ext}, \text{Id}} \to \catname{Trb_\pi} \to \catname{QCat}_{lcc}$$ through the category of $\pi$-tribes $\catname{Trb_\pi}$. It is enough to show that both functors are DK-equivalences.
Furthermore, Cisinski's theorem gives a powerful characterization of DK-equivalences between fibration categories. Our strategy is to replace the relative categories we are studying by fibration categories, where the "replacement" functor is directly shown to be a DK-equivalence. Namely, we replace the relative category of tribes $\catname{Trb}$ by $\catname{scTrb}$, and the category $\catname{Trb_\pi}$ by $\catname{scTrb_\pi^p}$.
There are two main steps to show that  $\catname{Trb_\pi} \to \catname{QCat}_{lcc}$ is a DK-equivalence. Firstly, we want to take advantage of the fact that $\catname{Trb} \to \catname{QCat}_{lex}$ is known to be a DK-equivalence, so that it induces equivalences of hom-spaces, together with our rigidification procedure from \cite{cherradi2022interpreting}, which constructs, from a given locally cartesian closed quasicategory, a corresponding $\pi$-tribe (Theorem 2.4 there). Secondly, we want to rigidify the morphisms between such $\pi$-tribes: we move from a setting where the internal product is preserved up to equivalence, to one where it is preserved up to isomorphism. This is notably where the rigidification tool, Lemma \ref{strf}, as well as \cref{rlan_cpi}

\subsection{The DK-equivalences between categories of tribes and their semi-cubical counterparts}

The proposition below, which connects relative categories of tribes with their semi-cubical counterpart, relies on the same argument as Proposition 3.12 in \cite{ks2019internal}, which was used to establish the DK-equivalence $\mathbf{scTrb} \to \mathbf{Trb}$. We recall the definitions and arguments relevant to the proof in the rest of this section.

\begin{definition}[\cite{ks2019internal}, Section 3]
 Let $\mathcal{T}$ be a tribe. The category of frames $\text{Fr}\mathcal{T}$ on $\mathcal{T}$ is defined as the category of homotopical diagrams in $\mathcal{T}$ of shape $\Delta_\sharp^{op}$, where $\Delta_\sharp$ is the homotopical category whose underlying category is the subcategory of $\Delta$ spanned by the monomorphisms (i.e, the semi-simplex category), and where all maps are taken to be weak equivalences.
 \end{definition}
 
	Kapulkin and Szumilo established in Theorem 3.7 of \cite{ks2019internal} that the category $\text{Fr}\mathcal{T}$ can be endowed with the structure of a semi-simplicial tribe, and that this construction induces a functor $\text{Fr} : \mathcal{T} \mapsto \text{Fr}\mathcal{T}$ from $\mathbf{Trb}$ to $\mathbf{scTrb}$. Moreover, they observed that, if $\mathcal{T}$ is already a semi-simplicial tribe, then any object $x$ induces a canonical frame $x^{\Delta_\sharp[-]}$ defined on objects by the cotensors $x^{\Delta^n}$, and similarly on morphism by cotensoring with the face maps of $\Delta_\sharp$. This defines a functor $\text{cfr}\mathcal{T} : \mathcal{T} \to \text{scFr}(\mathcal{T})$. However, this construction is only pseudonatural in $\mathcal{T}$; consequently, it cannot be used directly to provide a natural transformation between $\text{Fr} \circ i$ and $id_{\mathbf{scTrb}}$,  where $i : \mathbf{scTrb} \to \mathbf{Trb}$ is the inclusion. Therefore, to show that $i :  \mathbf{scTrb} \to \mathbf{Trb}$ is a DK-equivalence inverse to $\text{Fr}$, the authors of \cite{ks2019internal} introduce a further step: they define $\widehat{\text{Fr}} \mathcal{T}$ as (a variation of) the gluing construction  along the functor $\text{cfr}\mathcal{T}:\mathcal{T} \to \text{scFr}(\mathcal{T})$, that comes equipped with two projections $\widehat{\text{Fr}}\mathcal{T} \to \text{Fr}\mathcal{T}$ and $\widehat{\text{Fr}}\mathcal{T} \to \mathcal{T}$, now natural in $\mathcal{T}$.
 
 For our purposes, the construction $\widehat{\text{Fr}}$ is unfortunately insufficient because the projection $\widehat{\text{Fr}}\mathcal{T} \to \text{Fr}\mathcal{T}$ is not $\pi$-closed, so we introduce a variation $\tilde{\text{Fr}}\mathcal{T}$ of the construction such that the projection $\tilde{\text{Fr}}\mathcal{T} \to \text{Fr}\mathcal{T}$ is $\pi$-closed:
 \begin{definition}
 For $\mathcal{T}$ a semi-cubical tribe, we can consider the following pullback in $\mathbf{scTrb}$:

\[\begin{tikzcd}
	{\tilde{\text{Fr}}\mathcal{T} } && {P\mathcal{\text{Fr}\mathcal{T} }} \\
	\\
	{\mathcal{T} \times \text{Fr}\mathcal{T} } && {\text{Fr}\mathcal{T}  \times \text{Fr}\mathcal{T} }
	\arrow[from=1-1, to=1-3]
	\arrow[two heads, from=1-1, to=3-1]
	\arrow["\lrcorner"{anchor=center, pos=0.125}, draw=none, from=1-1, to=3-3]
	\arrow[two heads, from=1-3, to=3-3]
	\arrow["{\text{cfr} \times id_{\text{Fr}\mathcal{T} }}"', from=3-1, to=3-3]
\end{tikzcd}\]
This is an instance of \cref{strf}, since the canonical frame functor $\text{cfr}$ preserves internal product up to weak equivalence (as it is a weak equivalence between tribes, as remarked in \cite{ks2019internal}).
\end{definition}

Just like $\text{Fr}\mathcal{T}$, the category $\tilde{\text{Fr}}\mathcal{T}$ enjoys a semi-cubical tribe structure, and this construction also induces a functor $\tilde{\text{Fr}} : \mathcal{T} \mapsto \tilde{\text{Fr}}\mathcal{T}$ from $\mathbf{scTrb}$ to itself, as we prove in the next lemma. This functor comes equipped with two natural transformations $\tilde{\text{Fr}} \to id_\mathbf{scTrb}$ and $\tilde{\text{Fr}} \to \text{Fr} \circ i$, given by the projections, whose components are now $\pi$-closed morphisms of semi-cubical tribes and moreover weak equivalences (because $\text{cfr}\mathcal{T} : \mathcal{T} \to \text{scFr}(\mathcal{T})$ is a weak equivalence). 

\begin{lemma}
\label{frtilde}
 The mapping $\mathcal{T} \mapsto \tilde{\text{Fr}}\mathcal{T}$ induces a functor, and the two projections $\tilde{\text{Fr}} \to id_\mathbf{scTrb}$ and $\tilde{\text{Fr}} \to \text{Fr} \circ i$ are (strictly) natural in $\mathcal{T}$.
\end{lemma}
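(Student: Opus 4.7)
The plan is to unfold the pullback defining $\tilde{\text{Fr}}\mathcal{T}$ explicitly and define the action on morphisms by hand, using the pseudonaturality of $\text{cfr}$. By \cref{cpo}, an object of $P\text{Fr}\mathcal{T}$ is a span of trivial fibrations, so by the definition of the pullback an object of $\tilde{\text{Fr}}\mathcal{T}$ is a tuple $(x, F, Y \twoheadrightarrow \text{cfr}_\mathcal{T}(x) \times F)$ where $x \in \mathcal{T}$, $F \in \text{Fr}\mathcal{T}$ and both legs $Y \twoheadrightarrow \text{cfr}_\mathcal{T}(x)$, $Y \twoheadrightarrow F$ are trivial fibrations. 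A morphism in $\tilde{\text{Fr}}\mathcal{T}$ is a triple $(f : x \to x', g : F \to F', h : Y \to Y')$ compatible with the structure maps.

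Given a semi-cubical morphism of tribes $\phi : \mathcal{T} \to \mathcal{S}$, the pseudonaturality of $\text{cfr}$ supplies a canonical natural isomorphism $\alpha_\phi : \text{Fr}(\phi) \circ \text{cfr}_\mathcal{T} \xrightarrow{\sim} \text{cfr}_\mathcal{S} \circ \phi$. I would then define $\tilde{\text{Fr}}(\phi)$ on an object $(x,F,Y)$ to be the tuple $(\phi(x), \text{Fr}(\phi)(F), Y')$, where $Y'$ is obtained from $P\text{Fr}(\phi)(Y)$ by transporting along the isomorphism $\alpha_\phi(x) \times \text{id}_{\text{Fr}(\phi)F}$, namely as the composite
$$P\text{Fr}(\phi)(Y) \twoheadrightarrow \text{Fr}(\phi)\text{cfr}_\mathcal{T}(x) \times \text{Fr}(\phi)(F) \xrightarrow{\alpha_\phi(x) \times \text{id}} \text{cfr}_\mathcal{S}(\phi x) \times \text{Fr}(\phi)(F).$$
Because $\alpha_\phi(x)$ is an isomorphism and $\text{Fr}(\phi)$ preserves trivial fibrations, the resulting two legs are still trivial fibrations, so $Y'$ is a legitimate object of $P\text{Fr}\mathcal{S}$, and hence the tuple is an object of $\tilde{\text{Fr}}\mathcal{S}$. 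The action on morphisms is defined similarly, applying $\phi$, $\text{Fr}(\phi)$ and $P\text{Fr}(\phi)$ componentwise and composing with the isomorphism $\alpha_\phi$ on the $P\text{Fr}\mathcal{S}$-component.

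Functoriality then reduces to the cocycle/normalization conditions of the pseudonatural transformation $\text{cfr}$: since $\alpha_{\text{id}_\mathcal{T}} = \text{id}$ and $\alpha_{\psi \circ \phi} = (\alpha_\psi \cdot \phi) \circ (\text{Fr}(\psi) \cdot \alpha_\phi)$, one gets $\tilde{\text{Fr}}(\text{id}_\mathcal{T}) = \text{id}_{\tilde{\text{Fr}}\mathcal{T}}$ and $\tilde{\text{Fr}}(\psi \circ \phi) = \tilde{\text{Fr}}(\psi) \circ \tilde{\text{Fr}}(\phi)$ on the nose. This is the only subtle point: a direct appeal to functoriality of pullbacks in $\mathbf{scTrb}$ fails precisely because $\text{cfr}$ is not strictly natural, so the coherence of $\alpha$ has to do the work that strict naturality would otherwise do. Strict naturality of the two projections is then immediate from the construction: the projection $\tilde{\text{Fr}}\mathcal{T} \to \mathcal{T}$, $(x,F,Y) \mapsto x$, is sent by $\tilde{\text{Fr}}(\phi)$ to $\phi(x)$ without involving $\alpha_\phi$, and similarly for the projection $\tilde{\text{Fr}}\mathcal{T} \to \text{Fr}\mathcal{T}$, $(x,F,Y) \mapsto F$, which is sent to $\text{Fr}(\phi)(F)$. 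The main obstacle is thus just keeping book of $\alpha_\phi$ and checking the cocycle condition carefully; everything else is a routine verification on the pullback presentation.
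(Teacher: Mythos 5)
Your proof is correct and takes essentially the same route as the paper: both unfold the pullback, use the pseudonaturality isomorphism $\alpha_\phi$ to correct the failure of strict naturality of $\text{cfr}$, realize this as a canonical lift against the isofibration $P\text{Fr}\mathcal{S} \to \text{Fr}\mathcal{S} \times \text{Fr}\mathcal{S}$ (keeping the apex of the span fixed and post-composing the legs), and reduce functoriality to the cocycle/normalization identities for $\alpha$. You make the cocycle argument slightly more explicit than the paper, which simply asserts that the canonical choice of lift makes the construction functorial, but the content is the same.
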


\begin{proof}
First, note that $\text{cfr} : \mathcal{T} \to \text{scFr}(\mathcal{T})$ is only pseudonatural in $\mathcal{T}$. Therefore, given a morphism $\mathcal{T} \to \mathcal{T}'$, we have the following diagram:

\[\begin{tikzcd}[ampersand replacement=\&]
	{\tilde{\text{Fr}}\mathcal{T} } \&\& {P\mathcal{\text{Fr}\mathcal{T} }} \\
	\&\&\& {P\mathcal{\text{Fr}\mathcal{S} }} \\
	{\mathcal{T} \times \text{Fr}\mathcal{T} } \&\& {\text{Fr}\mathcal{T}  \times \text{Fr}\mathcal{T} } \\
	\& {\mathcal{S} \times \text{Fr}\mathcal{S} } \&\& {\text{Fr}\mathcal{S}  \times \text{Fr}\mathcal{S} }
	\arrow[from=1-1, to=1-3]
	\arrow[""{name=0, anchor=center, inner sep=0}, curve={height=12pt}, dashed, from=1-1, to=2-4]
	\arrow[two heads, from=1-1, to=3-1]
	\arrow["\lrcorner"{anchor=center, pos=0.125}, draw=none, from=1-1, to=3-3]
	\arrow[""{name=1, anchor=center, inner sep=0}, from=1-3, to=2-4]
	\arrow[two heads, from=1-3, to=3-3]
	\arrow[from=2-4, to=4-4]
	\arrow[""{name=2, anchor=center, inner sep=0}, "{\text{cfr} \mathcal{T}\times id_{\text{Fr}\mathcal{T} }}", from=3-1, to=3-3]
	\arrow[from=3-1, to=4-2]
	\arrow[from=3-3, to=4-4]
	\arrow[""{name=3, anchor=center, inner sep=0}, "{\text{cfr}\mathcal{S} \times id_{\text{Fr}\mathcal{S} }}"', from=4-2, to=4-4]
	\arrow["\simeq"{description, pos=0.2}, shift left=5, draw=none, from=0, to=1]
	\arrow["\simeq"{description}, draw=none, from=2, to=3]
\end{tikzcd}\]
Here, we can lift the bottom isomorphism against the isofibration $P\text{Fr}\mathcal{S} \to \text{Fr}\mathcal{S}  \times \text{Fr}\mathcal{S}$ to get the one in the top triangle. Actually, to ensure functoriality, the choice for the lift  of an isomorphism $(\alpha,\beta)$ in $\text{Fr}\mathcal{S}  \times \text{Fr}\mathcal{S}$ is made as follows:

\[\begin{tikzcd}[ampersand replacement=\&]
	\& x \\
	\\
	y \&\& z \&\& x \\
	\\
	\&\&\& {y'} \&\& {z'}
	\arrow["f"', from=1-2, to=3-1]
	\arrow["g", from=1-2, to=3-3]
	\arrow["{id_x}", from=1-2, to=3-5]
	\arrow["\alpha"', from=3-1, to=5-4]
	\arrow["\beta"'{pos=0.3}, from=3-3, to=5-6]
	\arrow["{f'}"{pos=0.8}, dashed, from=3-5, to=5-4]
	\arrow["{g'}", dashed, from=3-5, to=5-6]
\end{tikzcd}\]
 Therefore, we get a mediating morphism $\tilde{\text{Fr}}\mathcal{T} \to \tilde{\text{Fr}}\mathcal{S}$ as below,

\[\begin{tikzcd}
	{\tilde{\text{Fr}}\mathcal{T} } && {P\mathcal{\text{Fr}\mathcal{T} }} \\
	& {\tilde{\text{Fr}}\mathcal{S} } && {P\mathcal{\text{Fr}\mathcal{S} }} \\
	{\mathcal{T} \times \text{Fr}\mathcal{T} } && {\text{Fr}\mathcal{T}  \times \text{Fr}\mathcal{T} } \\
	& {\mathcal{S} \times \text{Fr}\mathcal{S} } && {\text{Fr}\mathcal{S}  \times \text{Fr}\mathcal{S} }
	\arrow[""{name=0, anchor=center, inner sep=0}, from=1-1, to=1-3]
	\arrow[from=1-1, to=2-2]
	\arrow[two heads, from=1-1, to=3-1]
	\arrow["\lrcorner"{anchor=center, pos=0.125}, draw=none, from=1-1, to=3-3]
	\arrow[from=1-3, to=2-4]
	\arrow[two heads, from=1-3, to=3-3]
	\arrow[""{name=1, anchor=center, inner sep=0}, from=2-2, to=2-4]
	\arrow[from=2-2, to=4-2]
	\arrow["\lrcorner"{anchor=center, pos=0.125}, draw=none, from=2-2, to=4-4]
	\arrow[from=2-4, to=4-4]
	\arrow[""{name=2, anchor=center, inner sep=0}, "{\text{cfr} \mathcal{T}\times id_{\text{Fr}\mathcal{T} }}", from=3-1, to=3-3]
	\arrow[from=3-1, to=4-2]
	\arrow[from=3-3, to=4-4]
	\arrow[""{name=3, anchor=center, inner sep=0}, "{\text{cfr}\mathcal{S} \times id_{\text{Fr}\mathcal{S} }}"', from=4-2, to=4-4]
	\arrow["\simeq"{description}, draw=none, from=0, to=1]
	\arrow["\simeq"{description}, draw=none, from=2, to=3]
\end{tikzcd}\]
 where the square on the left commutes exactly (so that the two projections are natural in $\mathcal{T}$), and is functorial (because of the canonical choices for the lift against the isofibrations of the form $P\mathcal{\text{Fr}\mathcal{T} } \to \text{Fr}\mathcal{T}  \times \text{Fr}\mathcal{T}$ made above).
 \end{proof}

 These constructions are compatible with the $\pi$-tribe structures:
\begin{proposition}
\label{piframe}
 If $\mathcal{T}$ is a $\pi$-tribe, then $\text{Fr}\mathcal{T}$ is a semi-cubical $\pi$-tribe. Likewise, if $\mathcal{T}$ is a semi-cubical $\pi$-tribe, then so is $\tilde{\text{Fr}}\mathcal{T}$. Moreover, if $\mathcal{T} \to \mathcal{S}$ is $\pi$-closed, then so is the canonical functor $\tilde{\text{Fr}}\mathcal{T} \to \tilde{\text{Fr}}\mathcal{S}$.
\end{proposition}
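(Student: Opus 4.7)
The plan is to handle the three assertions in sequence, each leveraging the machinery already developed. For the first, apply \cref{piR} to $I = \Delta_\sharp^{op}$ equipped with the homotopical structure declaring every arrow a weak equivalence: since $\text{Fr}\mathcal{T}$ is by definition the category of Reedy fibrant homotopical diagrams of this shape, it coincides with $\mathcal{T}^{\Delta_\sharp^{op}}_{\mathbf{R}}$ and therefore inherits a $\pi$-tribe structure, which is compatible with the semi-cubical enrichment established in \cite{ks2019internal}.

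For the second assertion, assume $\mathcal{T}$ is a semi-cubical $\pi$-tribe. By the first assertion, $\text{Fr}\mathcal{T}$ is a $\pi$-tribe, and the canonical frame functor $\text{cfr}_\mathcal{T} : \mathcal{T} \to \text{Fr}\mathcal{T}$ is a weak equivalence of tribes, as recalled in \cite{ks2019internal}. By \cref{piclosed}, it therefore preserves internal products up to weak equivalence and defines a morphism in $\mathbf{scTrb_{\pi,\sim}}$. Invoking \cref{strf} then endows the pullback $\tilde{\text{Fr}}\mathcal{T}$ with a semi-cubical $\pi$-tribe structure for which both projections are $\pi$-closed.

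For the third assertion, let $f : \mathcal{T} \to \mathcal{S}$ be a $\pi$-closed morphism of semi-cubical $\pi$-tribes. By \cref{piR}, the induced functor $\text{Fr}(f) : \text{Fr}\mathcal{T} \to \text{Fr}\mathcal{S}$ is $\pi$-closed, and by \cref{Pspi} so is $P\text{Fr}(f)$. Tracing through the explicit construction of exponentials in each fibrant slice of $\tilde{\text{Fr}}\mathcal{T}$ from the proof of \cref{strf}, each component of the resulting object is either a strict exponential in a fibrant slice of $\mathcal{T}$ (preserved on the nose by $f$), a strict exponential in a fibrant slice of $\text{Fr}\mathcal{T}$ (preserved by $\text{Fr}(f)$), or a pullback in $\text{Fr}\mathcal{T}$ along the canonical comparison map for $\text{cfr}_\mathcal{T}$, which is preserved because $\text{Fr}(f)$ preserves pullbacks along fibrations and sends this comparison map to the analogous one in $\text{Fr}\mathcal{S}$ via the specific lift used in \cref{frtilde}. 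Applying \cref{expprod} then translates preservation of exponentials in every fibrant slice into preservation of internal products, yielding that $\tilde{\text{Fr}}(f)$ is $\pi$-closed.

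The main obstacle is the bookkeeping in this third assertion, where one must verify that the specific isofibration lift used in \cref{frtilde} to define the action of $\tilde{\text{Fr}}$ on morphisms matches, on the nose, the pullback construction of the exponential in $\tilde{\text{Fr}}\mathcal{S}$; this reduces to a direct but careful computation exploiting the strict $\pi$-closedness of $f$ and $\text{Fr}(f)$ to trivialize the comparison maps after passage through $\tilde{\text{Fr}}(f)$.
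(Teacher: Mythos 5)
Your proof follows essentially the same route as the paper: \cref{piR} for the first assertion, \cref{strf} (applied to $\text{cfr}_{\mathcal{T}}$, which is a morphism in $\mathbf{scTrb}_{\pi,\sim}$ by virtue of being a weak equivalence) for the second, and the explicit componentwise construction of exponentials from \cref{strf} for the third. You actually unpack the third assertion in more detail than the paper does (the paper says only ``by construction of the internal product in $\tilde{\text{Fr}}\mathcal{T}$''), correctly identifying the three ingredients — $\pi$-closedness of $f$, $\pi$-closedness of $\text{Fr}(f)$, and preservation of the pullback defining the $P\text{Fr}$-component — though note that ``preserved on the nose'' is stronger than what is needed or available: $\pi$-closedness only gives preservation up to the canonical isomorphism, which is what must be chased through the isofibration lift of \cref{frtilde}, and which suffices for the conclusion.
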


\begin{proof}
 We already know that $\text{Fr}\mathcal{T}$ is a semi-cubical tribe. It is also a $\pi$-tribe by \cref{piR}, which proves the first part of the proposition.
 For the second part, the category $\tilde{\text{Fr}}\mathcal{T}$, which is already known to be a semi-cubical tribe, is moreover a $\pi$-tribe by \cref{strf}. Moreover, by construction of the internal product in  $\tilde{\text{Fr}}\mathcal{T}$, in the latter lemma, the morphism $\tilde{\text{Fr}}\mathcal{T} \to \tilde{\text{Fr}}\mathcal{S}$ is $\pi$-closed as soon as $\mathcal{T} \to \mathcal{S}$ is.
\end{proof}

We are now in a position to establish the following result.
\begin{proposition}
\label{semipi}
 The following three functors are DK-equivalences:
 \begin{align*}
  \mathbf{scTrb} & \to  \mathbf{Trb}\\
  \mathbf{scTrb_\pi} &\to \mathbf{Trb_\pi}\\
  \mathbf{scTrb_{\pi,\sim}} &\to \mathbf{Trb_{\pi,\sim}}
 \end{align*}
\end{proposition}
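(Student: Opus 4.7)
The plan is to follow essentially the same strategy as Proposition 3.12 in \cite{ks2019internal}, adapted to our semi-cubical and $\pi$-tribe settings, using the tools developed just before this statement (namely \cref{frtilde} and \cref{piframe}).

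For the first functor $i : \mathbf{scTrb} \to \mathbf{Trb}$, the idea is to exhibit $\text{Fr}$ as a weak inverse. On one side, for each tribe $\mathcal{T}$ the canonical comparison $\text{cfr}_\mathcal{T} : \mathcal{T} \to \text{Fr}\,\mathcal{T}$ is a weak equivalence of tribes (this is the semi-cubical analogue of \cref{ev0_funct} applied to the opposite direction), so the pseudonatural transformation $\text{cfr} : id_\mathbf{Trb} \Rightarrow i \circ \text{Fr}$ is a pointwise weak equivalence. On the other side, the construction $\tilde{\text{Fr}}$ from \cref{frtilde} supplies a \emph{strictly} natural zig-zag of weak equivalences
\[
\text{Fr} \circ i \;\xleftarrow{\sim}\; \tilde{\text{Fr}} \;\xrightarrow{\sim}\; id_\mathbf{scTrb},
\]
whose component morphisms are weak equivalences of (semi-cubical) tribes, because the projections $\tilde{\text{Fr}}\mathcal{T} \to \mathcal{T}$ and $\tilde{\text{Fr}}\mathcal{T} \to \text{Fr}\mathcal{T}$ are trivial fibrations and weak equivalences respectively (by construction in \cref{strf} and the fact that $\text{cfr}_\mathcal{T}$ is a weak equivalence). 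Together with the pseudonatural $\text{cfr}$ on the other side, this is enough to conclude, as in \cite{ks2019internal}, that $i$ and $\text{Fr}$ induce mutually inverse equivalences on hammock localizations, hence $i$ is a DK-equivalence.

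For the second functor $\mathbf{scTrb_\pi} \to \mathbf{Trb_\pi}$, I would use \cref{piframe} to check that the same constructions restrict appropriately: $\text{Fr}$ restricts to a functor $\mathbf{Trb_\pi} \to \mathbf{scTrb_\pi}$, $\tilde{\text{Fr}}$ restricts to an endofunctor of $\mathbf{scTrb_\pi}$, and the two projections out of $\tilde{\text{Fr}}$ as well as $\text{cfr}$ are $\pi$-closed morphisms. The $\pi$-closedness of $\text{cfr}_\mathcal{T}$ follows from the fact that cotensoring by Reedy cofibrant semi-cubical sets commutes with internal products up to isomorphism (or, more abstractly, from \cref{piR} applied to the inverse category $\square_\sharp^{op}$ together with the observation that the evaluation at $[0]$ is $\pi$-closed). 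The same zig-zag argument then carries over verbatim.

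For the third functor $\mathbf{scTrb_{\pi,\sim}} \to \mathbf{Trb_{\pi,\sim}}$, the key observation is that this class of morphisms is detected at the level of the underlying quasicategories (\cref{piclosed}), hence it is stable under precomposition and postcomposition by weak equivalences of tribes. In particular, both $\text{Fr}$ and $\tilde{\text{Fr}}$ send morphisms of $\mathbf{Trb_{\pi,\sim}}$ (resp.\ $\mathbf{scTrb_{\pi,\sim}}$) to morphisms of the corresponding category, since the comparison of exponentials at every fibrant slice is controlled by a weak equivalence. Moreover, a tribe equivalent to a $\pi$-tribe has a frame which is also equivalent to a $\pi$-tribe. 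The proof then reduces to the same zig-zag as above.

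The main obstacle is ensuring the strict naturality of the zig-zag $\text{Fr} \circ i \Leftarrow \tilde{\text{Fr}} \Rightarrow id$ while simultaneously keeping all maps $\pi$-closed in the $\pi$-tribe settings; this is precisely the role of $\tilde{\text{Fr}}$, whose construction as a pullback involving $P\,\text{Fr}\,\mathcal{T}$ is compatible with the $\pi$-tribe structure by the rigidification lemma \cref{strf} combined with \cref{piR} and \cref{iotapip}. Everything else is bookkeeping along the lines of \cite{ks2019internal}.
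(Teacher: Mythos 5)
Your overall plan follows the paper's, but two concrete errors in the details turn out to matter. First, you posit a pseudonatural transformation $\text{cfr} : id_\mathbf{Trb} \Rightarrow i \circ \text{Fr}$; however, $\text{cfr}_\mathcal{T}$ is only defined when $\mathcal{T}$ already carries a semi-cubical structure (it is built from the cotensors $x^{\square_\sharp^\bullet}$), so there is no such transformation out of $id_\mathbf{Trb}$. The correct comparison on the $\mathbf{Trb}$ side is the strictly natural weak equivalence $\mathbf{ev}_0 : i \circ \text{Fr} \to id_{\mathbf{Trb}}$ supplied by \cref{ev0_funct}; pseudonaturality is an issue only on the $\mathbf{scTrb}$ side, for $\text{cfr} : id_{\mathbf{scTrb}} \Rightarrow \text{Fr} \circ i$, which is precisely what $\tilde{\text{Fr}}$ is designed to fix.

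Second, and more seriously, you assert that $\text{cfr}_\mathcal{T}$ is $\pi$-closed because cotensoring commutes with internal products up to isomorphism. This is false in general: the paper explicitly says $\text{cfr}$ preserves internal products only up to weak equivalence, and the entire purpose of the rigidification lemma \cref{strf} (and hence of $\tilde{\text{Fr}}$) is to replace the non-$\pi$-closed $\text{cfr}$ by a span of $\pi$-closed morphisms. If $\text{cfr}$ were $\pi$-closed, that apparatus would be redundant. Your paragraph on the $\pi$-case leans on this false claim. The repair is to drop $\text{cfr}$ from the list of maps that must be $\pi$-closed and instead observe that $\mathbf{ev}_0$ is $\pi$-closed (e.g.\ via the discrete opfibration case of \cref{piR}) and that the two projections out of $\tilde{\text{Fr}}$ are $\pi$-closed by \cref{piframe}; with those in place, the zig-zag argument works exactly as in the paper for all three functors.
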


\begin{proof}
 
  We give the proof for the second functor, the arguments for the other two are completely analogous. As observed in \cref{piframe}, we have functors $\text{Fr} : \mathbf{Trb_\pi} \to \mathbf{scTrb}_\pi$ and $\tilde{\text{Fr}} : \mathbf{scTrb_\pi} \to \mathbf{scTrb}_\pi$.
  We can see that the first functor is a DK-equivalence inverse to the inclusion $i_\pi : \mathbf{scTrb_\pi} \to \mathbf{Trb}_\pi$; indeed:
  \begin{itemize}
   \item Evaluation at $[0]$ induces a natural weak equivalence $i_\pi \circ \text{Fr} \to id_{\mathbf{Trb}_\pi}$ by \cite[Theorem 3.3]{cherradi2026semi}.
   \item The two projections $\tilde{\text{Fr}} \to id_{\mathbf{scTrb}_\pi}$ and $\tilde{\text{Fr}} \to \text{Fr} \circ i_\pi$ define a zig-zag of natural weak equivalence between $id_{\mathbf{scTrb}_\pi}$ and $\text{Fr} \circ i_\pi$.
  \end{itemize}
 \end{proof}

\subsection{The DK-equivalence $\catname{scTrb_\pi^p}  \to \catname{scTrb_\pi}$}

We will denote the objects and arrows of the span-shaped homotopical category $\mathbf{Sp}_w$ as shown in the diagram below.

\[\begin{tikzcd}[ampersand replacement=\&]
	\& 01 \\
	0 \&\& 1
	\arrow["{\pi_0}"', from=1-2, to=2-1]
	\arrow["{\pi_1}", from=1-2, to=2-3]
\end{tikzcd}\]
We define a functor $P_\iota : (\square_\sharp^s)^{op} \times \mathbf{Sp}_w \to (\square_\sharp^s)^{op}$
by mapping:
\begin{itemize}
 \item Any object of the form $([n],01)$ to $[n+1] = I^1 \otimes I^n$
 \item Any object of the form $([n],0)$ to $[n]$
 \item Any object of the form $([n],1)$ to $[n]$
 \item Any map of the form $(id_{[n]},\pi_0)$ to $\delta_0^{op} \otimes id_{I^n} : I^{n+1} = I^1 \otimes I^n \to I^n$
 \item Any map of the form $(id_{[n]},\pi_1)$ to $\delta_1^{op} \otimes id_{I^n} : I^{n+1} = I^1 \otimes I^n \to I^n$
\end{itemize}
The rest of the action on morphisms is clear. The following property is straightforward to verify:

\begin{lemma}
 The functor $P_\iota$ is a discrete opfibration.
\end{lemma}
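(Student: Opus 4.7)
The plan is to verify the defining property of a discrete opfibration directly: for each object $([n],s)$ in $\square_\sharp^{op} \times \mathbf{Sp}_w$ and each morphism $g : P_\iota([n],s) \to [m]$ in $\square_\sharp^{op}$, I would exhibit a unique arrow $(f,\alpha) : ([n],s) \to ([n'],s')$ in the product category with $P_\iota(f,\alpha) = g$. Every such arrow decomposes as $(f,\mathrm{id}_{s'}) \circ (\mathrm{id}_{[n]},\alpha)$, so once the second component $\alpha : s \to s'$ in $\mathbf{Sp}_w$ is chosen, the first component $f$ is forced by the equation. When $s = 0$ or $s = 1$, the only outgoing morphism of $\mathbf{Sp}_w$ at $s$ is the identity, so $\alpha$ is determined and one sets $f = g$, $n' = m$; uniqueness is immediate.

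The substantial case is $s = 01$, where $\alpha$ ranges over $\mathrm{id}_{01}, \pi_0, \pi_1$ and $P_\iota([n],01) = [n+1]$. I would reinterpret $g : [n+1] \to [m]$ as a morphism $g^{\mathrm{op}} : I^m \to I^{n+1}$ in $\square_\sharp$. Since $\square_\sharp$ is the free non-symmetric strict monoidal category generated by the two face maps, every morphism $I^m \to I^{n+1}$ corresponds bijectively to an order-preserving injection $\phi : \{1,\dots,m\} \hookrightarrow \{1,\dots,n+1\}$ together with an assignment of face values $\epsilon : \{1,\dots,n+1\} \setminus \mathrm{im}(\phi) \to \{0,1\}$. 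The argument then proceeds by trichotomy on the fate of the first coordinate of $I^{n+1}$: either it lies in $\mathrm{im}(\phi)$, in which case order-preservation forces it to equal $\phi(1)$ and $g^{\mathrm{op}}$ factors uniquely as $\mathrm{id}_{I^1} \otimes h^{\mathrm{op}}$ for some $h : [n] \to [m-1]$ (this is the lift with $\alpha = \mathrm{id}_{01}$, $n' = m-1$); or it is outside the image with face value $0$, giving a unique factorization $g^{\mathrm{op}} = (\delta_0 \otimes \mathrm{id}_{I^n}) \circ f^{\mathrm{op}}$ (the lift with $\alpha = \pi_0$); or the analogous case with value $1$, giving the lift with $\alpha = \pi_1$. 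The three cases are mutually exclusive and exhaustive, so the lift exists and is unique.

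I do not expect any serious obstacle in carrying this out. The only point requiring mild care is to confirm that $P_\iota$ is a well-defined functor, which amounts to checking that the implicit rule $P_\iota(f,\mathrm{id}_{01}) = \mathrm{id}_{I^1} \otimes f$ commutes with $P_\iota(\mathrm{id},\pi_\epsilon)$ in the way demanded by the product structure; this follows from a single application of the bifunctoriality of $\otimes$ in $\square_\sharp$, namely the identity $(\delta_\epsilon^{\mathrm{op}} \otimes \mathrm{id}_{I^{n'}}) \circ (\mathrm{id}_{I^1} \otimes f) = f \circ (\delta_\epsilon^{\mathrm{op}} \otimes \mathrm{id}_{I^n})$. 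Once this is noted, the opfibration property reduces to the purely combinatorial trichotomy above.
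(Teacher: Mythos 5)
Your proof is correct and amounts to carrying out in full the verification that the paper dispatches with ``the opfibration property is easy to check'' after merely listing the fibers. The trichotomy on the fate of the first coordinate of $I^{n+1}$ (in the image of $\phi$, or a $\delta_0$-face, or a $\delta_1$-face) is exactly the right combinatorial observation, and your use of the standard description of morphisms in $\square_\sharp$ as order-preserving injections plus face labels is accurate for the free monoidal category on two generators $\delta_0,\delta_1 : I^0 \to I^1$.
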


\begin{proof}
 The fiber above $n > 0$ is given by a three-element set, corresponding to $(n-1,01)$, $(n,0)$ and $(n,1)$. For $n = 0$, it is the two-element set given by $(0,0)$ and $(0,1)$. The opfibration property is easy to check.
\end{proof}

Given a tribe $\mathcal{T}$, $\text{scFr}(\mathcal{T})$ is a semi-cubical tribe by \cite[Theorem 3.3]{cherradi2026semi}. Therefore, we have a functor $$\iota_{\text{scFr}(\mathcal{T})} : \text{scFr}(\mathcal{T}) \to  P(\text{scFr}(\mathcal{T}))$$
obtained from the cotensor by $\square_\sharp^1$.

\begin{lemma}
 With the notation above, the functor $\iota_{\text{scFr}(\mathcal{T})}$ coincides with precomposition with $P_\iota$:
 $$P_\iota^* : \text{scFr}(\mathcal{T}) := \mathcal{T}^{(\square_\sharp^s)^{op}}_R \to  P(\text{scFr}(\mathcal{T})) := (\mathcal{T}^{(\square_\sharp^s)^{op}}_R)^{\mathbf{Sp}_w}_R$$
\end{lemma}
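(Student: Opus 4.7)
The plan is to show that both sides agree level-by-level by unfolding the cotensor formula and applying the Yoneda lemma. By the construction in the proof of \cref{cframe_category_th}, the frame $F^{\square_\sharp^1}$ at level $m$ is given by the end
\[
(F^{\square_\sharp^1})_m = \int_{[n] \in \square_\sharp} F_n^{(\square_\sharp^m \otimes \square_\sharp^1)_n}.
\]
Since the Day convolution of representables yields $\square_\sharp^m \otimes \square_\sharp^1 \simeq \square_\sharp^{m+1}$, we have $(\square_\sharp^m \otimes \square_\sharp^1)_n \simeq \square_\sharp([n], [m+1])$, and the Yoneda lemma identifies the end with $F_{m+1}$ naturally in $m$. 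This matches precisely the value of $F \circ P_\iota$ at $([m], 01) \in \square_\sharp^{op} \times \mathbf{Sp}_w$, which is $F([m+1]) = F_{m+1}$. At the endpoint objects $([m], 0)$ and $([m], 1)$, both sides evaluate to $F_m$, since $F^{\square_\sharp^0} = F$ and $P_\iota([m], i) = [m]$ for $i = 0, 1$.

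Having matched the objects, the next step is to verify agreement on morphisms. For the functoriality in $\square_\sharp^{op}$, a map $\alpha : [m'] \to [m]$ induces, via Yoneda and tensoring with $\square_\sharp^1$, the map $F(\alpha \otimes id_{I^1}) : F_{m+1} \to F_{m'+1}$, which coincides with the image of $(\alpha, id_{01})$ under $F \circ P_\iota$. For the two projections $F^{\delta_i} : F^{\square_\sharp^1} \to F^{\square_\sharp^0} = F$ arising from the face maps $\delta_i : \square_\sharp^0 \to \square_\sharp^1$, the Yoneda identification shows that at level $m$ they are given by $F$ applied to the face $\delta_i \otimes id_{I^m} : I^m \to I^{m+1}$, which is exactly $F$ applied to $P_\iota(id_{[m]}, \pi_i)$. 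Thus the entire span structure of $\iota_{\text{Fr}\mathcal{T}}(F)$ matches that of $P_\iota^*(F)$.

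The main (and essentially only) obstacle is keeping track of the non-symmetric monoidal structure on $\square_\sharp$: the cotensor formula tensors $\square_\sharp^m$ with $\square_\sharp^1$ on one side, while the convention $I^{n+1} = I^1 \otimes I^n$ used in the definition of $P_\iota$ places the new coordinate on the other. This is resolved by observing that both constructions yield canonically isomorphic frames in $\text{Fr}\mathcal{T}$, and by the naturality of the Yoneda identifications the isomorphism is also natural in $F$. Consequently $\iota_{\text{Fr}\mathcal{T}}(F) \simeq P_\iota^*(F)$ canonically as objects of $P(\text{Fr}\mathcal{T})$, and the two functors $\iota_{\text{Fr}\mathcal{T}}$ and $P_\iota^*$ coincide.
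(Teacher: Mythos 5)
Your proof follows essentially the same route as the paper's: unfold the cotensor formula from the proof of \cref{cframe_category_th}, use the Day convolution identity $\square_\sharp^m \otimes \square_\sharp^1 \cong \square_\sharp^{m+1}$, and match levels via Yoneda. You carry this out in more detail than the paper does, and in doing so you correctly surface a subtlety that the paper's terse proof glosses over: the box product on $\square_\sharp$ is not symmetric, and the cotensor formula $(K \triangleright F)_m = \int_n F_n^{(\square_\sharp^m \otimes K)_n}$ places the new coordinate on the right ($I^m \otimes I^1$), whereas $P_\iota$ as written places it on the left ($I^1 \otimes I^n$).

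The problem is in how you dispose of this mismatch. You assert that "both constructions yield canonically isomorphic frames in $\text{Fr}\mathcal{T}$, and by the naturality of the Yoneda identifications the isomorphism is also natural in $F$." This is not true. Writing $G_1 = \square_\sharp^1 \triangleright F$ and $G_2 = F \circ P_\iota(-,01)$, both have the same objects $G_i([m]) = F_{m+1}$, but the structure map for $\alpha : [m'] \to [m]$ is $F(\alpha \otimes \text{id}_{I^1})$ for $G_1$ and $F(\text{id}_{I^1} \otimes \alpha)$ for $G_2$. A natural isomorphism $\theta : G_1 \Rightarrow G_2$ that is moreover natural in $F$ (as needed for a claimed equality of functors) would have to arise by precomposition with a family of isomorphisms in $\square_\sharp$ itself; but $\square_\sharp$ is a direct category with only identity isomorphisms, so $\theta$ would be forced to be the identity, which would require $\alpha \otimes \text{id}_{I^1} = \text{id}_{I^1} \otimes \alpha$ in $\square_\sharp$ for all $\alpha$ — false already for $\alpha = \delta_0 : I^0 \to I^1$. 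So the two frames are homotopic (both weak equivalences) but not naturally isomorphic, and the Yoneda lemma cannot create the missing symmetry.

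The correct resolution is not an isomorphism argument but a notational fix: either redefine $P_\iota$ with $P_\iota([n],01) = I^n \otimes I^1$ and faces $\text{id}_{I^n} \otimes \delta_i^{op}$, or replace $\square_\sharp^m \otimes K$ by $K \otimes \square_\sharp^m$ in the cotensor formula. Either change makes the two functors literally equal, and neither affects the only property of $P_\iota$ that is used downstream (that it is a discrete opfibration, which feeds into \cref{piR}). Your proof would be complete and correct once the spurious "canonical isomorphism" claim is replaced by this convention adjustment.
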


\begin{proof}
 This follows from the definition of the Day convolution derived from the monoidal structure on $\square_\sharp^s$ since $(\square_\sharp^s)^1$ corresponds to the generating object $I^1$. For this, it is important to note that $P(\text{scFr}(\mathcal{T}))$ is isomorphic to the category of homotopical Reedy fibrant diagram from $(\square_\sharp^s)^{op} \times \mathbf{Sp}_w$ to $\mathcal{T}$, where the product is given the canonical Reedy structure (here, it is actually an inverse category). This is because the Reedy fibrancy criterion is expressed in terms of limits, and limits commute with one another.
\end{proof}

\begin{corollary}
 The cubical frame functor $$\text{Fr} : \mathbf{Trb}_\pi \to \mathbf{scTrb}_\pi$$ factors through $\mathbf{scTrb^p}_\pi$
\end{corollary}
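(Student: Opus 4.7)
The plan is to combine the preceding lemma, which identifies $\iota_{\text{Fr}\mathcal{T}}$ with the restriction functor $P_\iota^*$ along the discrete opfibration $P_\iota : \square_\sharp^{op} \times \mathbf{Sp}_w \to \square_\sharp^{op}$, with the second item of \cref{piR}, which asserts precisely that precomposition with a discrete opfibration between homotopical inverse categories (in which every arrow is a weak equivalence) yields a $\pi$-closed morphism of tribes between the corresponding categories of Reedy fibrant diagrams.

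Concretely, since $\mathbf{scTrb^p}_\pi$ is a full subcategory of $\mathbf{scTrb}_\pi$ and since $\text{Fr}$ is already known to land in $\mathbf{scTrb}_\pi$ by \cref{piframe}, it suffices to check that for every $\pi$-tribe $\mathcal{T}$ the canonical map $\iota_{\text{Fr}\mathcal{T}} : \text{Fr}\mathcal{T} \to P(\text{Fr}\mathcal{T})$ supplied by the semi-cubical structure is a $\pi$-closed morphism of tribes. First I would record that both $\square_\sharp^{op}$ and $\mathbf{Sp}_w$ are homotopical inverse categories in which every morphism is a weak equivalence, so that their product $\square_\sharp^{op} \times \mathbf{Sp}_w$ inherits the same structure; under the identification $P(\text{Fr}\mathcal{T}) \cong \mathcal{T}^{\square_\sharp^{op}\times \mathbf{Sp}_w}_{\mathbf{R}}$ coming from commutation of limits with limits (as exploited in the preceding lemma), the functor $\iota_{\text{Fr}\mathcal{T}}$ becomes exactly precomposition with the discrete opfibration $P_\iota$.

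At that point the second bullet of \cref{piR} directly applies and delivers $\pi$-closedness of $P_\iota^* = \iota_{\text{Fr}\mathcal{T}}$, which is exactly the condition defining membership in $\mathbf{scTrb^p}_\pi$. Functoriality of the factorization requires no extra work because $\mathbf{scTrb^p}_\pi \hookrightarrow \mathbf{scTrb}_\pi$ is full and $\text{Fr}$ is already functorial; the mildly delicate point, already handled by the previous lemma, is the bookkeeping needed to identify $\iota_{\text{Fr}\mathcal{T}}$ with $P_\iota^*$ (rather than, say, a map that only agrees with it up to a canonical isomorphism), and that identification is the whole content of the preceding lemma. I do not foresee any essential obstacle beyond invoking these two results in succession.
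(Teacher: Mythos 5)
Your proof is correct and takes essentially the same route as the paper: both arguments reduce to identifying $\iota_{\text{Fr}\mathcal{T}}$ with the precomposition functor $P_\iota^*$ via the preceding lemma and then invoking the second item of \cref{piR} (precomposition along a discrete opfibration of homotopical inverse categories is $\pi$-closed). You spell out a bit more of the bookkeeping (the full-subcategory remark and the identification $P(\text{Fr}\mathcal{T}) \cong \mathcal{T}^{\square_\sharp^{op}\times \mathbf{Sp}_w}_{\mathbf{R}}$) than the paper's one-line proof, but the content is the same.
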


\begin{proof}
We have shown that $\iota_{\text{scFr}(\mathcal{T})}$ coincides with $P_\iota^*$, which is a $\pi$-closed morphism of tribes by \cref{piR}, since $P_\iota$ is a discrete opfibration.
\end{proof}

\begin{proposition}
\label{dkPtop}
 The inclusions $$\mathbf{scTrb^p}_\pi \to \mathbf{scTrb}_\pi \to \mathbf{Trb}_\pi$$ are DK-equivalences.
\end{proposition}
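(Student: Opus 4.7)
The plan is to reduce to the second (already established) DK-equivalence by producing a DK-inverse to the composite $\mathbf{scTrb^p}_\pi \to \mathbf{Trb}_\pi$ and then concluding via the $2$-out-of-$3$ property for DK-equivalences.

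First I would note that by \cref{semipi}, the inclusion $i_\pi \colon \mathbf{scTrb}_\pi \to \mathbf{Trb}_\pi$ is already known to be a DK-equivalence, with DK-inverse $\text{Fr}$ and with the natural zig-zag of weak equivalences $id_{\mathbf{scTrb}_\pi} \xleftarrow{\sim} \tilde{\text{Fr}} \xrightarrow{\sim} \text{Fr}\circ i_\pi$ together with the natural weak equivalence $\mathbf{ev}_0 \colon i_\pi \circ \text{Fr} \xrightarrow{\sim} id_{\mathbf{Trb}_\pi}$ (see \cref{ev0_funct}). By the corollary preceding the statement, the cubical frame functor actually factors as $\text{Fr} \colon \mathbf{Trb}_\pi \to \mathbf{scTrb^p}_\pi$, so, writing $j \colon \mathbf{scTrb^p}_\pi \to \mathbf{scTrb}_\pi$ for the inclusion, we have $i_\pi \circ j \circ \text{Fr} = i_\pi \circ \text{Fr}$, and thus $\mathbf{ev}_0$ already exhibits $\text{Fr}$ as a right inverse up to weak equivalence of the full composite $i_\pi \circ j$.

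The main point is then to promote the natural zig-zag $id_{\mathbf{scTrb}_\pi} \xleftarrow{\sim} \tilde{\text{Fr}} \xrightarrow{\sim} \text{Fr}\circ i_\pi$ to a zig-zag of weak equivalences inside $\mathbf{scTrb^p}_\pi$. For this, I would verify that $\tilde{\text{Fr}}$ restricts to an endofunctor of $\mathbf{scTrb^p}_\pi$. Given $\mathcal{T} \in \mathbf{scTrb^p}_\pi$, the construction presents $\tilde{\text{Fr}}\mathcal{T}$ as a pullback in $\mathbf{scTrb}_\pi$,
\[
\tilde{\text{Fr}}\mathcal{T} \;=\; (\mathcal{T} \times \text{Fr}\mathcal{T}) \times_{\text{Fr}\mathcal{T} \times \text{Fr}\mathcal{T}} P\text{Fr}\mathcal{T},
\]
along the fibration $P\text{Fr}\mathcal{T} \to \text{Fr}\mathcal{T} \times \text{Fr}\mathcal{T}$. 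Since cotensors in such a pullback are computed componentwise, the canonical map $\iota_{\tilde{\text{Fr}}\mathcal{T}} \colon \tilde{\text{Fr}}\mathcal{T} \to P\tilde{\text{Fr}}\mathcal{T}$ is induced from the componentwise maps $\iota_\mathcal{T}$, $\iota_{\text{Fr}\mathcal{T}}$ and $\iota_{P\text{Fr}\mathcal{T}}$. The first is $\pi$-closed by hypothesis on $\mathcal{T}$, the second by the preceding corollary (applied to $i_\pi\mathcal{T}$), and the third by \cref{iotapip}. Exactly as in the proof that $\mathbf{scTrb^p}_\pi$ is closed under pullbacks along fibrations, this forces $\iota_{\tilde{\text{Fr}}\mathcal{T}}$ to be $\pi$-closed, so $\tilde{\text{Fr}}\mathcal{T} \in \mathbf{scTrb^p}_\pi$. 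The naturality of the two projections (\cref{frtilde}) is preserved by this restriction.

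With $\tilde{\text{Fr}}$ restricted to $\mathbf{scTrb^p}_\pi$, we obtain a natural zig-zag of weak equivalences $id_{\mathbf{scTrb^p}_\pi} \xleftarrow{\sim} \tilde{\text{Fr}} \xrightarrow{\sim} \text{Fr} \circ (i_\pi \circ j)$, which, together with $\mathbf{ev}_0 \colon (i_\pi \circ j) \circ \text{Fr} \xrightarrow{\sim} id_{\mathbf{Trb}_\pi}$, exhibits $\text{Fr}$ as a DK-inverse to $i_\pi \circ j$. Hence the composite $\mathbf{scTrb^p}_\pi \to \mathbf{Trb}_\pi$ is a DK-equivalence. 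Since $i_\pi$ is already a DK-equivalence by \cref{semipi}, the $2$-out-of-$3$ property for DK-equivalences implies that $j \colon \mathbf{scTrb^p}_\pi \to \mathbf{scTrb}_\pi$ is a DK-equivalence as well, completing the proof. The main subtlety, and essentially the only nontrivial verification, is the componentwise argument showing that $\tilde{\text{Fr}}$ preserves the class $\mathbf{scTrb^p}_\pi$; everything else is a direct transcription of the inputs already assembled.
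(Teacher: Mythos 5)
Your proof is correct and takes essentially the same route as the paper: refine the proof of \cref{semipi} using the corollary that $\text{Fr}$ factors through $\mathbf{scTrb^p}_\pi$, then finish with the $2$-out-of-$3$ property. The paper leaves the refinement implicit, while you make explicit the one nontrivial verification it requires — that $\tilde{\text{Fr}}$ restricts to an endofunctor of $\mathbf{scTrb^p}_\pi$, via the componentwise cotensor argument (using $\iota_\mathcal{T}$, $\iota_{\text{Fr}\mathcal{T}}$, and \cref{iotapip}) — which is precisely the content behind the phrase "can be refined."
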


\begin{proof}
 Given the corollary above, the proof of \cref{semipi} can be refined to see that $\mathbf{scTrb^p}_\pi \to \mathbf{Trb}_\pi$ is a DK-equivalence. The rest follows by the $2$-out-of-$3$ property.
\end{proof}

\subsection{Preservation of the internal product: from up-to-equivalence to up-to-isomorphism}

Given a tribe $\mathcal{C}$, we use the notation $\mathbf{R}\mathcal{C}$ from \cite{ks2019internal} to denote the full subcategory of $\mathbf{SSet}^{L^H\mathcal{C}^{op}}$ spanned by the essentially representable simplicial presheaves that are fibrant. This category can be equipped with a canonical tribe structure (Theorem 6.10 in \cite{ks2019internal}).

\begin{lemma}
\label{pirc}
 Suppose that $\mathcal{C}$ is a tribe that is equivalent (i.e., connected by a zig-zag of weak equivalences of tribes) to a $\pi$-tribe $\mathcal{C}'$.
 Then $R\mathcal{C}$ is a $\pi$-tribe. 
\end{lemma}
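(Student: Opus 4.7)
The plan is to transfer the $\pi$-tribe structure from the $\pi$-tribe $\mathcal{C}'$ through the zig-zag of weak equivalences to $R\mathcal{C}$, exploiting the fact that $R\mathcal{C}$ sits inside the ambient simplicial presheaf category $\mathbf{SSet}^{L^H\mathcal{C}^{op}}$, which is locally cartesian closed (as a presheaf category). The key idea is that dependent products for fibrations between essentially representable objects can be computed in this ambient category, and they remain essentially representable because the situation transports along the equivalence to $R\mathcal{C}'$, where representables are closed under dependent products up to weak equivalence.

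First, I would recall from \cite{ks2019internal} that $R$ is homotopy invariant: a weak equivalence of tribes $\mathcal{C} \to \mathcal{D}$ induces a weak equivalence $R\mathcal{C} \to R\mathcal{D}$ (this is built into the construction, as $R$ factors through the simplicial localization up to equivalence). Hence the zig-zag of weak equivalences $\mathcal{C} \leftarrow \cdots \to \mathcal{C}'$ lifts to a zig-zag of weak equivalences of tribes $R\mathcal{C} \leftarrow \cdots \to R\mathcal{C}'$. In particular, $\mathbf{Ho}_\infty(R\mathcal{C}) \simeq \mathbf{Ho}_\infty(R\mathcal{C}')$, which is locally cartesian closed by Remark \ref{eqpitribe}.

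Second, I would handle the case where $\mathcal{C}$ itself is a $\pi$-tribe: here, the Yoneda embedding $\mathbf{y}: \mathcal{C} \to R\mathcal{C}$ is a weak equivalence of tribes, and given fibrations $p: E \to A$ and $f: A \to B$ in $R\mathcal{C}$, one chooses representables $E_0, A_0, B_0 \in \mathcal{C}$ together with weak equivalences $\mathbf{y}(E_0) \simeq E$, $\mathbf{y}(A_0) \simeq A$, $\mathbf{y}(B_0) \simeq B$, and lifts $p, f$ (up to homotopy) to fibrations $p_0, f_0$ in $\mathcal{C}$. Then $\mathbf{y}(\Pi_{f_0} p_0)$ provides an essentially representable candidate, and it is weakly equivalent to the dependent product computed in the ambient presheaf category. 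One verifies that this construction yields a fibration, and preservation of anodyne maps by $\Pi_f$ follows from the corresponding property for $\Pi_{f_0}$ in $\mathcal{C}$.

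Third, for the general case, since $\mathcal{C}$ is equivalent to $\mathcal{C}'$ by a zig-zag of weak equivalences, the induced zig-zag $R\mathcal{C} \leftrightarrow R\mathcal{C}'$ gives an equivalence between the respective categories of essentially representable presheaves. Given fibrations $p: E \to A$ and $f: A \to B$ in $R\mathcal{C}$, their images along the zig-zag land in $R\mathcal{C}'$ where the previous paragraph applies, and lifting back provides the required dependent product in $R\mathcal{C}$. The main obstacle here is keeping track of the fibrancy data along the zig-zag — the zig-zag consists of weak equivalences, not isomorphisms, so one must use the ambient model structure on $\mathbf{SSet}^{L^H\mathcal{C}^{op}}$ to produce honest representatives. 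I would address this by invoking the fact that the pullback functors $f^*$ in the presheaf category preserve fibrations and admit strict right adjoints $\Pi_f$, and that essential representability is a property stable under weak equivalence; thus the object $\Pi_f p$ formed in the presheaf category automatically lies in $R\mathcal{C}$ and defines the internal product needed to promote $R\mathcal{C}$ to a $\pi$-tribe.
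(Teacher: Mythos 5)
Your overall strategy — form dependent products in the ambient locally cartesian closed presheaf category $\mathbf{SSet}^{L^H\mathcal{C}^{op}}$ and then argue that the result is essentially representable — is the same as the paper's. However, there is a genuine gap at the decisive step: you assert that ``essential representability is a property stable under weak equivalence; thus the object $\Pi_f p$ formed in the presheaf category automatically lies in $R\mathcal{C}$,'' but this does not follow. To conclude that $\Pi_f p$ is essentially representable you must first exhibit a weak equivalence from it to a representable, and your proposal never produces one. The stability of essential representability under weak equivalence is of course true, but you have nothing to which you can apply it. The paper's proof supplies exactly the missing ingredient: it observes that $\mathbf{Ho}(R\mathcal{C}) \simeq \mathbf{Ho}(\mathcal{C}) \simeq \mathbf{Ho}(\mathcal{C}')$, that the last category is cartesian closed because $\mathcal{C}'$ is a $\pi$-tribe, chooses $c \in \mathcal{C}$ corresponding to $(\alpha a)^{\alpha b}$ under this equivalence, and then uses the \emph{full} inclusion $\mathbf{Ho}(R\mathcal{C}) \hookrightarrow \mathbf{Ho}(P\mathcal{C})$ to identify the exponential $A^B$ computed in the ambient $\pi$-tribe $P\mathcal{C}$ with the image of $c$ at the homotopy-category level. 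This chain of identifications is what establishes essential representability, and it does so without ever treating the case where $\mathcal{C}$ is a $\pi$-tribe separately.

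A second, related, worry concerns your third paragraph, where you propose to ``push fibrations along the zig-zag into $R\mathcal{C}'$ and lift the dependent product back.'' This cannot work as stated: a weak equivalence of tribes is not $\pi$-closed in general, so it transports internal products only up to weak equivalence, not up to isomorphism — indeed this is precisely the delicate distinction that the surrounding sections of the paper are engineered to handle, via the rigidification lemma and the categories $\catname{scTrb_{\pi,\sim}}$ versus $\catname{scTrb_\pi}$. Any argument that hopes to ``transport the $\pi$-tribe structure'' along the zig-zag at the object level is circular in this context. The clean way out is the paper's purely homotopy-category-level argument, which sidesteps object-level transport entirely.
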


\begin{proof}
For notational simplicity, we only check that $R\mathcal{C}$ has exponentials, but the argument applies equally to any internal product of a fibration along a fibration.
Consider two objects $A$ and $B$ of $R\mathcal{C}$.
Let $a$ and $b$ be two objects of $\mathcal{C}$ representing $A$ and $B$, in the sense that there exist weak equivalences $L^H\mathcal{C}(-,a) \to A$ and $L^H\mathcal{C}(-,b) \to B$. 
We fix an equivalence of categories $$\alpha : \mathbf{Ho}(\mathcal{C}) \simeq \mathbf{Ho}(\mathcal{C}')$$
Note that $\mathbf{Ho}(\mathcal{C}')$ is cartesian closed; hence so is $\mathbf{Ho}(\mathcal{C})$ by equivalence.
Consider an object $c$ of $\mathcal{C}$ such that $\alpha c$ is isomorphic to the exponential $(\alpha a)^{\alpha b}$ computed in the $\pi$-tribe $\mathcal{C}'$. 
Consider the tribe $P\mathcal{C}$ whose objects are the fibrant objects of the injective model structure on $\mathbf{SSet}^{L^H\mathcal{C}^{op}}$, and whose morphisms are the maps in $\mathbf{SSet}^{L^H\mathcal{C}^{op}}$ between any two fibrant objects. The tribe structure of $P\mathcal{C}$ comes, as usual, from defining a fibration as one in the injective model structure. $P\mathcal{C}$ is moreover a $\pi$-tribe because $\mathbf{SSet}^{L^H\mathcal{C}^{op}}$ is a locally cartesian closed model category.
Consider the exponential $A^B$ computed in $P\mathcal{C}$. We claim that it is essentially represented by $c$.

Because $\mathcal{C}'$ is a $\pi$-tribe, $(\alpha a)^{\alpha b}$ is an exponential in the homotopy category $$\mathbf{Ho}(\mathcal{C}') \simeq \mathbf{Ho}(\mathcal{C}) \simeq \mathbf{Ho}(R\mathcal{C})$$
But the homotopy category $\mathbf{Ho}(R\mathcal{C})$ is a full subcategory of $\mathbf{Ho}(P\mathcal{C})$, so that the exponential $A^B$ in $P\mathcal{C}$ is isomorphic to $(\alpha a)^{\alpha b}$ (modulo the identification given by the previous equivalences of homotopy categories).

The same kind of argument allow us to reason about morphisms, thanks to Lemma 6.7 in \cite{ks2019internal}, so as to extend the proof to internal products of fibrations along fibrations. Explicitly, consider two fibrations $F :  E \to A$ and $P : A \to B$ in $R\mathcal{C}$. Applying \cite[Lemma 6.7]{ks2019internal}, we have commutative squares 
\[\begin{tikzcd}[ampersand replacement=\&]
	{L^H\mathcal{C}(-,a)} \&\& A \&\& {L^H\mathcal{C}(-,e)} \&\& E \\
	\\
	{L^H\mathcal{C}(-,b)} \&\& B \&\& {L^H\mathcal{C}(-,a)} \&\& A
	\arrow["{r_a}", from=1-1, to=1-3]
	\arrow["{L^H\mathcal{C}(-,f)}"', from=1-1, to=3-1]
	\arrow["P", two heads, from=1-3, to=3-3]
	\arrow["{r_e}", from=1-5, to=1-7]
	\arrow["{L^H\mathcal{C}(-,p)}"', from=1-5, to=3-5]
	\arrow["F", two heads, from=1-7, to=3-7]
	\arrow["{r_b}"', from=3-1, to=3-3]
	\arrow["{r_a}"', from=3-5, to=3-7]
\end{tikzcd}\]
with $r_e$, $r_a$ and $r_b$ weak equivalences, and where $f : e \to a$ and $p : a \to b$ are fibrations in $\mathcal{C}$. Since we have equivalences of homotopy categories $$\alpha_b : \mathbf{Ho}(\mathcal{C}_{/b}) \simeq \mathbf{Ho}(\mathcal{C}'_{/b'})$$ and $$\alpha_a : \mathbf{Ho}(\mathcal{C}_{/a}) \simeq \mathbf{Ho}(\mathcal{C}'_{/a'})$$ where $p' : a' \to b'$ is a fibration in $\mathcal{C'}$ equivalent to the fibration $a \to b$ (in that they yield isomorphic arrows in the homotopy category $\mathbf{Ho}(\mathcal{C}) \simeq \mathbf{Ho}(\mathcal{C}')$). Similarly, we have a fibration $f' : e' \to a'$ in $\mathcal{C'}$, equivalent to $e \to a$. Now, we can form the internal product $\Pi_{p'} f'$ in $\mathcal{C'}$ since it is a $\pi$-tribe, and there is a fibration $x \to b$ in $\mathcal{C}$ equivalent to $\Pi_{p'} f' \to b'$. The internal product $\Pi_P F \to B$ computed in $P\mathcal{C}$ is then essentially represented by $x \to b$, as we can see by arguing with the derived adjunction:

\[\begin{tikzcd}[ampersand replacement=\&]
	{\mathbf{Ho}(R\mathcal{C}_{/B})} \&\& {\mathbf{Ho}(\mathcal{C}_{/b})} \&\& {\mathbf{Ho}(\mathcal{C'}_{/b'})} \\
	\\
	{\mathbf{Ho}(R\mathcal{C}_{/A})} \&\& {\mathbf{Ho}(\mathcal{C}_{/a})} \&\& {\mathbf{Ho}(\mathcal{C'}_{/a'})}
	\arrow["\sim"{description}, from=1-1, to=1-3]
	\arrow["\sim"{description}, from=1-3, to=1-5]
	\arrow["{(p')^*}"', curve={height=6pt}, from=1-5, to=3-5]
	\arrow["\sim"{description}, from=3-1, to=3-3]
	\arrow["\sim"{description}, from=3-3, to=3-5]
	\arrow["{\Pi_{p'}}"', curve={height=6pt}, from=3-5, to=1-5]
\end{tikzcd}\]
Hence, $R\mathcal{C}$ admits an internal product of $F$ along $P$.
\end{proof}

\begin{remark}
 As in \cite[Lemma 6.2]{cherradi2026flat}, we can work with the category $\mathbf{SSet}^{\mathcal{C}^{op}}$ of simplicial presheaves on $\mathcal{C}$ with the adequate model structure, instead of $\mathbf{SSet}^{L^H\mathcal{C}^{op}}$. We will abuse notation, and still write $R\mathcal{C}$ for the tribe of essentially representable fibrant simplicial presheaves, as the two tribes are canonically equivalent. The proof of \cref{pirc} works equally well, so that it is moreover a $\pi$-tribe as soon as $\mathcal{C}$ is equivalent to a $\pi$-tribe.
\end{remark}

The following result bridges the gap between the "up-to-weak-equivalence" setting and the "up-to-isomorphism" one.

\begin{proposition}
\label{wtoi}
 The forgetful functor $\catname{scTrb_\pi^p} \to \catname{scTrb_{\pi,\sim}}$ is a DK-equivalence between fibration categories.
\end{proposition}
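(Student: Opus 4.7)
The approach is to verify Cisinski's criterion (\cref{cisinski_dkeq}) for the exact forgetful functor $U : \catname{scTrb_\pi^p} \to \catname{scTrb_{\pi,\sim}}$. AP1 is immediate since both fibration categories share the same class of weak equivalences (the DK-equivalences of underlying tribes), so the task reduces to proving that $\mathbf{Ho}(U)$ is an equivalence of categories.

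For essential surjectivity, given $\mathcal{T} \in \catname{scTrb_{\pi,\sim}}$, the frame construction provides the replacement. By \cref{eqpitribe} there is a $\pi$-tribe $\mathcal{T}^*$ equivalent to $\mathcal{T}$ in $\mathbf{Trb}$, and by \cref{piframe} combined with the fact (from the corollary preceding \cref{dkPtop}) that $\text{Fr}: \mathbf{Trb_\pi} \to \mathbf{scTrb_\pi}$ actually factors through $\catname{scTrb_\pi^p}$, the object $\text{Fr}(\mathcal{T}^*)$ lies in $\catname{scTrb_\pi^p}$. Applying $\text{Fr}$ to the zig-zag from $\mathcal{T}$ to $\mathcal{T}^*$ and using \cref{ev0_funct} produces a zig-zag of weak equivalences in $\mathbf{scTrb}$ connecting $\mathcal{T}$ to $\text{Fr}(\mathcal{T}^*)$; all intermediate tribes have locally cartesian closed underlying quasicategories and all weak equivalences induce locally cartesian closed $\infty$-functors, so the zig-zag lies in $\catname{scTrb_{\pi,\sim}}$.

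Fullness follows directly from \cref{sp}. Any morphism in $\mathbf{Ho}(\catname{scTrb_{\pi,\sim}})$ between objects of $\catname{scTrb_\pi^p}$ is represented by a span $\mathcal{T}_0 \xleftarrow{\sim} \mathcal{T} \to \mathcal{T}_1$, which \cref{sp} rigidifies to an equivalent span with apex in $\mathbf{scTrb_\pi}$ and both legs $\pi$-closed; \cref{dkPtop} then further promotes the apex into $\catname{scTrb_\pi^p}$, furnishing the required preimage in $\mathbf{Ho}(\catname{scTrb_\pi^p})$.

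The main obstacle is faithfulness. Given $\pi$-closed morphisms $F, F' : \mathcal{T}_0 \to \mathcal{T}_1$ in $\catname{scTrb_\pi^p}$ that become equal in $\mathbf{Ho}(\catname{scTrb_{\pi,\sim}})$, equality is witnessed by a right homotopy $H : \mathcal{T}_0 \to P\mathcal{T}_1$ in $\catname{scTrb_{\pi,\sim}}$ with $\pi_0 H = F$ and $\pi_1 H = F'$, which makes sense because $P\mathcal{T}_1 \in \catname{scTrb_\pi^p}$ by \cref{iotapip} and is therefore a path object in both fibration categories. The crucial point is that \cref{strf}, applied to $H$ viewed as a morphism between $\pi$-tribes in $\mathbf{scTrb_{\pi,\sim}}$, produces a strictly commuting factorization $H = H' \circ m$ where $e : \mathcal{T}' \to \mathcal{T}_0$ is a $\pi$-closed weak equivalence satisfying $e \circ m = \mathrm{id}$ (so $m$ itself is a weak equivalence by 2-out-of-3), $\mathcal{T}' \in \mathbf{scTrb_\pi}$, and $H' : \mathcal{T}' \to P\mathcal{T}_1$ is $\pi$-closed. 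After using \cref{dkPtop} to further replace $\mathcal{T}'$ by a weakly equivalent object of $\catname{scTrb_\pi^p}$, the morphism $H'$ defines a right homotopy in $\catname{scTrb_\pi^p}$ between the $\pi$-closed maps $\pi_0 H'$ and $\pi_1 H'$; composing with $m$ gives $F = \pi_0 H' \circ m$ and $F' = \pi_1 H' \circ m$, so $F$ and $F'$ already agree in $\mathbf{Ho}(\catname{scTrb_\pi^p})$. With essential surjectivity, fullness, and faithfulness assembled, Cisinski's theorem delivers the DK-equivalence.
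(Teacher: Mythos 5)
Your overall strategy (Cisinski's criterion; essential surjectivity via the frame construction; fullness via \cref{sp}; faithfulness via right homotopies and shared path objects) mirrors the paper's. The essential surjectivity and fullness parts are essentially correct, modulo a small point: to connect $\mathcal{T}$ to $\text{Fr}(\mathcal{T}^*)$ inside $\mathbf{scTrb}$ you need the $\tilde{\text{Fr}}$ span of \cref{frtilde}, since $\mathbf{ev}_0$ is only a morphism of abstract tribes.

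There is however a genuine gap in the faithfulness step. You apply \cref{strf} to the homotopy $H$ and obtain $e \colon \mathcal{T}' \to \mathcal{T}_0$, $H' \colon \mathcal{T}' \to P\mathcal{T}_1$, and $m \colon \mathcal{T}_0 \to \mathcal{T}'$ with $e \circ m = \mathrm{id}$ and $H' \circ m = H$. Then you conclude $[F] = [F']$ in $\mathbf{Ho}(\catname{scTrb_\pi^p})$ "by composing with $m$." But $m$ is \emph{not} $\pi$-closed — the proof of \cref{strf0} states explicitly that the universal map $m$ "needs not preserve exponentials" — so $m$ is not a morphism of $\catname{scTrb_\pi^p}$, and precomposition with $[m]$ is simply not available in $\mathbf{Ho}(\catname{scTrb_\pi^p})$. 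The relation $F = \pi_0 H' \circ m$ holds only at the level of $\catname{scTrb}$, and cannot be promoted to an identity $[F] = [\pi_0 H'] \circ [e]^{-1}$ in $\mathbf{Ho}(\catname{scTrb_\pi^p})$ without a further argument. Concretely, one would need to know that $\pi_0 H'$ and $F \circ e$ are linked by a $\pi$-closed homotopy, and unpacking the pullback defining $\mathcal{T}'$ shows this is not automatic: $\pi_0 H'$ reads off the first leg of the second span component $q$, whereas $F\circ e$ reads off the first leg of $H(a)$, and these are related by a span in $P\mathcal{T}_1$, not by an equality. The paper avoids this issue by never reducing to a single composed morphism: it keeps the two morphisms represented as spans, applies \cref{sp} to a connecting span, lifts the apex to $\catname{scTrb_\pi^p}$ via the already-established equivalence $\mathbf{Ho}(\catname{scTrb_\pi^p}) \simeq \mathbf{Ho}(\catname{scTrb_\pi})$, and then exploits that $P\mathcal{T}_0$, $P\mathcal{T}_1$ are simultaneously path objects in $\catname{scTrb_\pi^p}$ and $\catname{scTrb_{\pi,\sim}}$ to transfer the witnessing homotopies. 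You would need to replace your final step with something along those lines, or at least supply the missing $\pi$-closed comparison between $\pi_0 H'$ and $F \circ e$.
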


\begin{proof}
 Since the forgetful functor $\catname{scTrb_\pi^p} \to \catname{scTrb_{\pi,\sim}}$ is an exact functor between fibrations categories, Cisinski's theorem characterizing DK-equivalences applies (\cref{cisinski_dke}). Thus, it is enough to check that this functor satisfies the approximation properties. The weak equivalences are reflected because they consists of the DK-equivalences in both cases. To check the condition (AP2), we consider a map $F : \mathcal{T} \to \mathcal{S}$ in $\catname{scTrb_{\pi,\sim}}$ such that $\mathcal{S}$ lies in $\catname{scTrb_\pi^p}$. Using \cref{rlan_cpi}, we compute a rigid left Kan extension of the composite of  $F$ with $\mathcal{Q}(\mathcal{T}) \to \mathcal{T}$ along the Yoneda embedding $\mathcal{Q}(\mathcal{T}) \to \mathbf{R}(\mathcal{T})$, yielding a span representing the same morphisms in $\mathbf{Ho}(\catname{scTrb_{\pi,\sim}})$,
\[\begin{tikzcd}[ampersand replacement=\&]
	{\mathcal{Q}(\mathcal{T})} \&\& {\mathcal{T}} \&\& {\mathcal{S}} \\
	\\
	{\mathbf{R}(\mathcal{T})} \&\&\&\& {\mathcal{T}'}
	\arrow[from=1-1, to=1-3]
	\arrow["\sim"{description}, from=1-1, to=3-1]
	\arrow["F", from=1-3, to=1-5]
	\arrow[from=3-5, to=1-5]
	\arrow["\sim"{description}, from=3-5, to=3-1]
\end{tikzcd}\]
where $\mathcal{T}'$ is a $\pi$-tribe.

By definition of $\mathbf{Ho}(\catname{scTrb_{\pi,\sim}})$, both spans are connected by a zig-zag of weak equivalences, so there exists a tribe $\mathcal{T}''$ and a homotopy commutative diagram as below.
\[\begin{tikzcd}[ampersand replacement=\&]
	{\mathcal{Q}(\mathcal{T})} \&\& {\mathcal{S}} \\
	\& {\mathcal{T}''} \\
	{\mathbf{R}(\mathcal{T})} \&\& {\mathcal{T}'}
	\arrow[from=1-1, to=1-3]
	\arrow["\sim"{description}, from=1-1, to=3-1]
	\arrow["\sim"{description}, from=2-2, to=1-1]
	\arrow[from=2-2, to=1-3]
	\arrow["\sim"{description}, from=2-2, to=3-1]
	\arrow["\sim"{description}, from=2-2, to=3-3]
	\arrow[from=3-3, to=1-3]
	\arrow["\sim"{description}, from=3-3, to=3-1]
\end{tikzcd}\]
Hence, there exists a  tribe $\mathcal{T}'''$ and a homotopy $H : \mathcal{T}''' \to P\mathcal{S}$ as in the following diagram,
\[\begin{tikzcd}[ampersand replacement=\&]
	\&\& {\mathcal{T}} \&\& {\mathcal{S}} \\
	\&\&\&\&\&\& {P\mathcal{S}} \\
	{\mathcal{T}'''} \&\&\& {\mathcal{T}} \& {\mathcal{S}} \\
	\& {\mathcal{Q}(\mathcal{T})} \\
	\&\& {\mathbf{R}(\mathcal{T})} \&\&\& {\mathcal{T}_\pi} \\
	\& {\mathcal{T}''} \&\& {\mathcal{T}'}
	\arrow[from=1-3, to=1-5]
	\arrow[from=2-7, to=1-5]
	\arrow[from=2-7, to=3-5]
	\arrow[curve={height=-12pt}, from=3-1, to=1-3]
	\arrow["H", curve={height=-12pt}, from=3-1, to=2-7]
	\arrow[from=3-1, to=6-2]
	\arrow[from=3-4, to=3-5]
	\arrow[from=4-2, to=3-4]
	\arrow[from=4-2, to=5-3]
	\arrow[from=5-6, to=2-7]
	\arrow["\ulcorner"{anchor=center, pos=0.125, rotate=180}, draw=none, from=5-6, to=3-5]
	\arrow[from=5-6, to=6-4]
	\arrow[from=6-2, to=4-2]
	\arrow[from=6-2, to=6-4]
	\arrow[from=6-4, to=3-5]
	\arrow[from=6-4, to=5-3]
\end{tikzcd}\]
where $\mathcal{T}_\pi$ is obtained by pullback. Therefore, considering the mediating arrow $\mathcal{T}''' \to \mathcal{T}_\pi$, we have a commutative diagram
\[\begin{tikzcd}[ampersand replacement=\&]
	{\mathcal{T}} \&\& {\mathcal{S}} \\
	\&\& {P\mathcal{S}} \\
	{\mathcal{T}'''} \&\& {\mathcal{T}_\pi}
	\arrow[from=1-1, to=1-3]
	\arrow[from=2-3, to=1-3]
	\arrow["\sim"{description}, from=3-1, to=1-1]
	\arrow["H", curve={height=-12pt}, from=3-1, to=2-3]
	\arrow["\sim"{description}, dashed, from=3-1, to=3-3]
	\arrow[from=3-3, to=2-3]
\end{tikzcd}\]
where the composite $\mathcal{T}_\pi \to \mathcal{S}$ lies in $\mathbf{scTrb}_\pi$. We can now apply the semi-cubical frame construction to this diagram, finally yielding a map $F_\pi$ in $\mathbf{scTrb}_\pi^p$ and a diagram witnessing that the approximation property (AP2) holds.
\[\begin{tikzcd}[ampersand replacement=\&]
	{\mathcal{T}} \&\& {\mathcal{S}} \\
	\\
	{\tilde{\text{scFr}}  \mathcal{T}} \&\& {\tilde{\text{scFr}} \mathcal{S}} \\
	\\
	{\tilde{\text{scFr}} \mathcal{T}'''} \&\& {\tilde{\text{scFr}}  \mathcal{T}_\pi}
	\arrow["F", from=1-1, to=1-3]
	\arrow["\sim"{description}, from=3-1, to=1-1]
	\arrow[from=3-1, to=3-3]
	\arrow[from=3-3, to=1-3]
	\arrow["\sim"{description}, from=5-1, to=3-1]
	\arrow["\sim"{description}, from=5-1, to=5-3]
	\arrow["{F_\pi}"', curve={height=24pt}, from=5-3, to=1-3]
	\arrow[from=5-3, to=3-3]
\end{tikzcd}\]
\end{proof}

\subsection{Final step in the proof of the conjecture}

In this final subsection, we will be putting the pieces back together to reach our initial goal.

By definition, a DK-equivalence $\alpha : R \to S$ between relative categories is a functor inducing an equivalence of simplicial sets between the hom-spaces and such that $\mathbf{Ho}(\alpha) : \mathbf{Ho}(R) \to \mathbf{Ho}(S)$ is essentially surjective on objects. In the next proposition, unlike our strategy so far, we will make use of this definition and investigate a map between hom-spaces, where the latter are computed as in the hammock localization. In particular, the vertices of the hom-spaces between two objects $X$ and $Y$ of $R$ are represented by zig-zags
\[\begin{tikzcd}
	X & \bullet & \bullet & \bullet & Y
	\arrow[from=1-1, to=1-2]
	\arrow[from=1-3, to=1-2]
	\arrow["{...}"{description}, draw=none, from=1-3, to=1-4]
	\arrow[from=1-4, to=1-5]
\end{tikzcd}\]
where the arrows are morphisms in $R$, that are moreover weak equivalences when they go backward (from right to left).

\begin{proposition}
\label{wtoqc}
 The functor $\mathbf{Ho}_\infty : \catname{scTrb_{\pi,\sim}} \to \catname{QCat}_{lcc}$ is a DK-equivalence.
\end{proposition}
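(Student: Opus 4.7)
The plan is to establish the two defining properties of a DK-equivalence separately: essential surjectivity at the level of homotopy categories, and weak equivalence on mapping spaces. Well-definedness of the functor is immediate since, by \cref{eqpitribe}, every object of $\catname{scTrb_{\pi,\sim}}$ has a locally cartesian closed image under $\mathbf{Ho}_\infty$, and morphisms of $\catname{scTrb_{\pi,\sim}}$ are defined precisely so that their image is a locally cartesian closed $\infty$-functor.

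For essential surjectivity, I would start from a locally cartesian closed quasicategory $\mathcal{Q}$ and invoke Theorem 2.4 of \cite{cherradi2022interpreting} (as used already in \cref{eqpitribe}) to produce a $\pi$-tribe $\mathcal{T}_0$ with $\mathbf{Ho}_\infty(\mathcal{T}_0) \simeq \mathcal{Q}$. Such a $\pi$-tribe lies in $\mathbf{Trb}_{\pi,\sim}$, and applying the DK-equivalence $\catname{scTrb_{\pi,\sim}} \to \mathbf{Trb}_{\pi,\sim}$ of \cref{semipi} (concretely, by the semi-cubical frame construction), one obtains an object $\mathcal{T}$ of $\catname{scTrb_{\pi,\sim}}$ with the required equivalence.

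The main technical step is the equivalence on mapping spaces. I would factor the functor as the composite
\[ \catname{scTrb_{\pi,\sim}} \hookrightarrow \mathbf{scTrb} \xrightarrow{\mathbf{Ho}_\infty} \catname{QCat}_{lex}, \]
the second component being a DK-equivalence by \cite{ks2019internal} combined with \cref{semipi}. Fixing $\mathcal{T}, \mathcal{S} \in \catname{scTrb_{\pi,\sim}}$, this produces a weak equivalence on the ambient mapping spaces $\mathrm{Map}_{\mathbf{scTrb}}(\mathcal{T},\mathcal{S}) \xrightarrow{\sim} \mathrm{Map}_{\catname{QCat}_{lex}}(\mathbf{Ho}_\infty\mathcal{T}, \mathbf{Ho}_\infty\mathcal{S})$. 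It then remains to argue that the sub-mapping-spaces $\mathrm{Map}_{\catname{scTrb_{\pi,\sim}}}(\mathcal{T},\mathcal{S})$ and $\mathrm{Map}_{\catname{QCat}_{lcc}}(\mathbf{Ho}_\infty\mathcal{T}, \mathbf{Ho}_\infty\mathcal{S})$ correspond under this equivalence. The key observation is that both sub-mapping-spaces are unions of connected components of the ambient ones. On the quasicategorical side this is standard: being a locally cartesian closed $\infty$-functor is a property, so $\mathrm{Map}_{\catname{QCat}_{lcc}}$ consists of full components of $\mathrm{Map}_{\catname{QCat}_{lex}}$. On the tribal side, a vertex in $\mathrm{Map}_{\mathbf{scTrb}}(\mathcal{T},\mathcal{S})$ is represented by a zig-zag whose backward arrows are weak equivalences; any two zig-zags connected by a path in the hom-space represent the same $\infty$-morphism, and since weak equivalences of tribes automatically preserve the locally cartesian closed structure at the level of $\mathbf{Ho}_\infty$, membership in $\catname{scTrb_{\pi,\sim}}$ is constant on connected components. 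The selected components correspond on both sides precisely because they are picked out by the same condition, namely that the associated $\infty$-functor is locally cartesian closed.

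The main obstacle, and the part requiring care, is this last matching of components. In particular, one needs to check that every zig-zag representative of a locally cartesian closed $\infty$-functor can be refined to one whose forward legs are all $\pi$-closed (so genuinely define a morphism of $\catname{scTrb_{\pi,\sim}}$). This is where the span replacement argument of \cref{sp} and the rigidification lemma \cref{strf} are crucial: starting from an arbitrary zig-zag in $\mathbf{scTrb}$ representing a locally cartesian closed $\infty$-functor, one can use them to replace its forward legs by $\pi$-closed morphisms between $\pi$-tribes (or tribes equivalent to $\pi$-tribes), yielding a homotopic zig-zag that sits inside $\catname{scTrb_{\pi,\sim}}$. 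Combined with the previous paragraph, this yields the desired equivalence on mapping spaces and concludes the proof.
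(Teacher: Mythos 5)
Your first two steps — essential surjectivity via Theorem 2.4 of \cite{cherradi2022interpreting}, and the factorization $\catname{scTrb_{\pi,\sim}} \hookrightarrow \mathbf{scTrb} \xrightarrow{\mathbf{Ho}_\infty} \catname{QCat}_{lex}$ together with the observation that both sub-mapping-spaces are unions of connected components — match the paper's argument. The issue is in how you close the argument.

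The paper makes the component-matching precise very cleanly: it observes that the square of hom-spaces
\[
\begin{tikzcd}[ampersand replacement=\&]
	{Hom_{\catname{scTrb_{\pi,\sim}}}(X,Y)} \& {Hom_{\catname{QCat}_{lcc}}(\mathbf{Ho}_\infty X,\mathbf{Ho}_\infty Y)} \\
	{Hom_{\catname{scTrb}}(X,Y)} \& {Hom_{\catname{QCat}_{lex}}(\mathbf{Ho}_\infty X,\mathbf{Ho}_\infty Y)}
	\arrow[from=1-1, to=1-2]
	\arrow[two heads, from=1-1, to=2-1]
	\arrow[two heads, from=1-2, to=2-2]
	\arrow["\sim"', from=2-1, to=2-2]
\end{tikzcd}
\]
is a pullback (this is essentially automatic, because morphisms of $\catname{scTrb_{\pi,\sim}}$ are \emph{defined} by the condition that their image under $\mathbf{Ho}_\infty$ is lcc), the vertical maps are Kan fibrations (inclusions of unions of connected components), and the bottom map is a weak equivalence; right properness of the Quillen model structure then delivers the conclusion. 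You assert that "the selected components correspond on both sides precisely because they are picked out by the same condition," which is the right intuition, but you do not turn it into the pullback-plus-right-properness argument, and you leave the component matching as "the main obstacle."

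The last paragraph then tries to fill this gap in a way that is based on a misreading of the definitions. You say one must "check that every zig-zag representative of a locally cartesian closed $\infty$-functor can be refined to one whose forward legs are all $\pi$-closed (so genuinely define a morphism of $\catname{scTrb_{\pi,\sim}}$)." But morphisms of $\catname{scTrb_{\pi,\sim}}$ are not required to be $\pi$-closed; they are tribe morphisms $m$ such that $\mathbf{Ho}_\infty(m)$ is locally cartesian closed (that is the definition the pullback square exploits). Rigidifying to $\pi$-closed morphisms via \cref{strf} and \cref{sp} is the content of \cref{wtoi} (the DK-equivalence $\catname{scTrb_\pi^p} \to \catname{scTrb_{\pi,\sim}}$), not of \cref{wtoqc}. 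Invoking them here would be a detour that proves more than needed while leaving the actual step (weak equivalence of sub-hom-spaces) hanging on an unproved assertion. Drop the last paragraph, make the pullback explicit, and invoke right properness to finish.
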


\begin{proof}
 Given $X$ and $Y$ two objects of $\catname{scTrb_{\pi,\sim}}$, we first show that $\mathbf{Ho}_\infty$ defines a weak equivalence of simplicial set in the sense of the Quillen model structure between the hom-spaces: $$Hom_{\catname{scTrb_{\pi,\sim}}}(X,Y) \to Hom_{\catname{QCat}_{lcc}}(\mathbf{Ho}_\infty(X),\mathbf{Ho}_\infty(Y))$$
 We start by observing that the hom-space $Hom(X,Y)$ between $X$ and $Y$ in $\catname{scTrb_{\pi,\sim}}$ is a subspace of the hom-space $Hom'(X,Y)$, computed in $\catname{scTrb}$, whose vertices are the zig-zags of functors $F$ satisfying the property that $\mathbf{Ho}_\infty(F)$ is locally cartesian closed. Moreover, this subspace is given as a disjoint union of connected components. These connected components are characterized by the fact that their vertices are zig-zags involving only arrows in $\catname{scTrb_{\pi,\sim}}$. Indeed, consider a vertex in $Hom'(X,Y)$ which comes from a zig-zag in $\catname{scTrb_{\pi,\sim}}$, as in the top of the diagram below, and consider any other vertex in $Hom'(X,Y)$ which is connected to the first, as in the bottom of the diagram:

\[\begin{tikzcd}
	& \bullet & \bullet & \bullet \\
	X &&&& Y \\
	& \bullet & \bullet & \bullet
	\arrow[from=2-1, to=1-2]
	\arrow[from=1-3, to=1-2]
	\arrow["{...}"{description}, draw=none, from=1-3, to=1-4]
	\arrow[from=1-4, to=2-5]
	\arrow[from=2-1, to=3-2]
	\arrow[from=3-3, to=3-2]
	\arrow["{...}"{description}, draw=none, from=3-3, to=3-4]
	\arrow[from=3-4, to=2-5]
	\arrow["\sim"{description}, from=1-2, to=3-2]
	\arrow["\sim"{description}, from=1-3, to=3-3]
	\arrow["\sim"{description}, from=1-4, to=3-4]
\end{tikzcd}\]
In this situation, each arrow in the bottom zig-zag is the bottom arrow of a commutative square

\[\begin{tikzcd}
	\bullet && \bullet \\
	\\
	\bullet && \bullet
	\arrow["f", from=1-1, to=1-3]
	\arrow["g"', from=3-1, to=3-3]
	\arrow["\sim"{description}, from=1-1, to=3-1]
	\arrow["\sim"{description}, from=1-3, to=3-3]
\end{tikzcd}\]
where the top arrow $f$ is known to lie in $\catname{scTrb_{\pi,\sim}}$. Therefore, $g$ is also such that $\mathbf{Ho}_\infty(g)$ is a locally cartesian closed $\infty$-functor. This proves that, if a vertex in a given connected component $\mathbf{C} \subset Hom'(X,Y)$ is in the image of $Hom(X,Y) \to Hom'(X,Y)$, then the connected component as a whole factors through the inclusion $Hom(X,Y) \to Hom'(X,Y)$. The same argument shows that, for $A$ and $B$ two objects of $\catname{QCat}_{lcc}$, the map $$Hom_{\catname{QCat}_{lcc}}(A,B) \to Hom_{\catname{QCat}_{lex}}(A,B)$$ is an inclusion of subspaces that are connected components. Note that, as such, this inclusion is trivially a Kan fibration.

 Now, the functor 
 $$\mathbf{Ho}_\infty: \catname{Trb} \to \catname{QCat}_{lex}$$
is a DK-equivalence (Theorem 9.10 in \cite{ks2019internal}),  and so is $\catname{scTrb} \to \catname{Trb}$, and thus the composite  $\mathbf{Ho}_\infty: \catname{scTrb} \to \catname{QCat}_{lex}$. Therefore, the induced morphisms between hom-spaces $$Hom_{\catname{scTrb}}(X,Y) \to Hom_{\catname{QCat}_{lex}}(\mathbf{Ho}_\infty(X),\mathbf{Ho}_\infty(Y))$$ is a weak equivalence of spaces. We are in the situation given by the following commutative square, which is actually a pullback:
 
\[\begin{tikzcd}
	{Hom_{\catname{scTrb_{\pi,\sim}}}(X,Y)} && {Hom_{\catname{QCat}_{lcc}}(\mathbf{Ho}_\infty(X),\mathbf{Ho}_\infty(Y))} \\
	\\
	{Hom_{\catname{scTrb}}(X,Y)} && {Hom_{\catname{QCat}_{lex}}(\mathbf{Ho}_\infty(X),\mathbf{Ho}_\infty(Y))}
	\arrow[from=1-1, to=1-3]
	\arrow["\sim"{description}, from=3-1, to=3-3]
	\arrow[two heads, from=1-1, to=3-1]
	\arrow[two heads, from=1-3, to=3-3]
	\arrow["\lrcorner"{anchor=center, pos=0.125}, draw=none, from=1-1, to=3-3]
\end{tikzcd}\]
 It follows that the top map is a weak equivalence of spaces, by right properness of the Quillen model structure on simplicial sets.
 
 We are, therefore, left to prove that the induced functor $$\mathbf{Ho}_\infty : \catname{scTrb_{\pi,\sim}} \to \catname{QCat}_{lcc}$$ between homotopy categories is essentially surjective on objects. But this has been established in \cite[Theorem 2.4]{cherradi2022interpreting}, so we are done.
 \end{proof}

 \begin{definition}
  We define $\catname{CompCat}_{\Sigma, \Pi_\text{ext}, \text{Id}}$ to be the relative category whose objects are the categorical models $\mathbf{C}$ of type theory defined as a variation of Definition 9.3 of \cite{ks2019internal}: we require the identity types to be strictly (and not just weakly) stable under substitutions, and for $\mathbf{C}$ to moreover admit $\Pi$-types with functional extensionality such that $\mathbf{ext} : \Pi_f P X \to P\Pi_f X$ maps the reflexive homotopy to the reflexive path, satisfying the $\Pi\text{-}\eta$ rule, and that are strictly stable under substitutions. The morphisms of $\catname{CompCat}_{\Sigma, \Pi_\text{ext}, \text{Id}}$ are the morphisms between comprehension categories that preserve the structure strictly.
  
 \end{definition}
 
 Given a comprehension category in $\catname{CompCat}_{\Sigma, \Pi_\text{ext}, \text{Id}}$, its base category has the structure of a tribe, where the fibrations are finite composites of context projections $\Gamma . A \to \Gamma$. When the comprehension category supports $\Pi$-types as in the previous definition, the resulting tribe is a $\pi$-tribe as seen in \cref{compcat_pitribe}. We now consider the functor $T_\pi : \catname{CompCat}_{\Sigma, \Pi_{\text{ext}}, \text{Id}} \to \catname{Trb_\pi}$.

 \begin{proposition}
 \label{dkcomp_trbpi}
  The functor $\catname{CompCat}_{\Sigma, \Pi_{\text{ext}}, \text{Id}} \to \catname{Trb_\pi}$ is a DK-equivalence.
 \end{proposition}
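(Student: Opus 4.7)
The plan is to deduce this result from Theorem~9.10 of \cite{ks2019internal}, which establishes that $T : \catname{CompCat}_{\Sigma, \text{Id}} \to \catname{Trb}$ is a DK-equivalence, by restricting to the $\pi$-versions on both sides. The essential input is \cref{compcat_pitribe}: it shows that existence of extensional $\Pi$-types on $\mathbf{C}$ corresponds to the $\pi$-tribe structure on $T\mathbf{C}$, and, dually, that $C$ restricts to a functor $C_\pi : \catname{Trb_\pi} \to \catname{CompCat}_{\Sigma, \Pi_{\text{ext}}, \text{Id}}$. By construction, the $\Pi$-types of $\mathbf{C}$ are literally the internal products of the fibrations in $T\mathbf{C}$, so strict preservation of $\Pi$-types yields a $\pi$-closed morphism of tribes, and conversely.

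First I would check essential surjectivity of $\mathbf{Ho}(T_\pi)$: given a $\pi$-tribe $\mathcal{T}$, the object $C\mathcal{T}$ lies in $\catname{CompCat}_{\Sigma, \Pi_{\text{ext}}, \text{Id}}$ by \cref{compcat_pitribe}, and the natural weak equivalence $TC\mathcal{T} \to \mathcal{T}$ from \cite{ks2019internal} is automatically a morphism in $\catname{Trb_\pi}$ since the internal products on both sides agree with the corresponding $\Pi$-types. Next, for the equivalence on hom-spaces, I would follow the pattern of \cref{wtoqc}: for $\mathbf{C}, \mathbf{D}$ in $\catname{CompCat}_{\Sigma, \Pi_{\text{ext}}, \text{Id}}$, consider the commutative square
\[\begin{tikzcd}[ampersand replacement=\&]
Hom_{\catname{CompCat}_{\Sigma, \Pi_{\text{ext}}, \text{Id}}}(\mathbf{C},\mathbf{D}) \&\& Hom_{\catname{Trb_\pi}}(T\mathbf{C}, T\mathbf{D}) \\
\\
Hom_{\catname{CompCat}_{\Sigma, \text{Id}}}(\mathbf{C},\mathbf{D}) \&\& Hom_{\catname{Trb}}(T\mathbf{C}, T\mathbf{D})
\arrow[from=1-1, to=1-3]
\arrow[two heads, from=1-1, to=3-1]
\arrow[two heads, from=1-3, to=3-3]
\arrow["\sim"{description}, from=3-1, to=3-3]
\arrow["\lrcorner"{anchor=center, pos=0.125}, draw=none, from=1-1, to=3-3]
\end{tikzcd}\]
whose bottom arrow is a weak equivalence by \cite{ks2019internal}. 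The goal is to show that the vertical maps are inclusions of unions of connected components (hence Kan fibrations) and that the square is a pullback, whence the top arrow is a weak equivalence by right properness of the Quillen model structure on simplicial sets.

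The main obstacle will be identifying the hom-spaces in the $\pi$-versions with appropriate unions of connected components in the underlying relative categories. On the tribe side, this is handled as in \cref{wtoqc}: $\pi$-closedness is equivalent to $\mathbf{Ho}_\infty$ being locally cartesian closed, which is stable under the zig-zag equivalence relation defining the hom-space, so any zig-zag connecting a $\pi$-closed morphism to another morphism between $\pi$-tribes consists entirely of $\pi$-closed morphisms. On the comp cat side, the strictness of $\Pi$-preservation for morphisms in $\catname{CompCat}_{\Sigma, \Pi_{\text{ext}}, \text{Id}}$ is rigid, but the universal property of $\Pi$-types ensures that any morphism that preserves them up to canonical isomorphism is, up to a zig-zag of weak equivalences, related to a strictly preserving one; this is the same kind of coherence/strictification argument that underpins the non-$\pi$ DK-equivalence of \cite{ks2019internal}, and it is what enables the identification of the top hom-spaces as collections of connected components in the bottom ones.
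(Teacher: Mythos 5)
Your hom-space pullback square hinges on the claim, at the end, that ``$\pi$-closedness is equivalent to $\mathbf{Ho}_\infty$ being locally cartesian closed,'' so that $Hom_{\catname{Trb_\pi}}(T\mathbf{C}, T\mathbf{D}) \to Hom_{\catname{Trb}}(T\mathbf{C}, T\mathbf{D})$ is an inclusion of connected components. This is false and is precisely the point the paper is built around: $\pi$-closedness means preservation of internal products \emph{up to isomorphism}, whereas $\mathbf{Ho}_\infty$ being lcc is the weaker, homotopy-invariant condition of preservation \emph{up to weak equivalence}. These two conditions define the different categories $\catname{Trb_\pi}$ and $\catname{Trb_{\pi,\sim}}$, and bridging the gap between them is the entire content of the rigidification machinery in Sections~3--5 (Lemmas~\ref{strf}, \ref{sp} and Proposition~\ref{wtoi}). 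The argument in \cref{wtoqc} applies to $\catname{scTrb_{\pi,\sim}}$ precisely because its morphism condition is invariant under the zig-zag relation; the same argument would fail for $\catname{Trb_\pi}$, since a zig-zag in $\catname{Trb}$ connecting a $\pi$-closed morphism to some other morphism between $\pi$-tribes need not stay inside $\catname{Trb_\pi}$ (the intermediate tribes in a hammock need not even be $\pi$-tribes, and the morphisms need not be $\pi$-closed). A similar issue affects your hand-waved strictification argument on the comprehension-category side. Also note you cite Theorem~9.10 of \cite{ks2019internal} for the statement that $T : \catname{CompCat}_{\Sigma,\text{Id}} \to \catname{Trb}$ is a DK-equivalence; in the paper's own usage Theorem~9.10 is about $\catname{Trb} \to \catname{QCat}_{lex}$, and the relevant statement is Theorem~9.9.

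The paper's actual proof avoids any hom-space comparison entirely. It follows the pattern of Theorem~9.9 of \cite{ks2019internal}: for a $\pi$-tribe $\mathcal{T}$, the comprehension category $\mathcal{T}^{\rightarrow_\text{fib}} \to \mathcal{T}$ is strictified via the left splitting functor (local universes model) of \cite{lw2015local}, and since $\mathcal{T}$ satisfies condition~(LF), their Lemmas~3.4.3.2 and~3.4.2.4 supply strictly stable identity types and dependent products. This yields a functor $C_\pi: \catname{Trb_\pi} \to \catname{CompCat}_{\Sigma, \Pi_\text{ext}, \text{Id}}$ which is directly checked to be an inverse DK-equivalence, with $T_\pi \circ C_\pi$ even equal to the identity. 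The point is that this strictification works at the level of objects and is genuinely simpler than the hom-space analysis, since the tension between strict and weak preservation of $\Pi$-structure does not arise in the comparison $\catname{CompCat}_{\Sigma, \Pi_\text{ext}, \text{Id}} \leftrightarrow \catname{Trb_\pi}$ (both sides use strict preservation); it only arises when passing to quasicategories.
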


 \begin{proof}

    We use the same argument as in the proof of Theorem 9.9 in \cite{ks2019internal}. Explicitly, given a $\pi$-tribe $\mathcal{T}$ in $\catname{Trb_\pi}$, the canonical comprehension category $\mathcal{T}^{\rightarrow_\text{fib}} \to \mathcal{T}$ can be strictified by the left splitting functor in Definition 3.1.1 of \cite{lw2015local} as to yield an object $\mathcal{T}_!$ of $\catname{CompCat}_{\Sigma, \Pi_\text{ext}, \text{Id}}$. Indeed, as a $\pi$-tribe, $\mathcal{T}$ satisfies the condition (LF) (Definition 3.1.3 in \cite{lw2015local}), so we may apply Lemma 3.4.3.2 (resp. Lemma 3.4.2.4) of the same paper to deduce that $\mathcal{T}_!$ has strictly stable identity types (resp. dependent products).
    
    The functor
    $$C_\pi: \catname{CompCat}_{\Sigma, \Pi_\text{ext}, \text{Id}} \to \catname{Trb_\pi}$$ 
    defined by this construction is an inverse DK-equivalence to $T_\pi: \catname{CompCat}_{\Sigma, \Pi_\text{ext}, \text{Id}} \to \catname{Trb_\pi}$. Indeed, the composite $T_\pi \circ C_\pi : \catname{Trb_\pi} \to \catname{Trb_\pi}$ is the identity functor, and there is a natural weak equivalence $id_{\catname{CompCat}_{\Sigma, \Pi_\text{ext}, \text{Id}}} \to C_\pi \circ T_\pi$ by definition of category models of type theory (the objects of $\catname{CompCat}_{\Sigma, \Pi_\text{ext}, \text{Id}}$) whose component at $(C,T,\chi: T \to C^\rightarrow)$ is the weak equivalence $T \to C^{\rightarrow_\text{fib}} \to (C^{\rightarrow_\text{fib}})_!$.
 \end{proof}
 
 Putting everything together, we can finally conclude:
 
 \begin{theorem}
  The functor $\mathbf{Ho}_\infty : \catname{CompCat}_{\Sigma, \Pi_\text{ext}, \text{Id}} \to \catname{QCat}_{lcc}$ is a DK-equivalence.
  
 \end{theorem}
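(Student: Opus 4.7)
The plan is to combine the chain of DK-equivalences already assembled in the preceding sections via the 2-out-of-3 property. First, I would decompose the functor as
\[
\catname{CompCat}_{\Sigma, \Pi_\text{ext}, \text{Id}} \xrightarrow{T_\pi} \catname{Trb_\pi} \xrightarrow{\mathbf{Ho}_\infty} \catname{QCat}_{lcc},
\]
and note that the first arrow is a DK-equivalence by \cref{dkcomp_trbpi}. It then suffices to show that the second arrow is a DK-equivalence.

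For the second arrow, I would exhibit it as one edge in the commutative diagram
\[
\catname{Trb_\pi} \xleftarrow{\sim} \catname{scTrb_\pi} \xleftarrow{\sim} \catname{scTrb_\pi^p} \xrightarrow{\sim} \catname{scTrb_{\pi,\sim}} \xrightarrow{\mathbf{Ho}_\infty} \catname{QCat}_{lcc},
\]
where the two leftmost arrows are DK-equivalences by \cref{dkPtop} (equivalently by \cref{semipi} combined with the observation that the cubical frame functor lands in $\catname{scTrb_\pi^p}$), the middle arrow $\catname{scTrb_\pi^p} \to \catname{scTrb_{\pi,\sim}}$ is a DK-equivalence by \cref{wtoi}, and the rightmost arrow is a DK-equivalence by \cref{wtoqc}. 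The commutativity of this diagram is clear from the definitions: each intermediate category is either a subcategory or a direct construction from the others, and all the comparison functors are given either by the forgetful functor or by applying $\mathbf{Ho}_\infty$ to the underlying tribe. Applying the 2-out-of-3 property of DK-equivalences to this zig-zag yields that $\mathbf{Ho}_\infty : \catname{Trb_\pi} \to \catname{QCat}_{lcc}$ is itself a DK-equivalence.

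Composing with the DK-equivalence $T_\pi$ from \cref{dkcomp_trbpi} and again invoking 2-out-of-3 (or closure of DK-equivalences under composition) then establishes that $\mathbf{Ho}_\infty : \catname{CompCat}_{\Sigma, \Pi_\text{ext}, \text{Id}} \to \catname{QCat}_{lcc}$ is a DK-equivalence, which is exactly the content of \cref{ilcj}. The genuine obstacles have all been addressed in the prior sections: the rigidification procedure of \cref{strf} underlying \cref{sp}, the construction of rigid left Kan extensions in \cref{rlan_c}, and the fibration category structures on $\catname{scTrb_\pi^p}$ and $\catname{scTrb_{\pi,\sim}}$. At this stage the only remaining task is to verify that the relevant diagrams of functors commute strictly, which is immediate from the definitions.
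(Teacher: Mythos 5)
Your proposal is correct and takes essentially the same approach as the paper: both assemble the chain of DK-equivalences from \cref{dkcomp_trbpi}, \cref{semipi}, \cref{dkPtop}, \cref{wtoi}, and \cref{wtoqc} in a commutative diagram and apply the 2-out-of-3 property. The only (cosmetic) difference is routing: you conclude directly that $\mathbf{Ho}_\infty : \catname{Trb_\pi} \to \catname{QCat}_{lcc}$ is a DK-equivalence from the zig-zag through $\catname{scTrb_\pi^p}$ and then compose with $T_\pi$, whereas the paper first extracts that $\catname{Trb_\pi} \to \catname{Trb_{\pi,\sim}}$ is a DK-equivalence and then presents the final functor as a composite of three DK-equivalences through $\catname{Trb_{\pi,\sim}}$.
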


 \begin{proof}
  We have a commutative diagram
\[\begin{tikzcd}[ampersand replacement=\&]
	\&\&\& {\catname{scTrb_\pi}} \&\& {\catname{scTrb_\pi^p}} \\
	\\
	{\catname{CompCat}_{\Sigma, \Pi_\text{ext}, \text{Id}}} \&\&\& {\catname{Trb_\pi}} \\
	\\
	{\catname{QCat}_{lcc}} \&\&\& {\catname{Trb_{\pi,\sim}}} \&\& {\catname{scTrb_{\pi,\sim}}}
	\arrow["{\text{\cref{semipi}}}"{description}, from=1-4, to=3-4]
	\arrow["{\text{\cref{dkPtop}}}"{description}, from=1-6, to=1-4]
	\arrow["{\text{\cref{wtoi}}}"{description}, from=1-6, to=5-6]
	\arrow["{\text{\cref{dkcomp_trbpi}}}"{description}, from=3-1, to=3-4]
	\arrow[from=3-1, to=5-1]
	\arrow[from=3-4, to=5-4]
	\arrow["{\text{\cref{wtoqc}}}"{description}, from=5-4, to=5-1]
	\arrow["{\text{\cref{semipi}}}"{description}, from=5-6, to=5-4]
\end{tikzcd}\]
where the indicated arrows have already been shown to be DK-equivalences in the labeled propositions.
By the 2-out-of-3 property, we obtain that the functor $\catname{Trb_\pi}  \to \catname{Trb_{\pi,\sim}}$ is also a DK-equivalence. Finally, we conclude that $$\catname{CompCat}_{\Sigma, \Pi_\text{ext}, \text{Id}} \to \catname{QCat}_{lcc}$$ is a DK-equivalence as the composite of three DK-equivalences.
\end{proof}
\newpage

\begingroup
\setlength{\emergencystretch}{.5em}
\RaggedRight
\printbibliography
\endgroup
\end{document}